\def\R{\RW}
\def\N{\mathbb{N}}  
\def\oNum{n}    
\def\J{\mathcal{L}} 
\def\oSen{p}    
\def\M{\mathfrak{M}}    
\def\eVal{v}    
\def\R{\text{R}}    
\def\cRel{\mathbf{R}}   
\def\cRelsub{\mathbf{Q}}    
\def\fDem{d}    
\def\cPos{\mathsf{W}}   
\def\oPos{\mathsf{w}}  
\def\rAvail{\mathsf{Q}} 
\def\RModal{\R_{\mathsf{w}}} 
\def\cRModal{\{ \RModal \}_{\oPos \in \cPos}}   
\def\A{A} 
\def\B{B} 
\def\C{C} 
\def\Var{\mathsf{Var}}  
\def\For{\mathsf{For}}  
\def\CPL{\textbf{CPL}}  
\def\BCL{\textbf{BCL}}  
\def\BCLCUN{\textbf{BCL}^{\neg}}   
\def\MBCL{\textbf{MBCL}}    
\def\MBCLCUN{\textbf{MBCL}^{\neg}} 
\def\lGen{\J_{\textsf{Gen}}}    
\def\fGen{\For_{\textsf{Gen}}}  
\def\acGen{Ax_{\textsf{Gen}}}   
\def\cGenMax{\mathbf{Max}_{\textsf{Gen}}}   
\def\fCPL{\For_{\textsf{CPL}}}
\def\lBCL{\J_{\textsf{BCL}}}    
\def\fBCL{\For_{\textsf{BCL}}}  
\def\acBCL{Ax_{\textsf{BCL}}}   
\def\acBCLCUN{Ax_{\textsf{BCL}}^{c}}    
\def\acBCLGCUN{Ax_{\textsf{BCL}}^{\textsf{gc}}} 
\def\cBCLGCUNMax{\mathbf{Max}_{\acBCLGCUN}} 
\def\cModBCL{\mathbf{M}}    
\def\cRelBCL{\mathbf{BC}}    
\def\cRelBCLCUN{\cRelBCL^{c}}   
\def\cRelBCLGCUN{\cRelBCL^{gc}} 
\def\lMBCL{\J_{\textsf{MBCL}}}  
\def\fMBCL{\For_{\textsf{MBCL}}}    
\def\acMBCL{Ax_{\textsf{MBCL}}} 
\def\acMBCLGCUN{Ax_{\textsf{MBCL}}^{\textsf{gc}}} 
\def\acMBCLCUNDR{Ax_{\textsf{MBCL}}^{\textsf{DemR}}}    
\def\acMBCLCuNDL{Ax_{\textsf{MBCL}}^{\textsf{DemL}}}    
\def\acMBCLCUNDE{Ax_{\textsf{MBCL}}^{\textsf{DemE}}} 
\def\cMBCLGCUNMax{\mathbf{Max}_{\acMBCLGCUN}}   
\def\cMBCLCUNDLMax{\mathbf{Max}_{\acMBCLCuNDL}} 
\def\cModMBCL{\mathbf{M}} 
\def\cRelMBCL{\mathbf{F}_{B}} 
\def\cRelMBCLCUN{\mathbf{F}_{B^{c}}} 
\def\cRelMBCLGCUN{\mathbf{F}_{B^{gc}}}   
\def\cRelMBCLDEML{\mathbf{F}_{B}^{\textsf{DemL}}}   
\def\aAi{(\mathsf{A1})}                                 
\def\aAii{(\mathsf{A2})}                                
\def\aBi{(\mathsf{B1})}                                 
\def\aBii{(\mathsf{B2})}                                
\def\aImp{(\mathsf{Imp})}                               
\def\aCPL{(\mathsf{CPL})}                               
\def\acDS{(\mathsf{DS}_{\supset})}                      
\def\aDS{(\mathsf{MP}_{\rightarrow})}                   
\def\anec{(\mathsf{Nec})}                               
\def\aCUNi{(\mathsf{C1})}                                 
\def\aCUNii{(\mathsf{C2})}                                
\def\aGCUN{(\mathsf{GC1})}                             
\def\aGCUNii{(\mathsf{GC2})}   
\def\aDual{(\mathsf{Dual})}
\def\aKm{(\mathsf{K}^{\supset})}
\def\aK{(\mathsf{K})}
\def\aCUNDR{(\mathsf{CUDR})}
\def\aCUNDL{(\mathsf{CUDL})}
\def\aCUNDE{(\mathsf{CUDE})}
\def\aDOne{(\mathsf{D1})}
\def\aDTwo{(\mathsf{D2})}
\def\aT{(\mathsf{T})}
\def\aB{(\mathsf{B})}
\def\aD{(\mathsf{D})}
\def\aFour{(\mathsf{4})}
\def\aFive{(\mathsf{5})}
\def\scAi{(\mathsf{a1})}                                
\def\scAii{(\mathsf{a2})}                               
\def\scBo{(\mathsf{b0})}                                
\def\scBi{(\mathsf{b1})}                                
\def\scBii{(\mathsf{b2})}                               
\def\scCUN{(\mathsf{cun})}                              
\def\scGCUN{(\mathsf{gcun})}                            
\def\scCUNk{(\mathsf{cun}^{klmn})}      
\def\scDemR{(\mathsf{DemR})}                            
\def\scDemL{(\mathsf{DemL})}                            
\def\scDemE{(\mathsf{DemE})}                             
\def\scDOne{(\mathsf{d1})}
\def\scDTwo{(\mathsf{d2})}
\def\scKOne{(\mathsf{k1})}
\def\scKTwo{(\mathsf{k2})}
\def\scT{(\mathsf{t})}
\def\scB{(\mathsf{b})}
\def\scD{(\mathsf{d})}
\def\scFour{(\mathsf{iv})}
\def\scFive{(\mathsf{v})}
\DeclareRobustCommand\proves{\mathrel{|}\joinrel\mkern-.5mu\mathrel{-}}
\renewcommand{\vdash}{\proves}
\newcommand{\labeltext}[2]{%
	\@bsphack
	\csname phantomsection\endcsname 
	\def\@currentlabel{#1}{\label{#2}}%
	\@esphack
}
\newtheorem{tw}{Theorem}[section]
\newtheorem{lm}[tw]{Lemma}
\newtheorem{wn}[tw]{Corollary}
\newtheorem{fa}[tw]{Fact}
\newtheorem{df}[tw]{Definition}
\theoremstyle{definition}
\def\ER{\textbf{R}}
\def\ES{\textbf{S}}
\begin{document}
	
	\setcounter{page}{1}
	
	\fourAuthorsTitle{T.\ts Jarmu\.zek}{J. \ts Malinowski}{A.\ts Parol}{ N.\ts Zamperlin}{Axiomatization of Boolean Connexive Logics with syncategorematic negation and modalities}

\noindent\textbf{Abstract}: In the article we investigate three classes of extended Boolean Connexive Logics. Two of them are extensions of Modal and non-Modal Boolean Connexive Logics with a property of closure under an arbitrary number of negations. The remaining one is an extension of Modal Boolean Connexive Logic with a property of closure under the function of demodalization. In our work we provide a formal presentation of mentioned properties and axiom schemata that allow us to incorporate them into Hilbert-style calculi. The presented axiomatic systems are provided with proofs of soundness, completeness, and decidability. The properties of closure under negation and demodalization are motivated by the syncategorematic view on the connectives of negation and modalities, which is discussed in the paper.

\vspace{0.2cm}

\noindent\textit{Keywords}: axiomatization, Boolean Connexive Logic, demodalization,  Modal Boolean Connexive Logic, multiple negations, relating semantics, syncategorematic connectives.

\section*{Introduction}
\label{sec:1}

In our paper, we present axiomatizations of three classes of Boolean Connexive Logics. Starting from non-Modal Boolean Connexive Logics  (in short: $\BCL$) and Modal Boolean Connexive Logics (in short: $\MBCL$), we introduce: (i) $\BCL$ closed under generalized negation, (ii) $\MBCL$ closed under generalized negation, and (iii) $\MBCL$ closed under the function of demodalization. These results are also a contribution to the fast growing area of relating logics and its applications. 

Boolean Connexive Logics were introduced in two main papers of Jarmu\.zek and Malinowski \cite{JarmuzekMalinowski2019}, \cite{JarmuzekMalinowski2019a}, and then accepted as one of the solutions to the problem of connexivity \cite{HWOnline}. Mentioned systems were constructed as an application  of relating semantics to the subject of connexive logics. This area of research resulted in more papers such as \cite{MalinowskiRelating}, \cite{MalinowskiBarber}, where Lewis Carroll's \textit{Barbershop Paradox} was analyzed by the means of $\BCL$, or \cite{KlonEG} where in the context of $\BCL$ connexivity and content-relationship were studied. In \cite{Klonowski2}, for some classes of Boolean connexive logics adequate axiomatizations were provided. In the current paper we will incorporate these recent results.

In the first section we will introduce the philosophical motivation behind Boolean Connexive Logics: \textit{minimal change strategy}. Then, we will provide a short description of the syntax and semantics for $\BCL$ and the axioms proposed in \cite{Klonowski2}. Analogously, we will introduce $\MBCL$.

In the second section we deal with the issue of  syncategorematic connectives, particularly negation and modalities. Based on that, we will define  the properties on being closed under multiple negations and under the function of demodalization. This will serve us to define extensions of $\BCL$ and $\MBCL$.

The third section is centered around the topic of closure under multiple negations. The property of being closed under negation that appeared already in the previous works, could be generalized to any number of negations. Hence we will provide a formal description of the mentioned property and introduce axiom schemata capable of expressing that property. Using these axioms we will construct the Hilbert-style systems of $\BCL$ and $\MBCL$ closed under particular instances of multiple negations. Finally, soundness and completeness will be proved.  

The fourth section, devoted to the studies of closure under function of demodalization, will have the corresponding structure. With the established  idea behind the function of demodalization and formal definition of the function, we will construct the axiom system and prove its soundness and completeness in the same way as it was done in the previous section. At last, decidability for all the introduced calculi will be proved.

\section{An introduction to Boolean Connexive Logics}
\label{sec:2}

$\BCL$ appeared for the first time in the work of Malinowski and Jarmu\.zek (see \cite{JarmuzekMalinowski2019}) as an example of application of relating semantics to connexive logics. Philosophically, its creation was laid upon the idea of \textit{minimal change strategy}, which is a similar idea to Occam's Razor. In short, the application of this idea is that instead of  getting rid of some classical laws, or changing the interpretation of more than one logical connective, it is better to change only the meaning of non-Boolean connectives. According to this approach, a common semantic fundamentals for both $\CPL$ (Classical Propositional Logic) and connexive logic is provided: relating semantics. This strategy allows us to preserve all $\CPL$ tautologies and changes the interpretation of only one logical connective, namely implication. While the rest of the symbols preserve their Boolean interpretation, the implication has a relating, non-extensional meaning, therefore this operation is no longer material implication. Minimal change strategy in application to $\BCL$ could be summed up by the following aphorism: 

\begin{quote}
	``What is Boolean remains Boolean ($\neg, \wedge, \vee$), what is not Boolean ($\rightarrow$) becomes connexive.''
\end{quote}

Due to the fact that the rest of the connectives are interpreted in the standard way, material implication could be incorporated as a derived symbol, defined by means of other Boolean connectives. Connexive implication in $\BCL$ is interpreted using  extensional truth values with the addition of an intensional factor - the relating relation. This relation is intended to express the connection between two expressions. So, instead of saying that two sentences are related, we can specify the way in which they are related, and that means they are \textit{connected} (in some sense of connexivity). This way in relating semantics the very high level of connexivity is preseved. This was the core idea behind the creation of Boolean Connexive Logic. 

The language of $\BCL$, $\lBCL$, consists of the same symbols as the language of $\CPL$: variables: $\Var = \{ \oSen_{\oNum} \colon \oNum \in \N \}$,  one unary connective $\neg$, three binary connectives $\wedge, \vee, \rightarrow$ and brackets  ), (. On the other hand, the language of $\MBCL$,   $\lMBCL$,  is just an extension of $\lBCL$ with unary modal operators $\Box, \Diamond$. The set of formulas of $\lBCL$ is defined in the standard way and is denoted as $\fBCL$. Analogously, $\fMBCL$ denotes formulas of $\lMBCL$. Obviously, $\fCPL = \fBCL \subset \fMBCL$. Let $\A, \B \in \fBCL$ (for $\BCL$) or $\A, \B \in \fMBCL$ (for $\MBCL$). In  both languages we will use the following abbreviations for, respectively, material implication and material equivalence:
\begin{description}
	\item ($\supset$)  $\A \supset \B := \neg \A \vee \B$
	\smallskip
	\item ($\equiv$)   $\A \equiv \B := (\neg \A \vee \B) \wedge (\neg \B \vee \A)$.
\end{description}

The main feature of any connexive logic is the validity of Aristotle's and Boethius' theses. The following four expressions are schemata of those theses in the language of Boolean Connexive Logics. 
\medskip
\begin{description}
	\item[$\aAi$] $\neg (\neg \A \rightarrow \A)$
	\smallskip
	\item[$\aAii$] $\neg (\A \rightarrow \neg \A)$
	\smallskip
	\item[$\aBi$] $(\A \rightarrow \B) \rightarrow \neg (\A \rightarrow \neg \B)$ 
	\smallskip
	\item[$\aBii$] $(\A \rightarrow \neg  \B) \rightarrow \neg (\A \rightarrow \B)$
\end{description}

\noindent Moreover the arrow $\to$ is required not to be symmetric, that is $(A \to B) \supset (B \to A)$ should not be a theorem of any connexive logic\footnote{Check \cite{HWOnline} for a definition of the properties that are widely accepted as characterizing connexivity and an extensive introduction to the topic.}.

Semantics for $\BCL$ is pretty simple, due to the lack of interpretation of modal symbols, thus it will be described firstly. A model for $\lBCL$ is defined as an ordered pair of a valuation function and a binary relation on formulas with the following truth-conditions. 

\begin{df}[Model for $\lBCL$]\label{ModelBCL}
	A model for $\lBCL$ (\textit{based on relation} $\R$) is an ordered pair $\langle v,\R \rangle$, where:
	\begin{description}
		\item $\bullet$  $\eVal \colon \Var \longrightarrow \{1,0\}$ is a valuation of variables
		\smallskip
		\item $\bullet$ $\R \subseteq \fBCL \times \fBCL$ is a binary relation between formulas.
	\end{description}
\end{df}

We assume the following truth-conditions:

\begin{df}[Truth-conditions in model for $\lBCL$]\label{InterpretationBCL}
	Let $\A \in \fBCL$. $A$ is \textit{true} in $\M = \langle \eVal, \R \rangle$ (in short: $\M \vDash \A$; $\M \not \vDash \A$, if it is not the case) iff for all $\B, \C \in \fBCL$:
	\begin{description}
		
		\item $(\Var)$ $\eVal(\A) = 1$, if $\A \in \Var$
		\smallskip
		\item $(\neg)$ $\M \nvDash \B$, if $\A = \neg \B$
		\smallskip
		\item $(\wedge)$ $\M \vDash \B$ and $\M \vDash \C$, if $\A = \B \wedge \C$
		\smallskip
		\item $(\vee)$ $\M \vDash \B$ or $\M \vDash \C$, if $\A = \B \vee \C$
		\smallskip
		\item $(\rightarrow)$ $[\M \nvDash \B$ or $\M \vDash \C]$ and $\R(\B,\C)$, if $\A = \B \rightarrow \C$.
	\end{description}
\end{df}

\n As we can see from the previous definition, the implication ($\rightarrow$) has a non-classical interpretation in $\BCL$. One part of the truth condition is an assignment of values to its components, and the second part is a relation between those components. 

In the definition of model for $\lBCL$ the set of formulas $\fBCL$ plays an active role, since $\R$ is a relation over $\fBCL$. It could be argued that a more exhaustive presentation of a model should include the set of formulas, obtaining the tuple $\langle \fBCL, \eVal, \R \rangle$. Despite that, we will keep omitting $\fBCL$, because the language $\lBCL$ is fixed and there is no risk of confusion.

Let us denote the class of all  relations $\R \subseteq \fBCL \times \fBCL$ as $\cRel$, and to denote subsets of $\cRel$ we will use $\cRelsub, \cRelsub_{1}, \cRelsub_{2},...$ respectively. Then we will denote the class of all models for $\fBCL$ by $\cModBCL{\cRel}$, and by $\cModBCL{\cRelsub}$ the class of all models based on some subset $\cRelsub \subseteq \cRel$.  The same notation will be used for $\fMBCL$, if it is not misleading. So, when we talk about $\MBCL$, $\cRel$ will denote $\fMBCL \times \fMBCL$ etc. Now can introduce the notion of validity.

\begin{df}[Validity for $\fBCL$]\label{ValidityBCL}
	Let $\A \in \fBCL$. $\A$ is \textit{valid} in a relation $\R \in \cRel$ (in short: $\R \vDash \A$) iff for all valuations $\eVal$, $\langle \eVal, \R \rangle \vDash \A$. Similarly, $\A$ is \textit{valid} in a class of relations $\cRelsub \subseteq \cRel$ (in short: $\vDash_{\cRelsub} \A$) iff for all relations $\R \in \cRelsub$, $\R \vDash \A$.
\end{df}

Due to the fact that $\BCL$ is a connexive logic, it is crucial for Aristotle's and Boethius' laws to be satisfied by the models. Henceforth, the relating relation has to be limited by some specific conditions. We will use the abbreviation $\sim \R(\A,\B)$ when $\R(\A,\B)$ does not hold. Let $\A, \B \in \fBCL$, and $\R \in \cRel$. In \cite{JarmuzekMalinowski2019} the following properties were defined: 

\begin{description}
	\item[$\scAi$] $\sim \R( \A, \neg \A)$
	\smallskip
	\item[$\scAii$] $\sim \R(\neg \A, \A)$
	\smallskip
	\item[$\scBo$] $\R(\A,\B) \Rightarrow \, \sim \R(\A, \neg \B)$
	\smallskip
	\item[$\scBi$] $\R(\A \rightarrow \B, \neg (\A \rightarrow \neg \B))$
	\smallskip
	\item[$\scBii$] $\R(\A \rightarrow \neg \B, \neg (\A \rightarrow \B))$
	\smallskip
	\item[$\scCUN$] $\R(\A, \B) \Rightarrow \R(\neg \A, \neg \B)$.
	\smallskip
\end{description}

\n The conditions $\scAi$ and $\scAii$ correspond to Aristotle's theses, whereas the combination of conditions $\scBo$, $\scBi$ and $\scBii$ correspond to Boethius' theses. Let us denote the class of all relations that satisfy those conditions as $\cRelBCL$ and the class of all relations satisfying stated conditions with the addition of $\scCUN$ as $\cRelBCLCUN$.

The mentioned correspondence between theorems $\aAi$, $\aAii$, $\aBi$, $\aBii$ and $\scAi$, $\scAii$, $\scBo$, $\scBi$, $\scBii$ was proved in \cite{JarmuzekMalinowski2019}. Two theorems state that if the class of relations satisfy conditions $\scAi$, $\scAii$, $\scBo$, $\scBi$, $\scBii$ it is the case that Aristotle's and Boethius' theses are valid in such structure\footnote{Thanks to the transparent behaviour of the relating relation $\R$, it is easy to implement variations of connexivity. E.g., if we want a semantics which validates not the full Boethius' theses but their weaker version in the form of $(A \to B) \supset \neg(A \to \neg B)$ and $(A \to \neg B) \supset \neg(A \to B)$, it is enough to retain $\scBo$ while rejecting $\scBi$ and $\scBii$, respectively. Similarly, the so-called secondary Boethius' theses $(A \to B) \supset \neg(\neg A \to B)$ and $(\neg A \to B) \supset \neg(A \to B)$ can be validated changing $\scBo$ for ($\mathsf{b0'}$) $\R(A,B) \Rightarrow \sim\R(\neg A,B)$, and rewriting $\scBi,\scBii$ as ($\mathsf{b1'}$) $\R(A \to B,\neg(\neg A \to B))$ and ($\mathsf{b2'}$) $\R(\neg A \to B,\neg(A \to B))$.}. But the backward implication is not true, unless all the relations in the class satisfy condition $\scCUN$.

\begin{tw}[Theorem 5.1 from \cite{JarmuzekMalinowski2019}]\label{Cor1BCL}
	Let $\R \in \cRel$, then:
	\begin{description}
		\item[(a)] $\R$ satisfies $\scAi \Rightarrow \R \vDash \neg (\A \rightarrow \neg \A)$
		\smallskip
		\item[(b)] $\R$ satisfies $\scAii \Rightarrow \R \vDash \neg (\neg \A \rightarrow \A)$
		\smallskip
		\item[(c)] $\R$ satisfies $\scBo$ and $\scBi \Rightarrow \R \vDash (\A \rightarrow \B) \rightarrow \neg (\A \rightarrow \neg \B)$
		\smallskip
		\item[(d)] $\R$ satisfies $\scBo$ and $\scBi \Rightarrow \R \vDash (\A \rightarrow \neg \B) \rightarrow \neg(\A \rightarrow \B)$.
	\end{description}
\end{tw}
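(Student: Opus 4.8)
The plan is to argue directly from the truth-conditions of Definition~\ref{InterpretationBCL}, fixing an arbitrary valuation $\eVal$ and writing $\M = \langle \eVal, \R \rangle$; since each target formula is a validity, it suffices to show it is true in every such $\M$. The engine of all four cases is the two-part clause $(\rightarrow)$: the formula $\B \rightarrow \C$ is true in $\M$ exactly when both an extensional condition, $[\M \nvDash \B$ or $\M \vDash \C]$, and a relating condition, $\R(\B, \C)$, hold. The relating conjunct thus acts as a gate: whenever $\sim \R(\B, \C)$, the implication $\B \rightarrow \C$ is false in $\M$ irrespective of the valuation, so its negation is true by clause $(\neg)$.

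For the Aristotle cases (a) and (b) I would use this gate directly. To obtain $\R \vDash \neg(\A \rightarrow \neg \A)$ from $\scAi$, note that $\M \vDash \neg(\A \rightarrow \neg \A)$ iff $\M \nvDash (\A \rightarrow \neg \A)$; but truth of $\A \rightarrow \neg \A$ would require $\R(\A, \neg \A)$, which $\scAi$ forbids. Hence $\M \nvDash (\A \rightarrow \neg \A)$ for every $\eVal$, and the negation is valid. Case (b) is the mirror image, using $\scAii$ to deny $\R(\neg \A, \A)$; in neither case does the extensional conjunct play any role.

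The Boethius cases (c) and (d) carry one extra layer, since the principal connective is now an outer implication and I must verify both conjuncts of clause $(\rightarrow)$ for it. In (c) the relating conjunct $\R(\A \rightarrow \B, \neg(\A \rightarrow \neg \B))$ is supplied verbatim by $\scBi$. For the extensional conjunct I would assume $\M \vDash (\A \rightarrow \B)$ and derive $\M \vDash \neg(\A \rightarrow \neg \B)$: truth of $\A \rightarrow \B$ yields $\R(\A, \B)$ through its own relating conjunct, whereupon $\scBo$ forces $\sim \R(\A, \neg \B)$ and so closes off $\A \rightarrow \neg \B$ as false. Case (d) is symmetric: its relating conjunct $\R(\A \rightarrow \neg \B, \neg(\A \rightarrow \B))$ is an instance of $\scBii$ (the reference to $\scBi$ in the statement should read $\scBii$), while the extensional conjunct follows by reading $\scBo$ contrapositively---from the truth of $\A \rightarrow \neg \B$ we extract $\R(\A, \neg \B)$, and the contrapositive of $\scBo$ delivers $\sim \R(\A, \B)$, so that $\M \nvDash (\A \rightarrow \B)$.

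I expect no genuine obstacle: the statement is a routine unwinding of the semantics. The only points requiring care are bookkeeping ones---keeping the extensional and relating conjuncts of clause $(\rightarrow)$ apart, and observing that (d) relies on the contrapositive form of $\scBo$ together with $\scBii$ rather than $\scBi$. The conceptual moral is that connexivity here is forced entirely by the refusals of the relating relation, so the valuation $\eVal$ never actually enters the argument.
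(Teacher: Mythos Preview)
Your proposal is correct. The paper does not supply its own proof of this theorem---it is quoted verbatim as Theorem~5.1 of \cite{JarmuzekMalinowski2019}---but the very argument you sketch reappears inside the paper's Soundness Theorem~\ref{SoundnessBCL}, where the validity of $\aAi$, $\aAii$, $\aBi$, $\aBii$ is checked. The only cosmetic difference is that for the Boethius cases the paper argues by contradiction (assume the outer implication fails, split into the extensional and relating disjuncts, and derive absurdity from $\scBi$ and $\scBo$), whereas you verify the two conjuncts of clause~$(\rightarrow)$ directly; the content is identical. Your observation that item~(d) really uses $\scBii$ rather than $\scBi$ is also correct and matches what the paper does in practice.
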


Theorem \ref{Cor1BCL} can be summed up by the following fact. The proof is obvious due to the definition of the class $\cRelBCL$.

\begin{fa}\label{FactCorr1BCL}
	If  $\R \in \cRelBCL$ then $\aAi, \aAii, \aBi$, $\aBii$ are valid in $\R$.
\end{fa}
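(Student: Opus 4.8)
The plan is to derive the statement directly from Theorem~\ref{Cor1BCL}, using nothing beyond the definition of the class $\cRelBCL$. First I would fix an arbitrary $\R \in \cRelBCL$ and recall that, by the very definition of this class, membership in $\cRelBCL$ means precisely that $\R$ satisfies all of the conditions $\scAi$, $\scAii$, $\scBo$, $\scBi$, $\scBii$ simultaneously. Hence every hypothesis demanded by the four clauses of Theorem~\ref{Cor1BCL} is available at once, and the work reduces to reading off the appropriate conclusions.

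Next I would apply the four clauses of Theorem~\ref{Cor1BCL} in turn, matching each condition to the thesis it validates. Since $\R$ satisfies $\scAi$, clause~(a) yields $\R \vDash \neg(\A \rightarrow \neg \A)$, i.e.\ $\R \vDash \aAii$; since $\R$ satisfies $\scAii$, clause~(b) yields $\R \vDash \neg(\neg \A \rightarrow \A)$, i.e.\ $\R \vDash \aAi$. For the Boethius' theses, $\R$ satisfies $\scBo$ together with the relevant Boethius conditions among $\scBi$, $\scBii$, so clauses~(c) and~(d) give $\R \vDash \aBi$ and $\R \vDash \aBii$ respectively. Collecting these four facts shows that $\aAi, \aAii, \aBi, \aBii$ are all valid in $\R$, and since $\R$ was an arbitrary element of $\cRelBCL$, the claim follows.

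There is essentially no obstacle here: the result is just a repackaging of Theorem~\ref{Cor1BCL} once the defining conditions of $\cRelBCL$ are unfolded, which is exactly why the proof can be called obvious. The only point deserving a moment's care is the \emph{cross-indexing} between the labels---thesis $\aAi$ is the one secured by condition $\scAii$ via clause~(b), while thesis $\aAii$ is secured by $\scAi$ via clause~(a)---so one should read the correspondences off the actual formulas $\neg(\neg \A \rightarrow \A)$ and $\neg(\A \rightarrow \neg \A)$ rather than from the numbering of the clauses.
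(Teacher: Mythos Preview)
Your proposal is correct and follows exactly the paper's own approach: the paper simply remarks that the fact is ``obvious due to the definition of the class $\cRelBCL$'' and treats it as an immediate repackaging of Theorem~\ref{Cor1BCL}. Your observation about the cross-indexing between $\aAi/\aAii$ and $\scAi/\scAii$ is accurate and a useful caution, though the paper does not comment on it.
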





\begin{tw}[Theorem 6.1 from \cite{JarmuzekMalinowski2019}]\label{CorrespondenceTwoBCL}
	Let $\R \in \cRel$ and $\R$ satisfies $\scCUN$ then: 
	\begin{description}
		\item[(a)] $\R$ satisfies $\scAi \Leftrightarrow \R \vDash \neg (\A \rightarrow \neg \A)$
		\smallskip
		\item[(b)] $\R$ satisfies $\scAii \Leftrightarrow \R \vDash \neg (\neg \A \rightarrow \A)$
		\smallskip
		\item[(c)] $\R$ satisfies $\scBo$ and $\scBi \Leftrightarrow \R \vDash (\A \rightarrow \B) \rightarrow \neg (\A \rightarrow \neg \B)$
		\smallskip
		\item[(d)] $\R$ satisfies $\scBo$ and $\scBi \Leftrightarrow \R \vDash (\A \rightarrow \neg \B) \rightarrow \neg(\A \rightarrow \B)$.
	\end{description}
\end{tw}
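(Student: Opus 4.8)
The plan is to notice first that in each of (a)--(d) the forward implication ($\Rightarrow$) is already delivered by Theorem \ref{Cor1BCL}, which needs no hypothesis on $\scCUN$ at all; so the entire burden lies in the backward implications ($\Leftarrow$), and it is exactly there that $\scCUN$ will be used. I would prove each backward direction by contraposition: assuming that the relevant relational condition fails, I would exhibit a valuation $\eVal$ together with a suitable \emph{instance} of the schema under which the corresponding formula comes out false in $\langle \eVal, \R \rangle$, thereby witnessing that the schema is not valid in $\R$.

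For (a) and (b) I would first unfold the truth conditions $(\rightarrow)$ and $(\neg)$ to compress the antecedent. For (a), $\M \vDash \A \rightarrow \neg \A$ holds iff $\M \nvDash \A$ and $\R(\A, \neg \A)$, since the Boolean disjunct $\M \nvDash \A$ or $\M \vDash \neg \A$ collapses to $\M \nvDash \A$. Hence $\neg(\A \rightarrow \neg \A)$ fails under $\eVal$ precisely when some valuation falsifies $\A$ while $\R(\A, \neg \A)$ holds. If $\scAi$ fails, the relational part $\R(\A, \neg \A)$ is supplied by assumption, and if moreover $\A$ is falsifiable in $\R$ the argument closes at once. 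Case (b) is the mirror image: $\M \vDash \neg \A \rightarrow \A$ reduces to $\M \vDash \A$ and $\R(\neg \A, \A)$, so there one instead needs $\A$ to be satisfiable.

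The main obstacle, and the reason $\scCUN$ is indispensable, is the degenerate case in which no falsifying (respectively satisfying) valuation exists, i.e.\ where $\A$ is already valid in $\R$. Here I would use $\scCUN$ to transport the violation across a negation: from $\R(\A, \neg \A)$ it yields $\R(\neg \A, \neg \neg \A)$, and since $\R \vDash \A$ the formula $\neg \A$ is false under \emph{every} valuation; taking the instance $\X = \neg \A$, the antecedent $\neg \A \rightarrow \neg \neg \A$ then becomes true everywhere, so $\neg(\neg \A \rightarrow \neg \neg \A)$ is falsified and $\R \nvDash \neg(\X \rightarrow \neg \X)$. Because every formula is either falsifiable in $\R$ or valid in $\R$, these two cases are exhaustive, which settles (a); (b) is handled symmetrically, a valid $\A$ being pushed to a contradiction $\neg \A$.

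For (c) and (d) I would split the conclusion into its two relational ingredients. The condition $\scBi$ drops out immediately: the $(\rightarrow)$ truth condition for the whole schema carries a relating conjunct $\R(\A \rightarrow \B, \neg(\A \rightarrow \neg \B))$ that does not depend on the valuation, so validity of the schema forces it outright, with no appeal to $\scCUN$. To recover $\scBo$ I would unfold the remaining Boolean part of the truth condition; after simplification it reduces to the disjunction $\M \vDash \A$, or $\sim \R(\A, \B)$, or $\sim \R(\A, \neg \B)$. A failure of $\scBo$ means both $\R(\A, \B)$ and $\R(\A, \neg \B)$ hold, whence this disjunction collapses to $\M \vDash \A$; if $\A$ is falsifiable this fails for some valuation and the schema is refuted, whereas if $\R \vDash \A$ I would again push the negations through with $\scCUN$, obtaining $\R(\neg \A, \neg \B)$ and $\R(\neg \A, \neg \neg \B)$ and hence a failure of $\scBo$ at the instance $(\neg \A, \neg \B)$ whose first argument $\neg \A$ is now a contradiction, so the schema fails there. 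Part (d) is argued identically, the directly-forced relational conjunct now being the one matching the schema $(\A \rightarrow \neg \B) \rightarrow \neg(\A \rightarrow \B)$. Thus the single recurring difficulty throughout is the $\R$-valid antecedent, resolved uniformly by the closure-under-negation condition $\scCUN$, which is precisely why it is absent from the one-directional Theorem \ref{Cor1BCL} but required here.
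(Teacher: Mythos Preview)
The paper does not supply a proof of this theorem; it is quoted as Theorem~6.1 of \cite{JarmuzekMalinowski2019} and immediately summarized by Fact~\ref{CorrespondenceFactTwoBCL} with no argument given, so there is nothing to compare your proposal against here.

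Your argument is sound in substance. The forward directions are indeed covered by Theorem~\ref{Cor1BCL}, and your contrapositive treatment of the backward directions is correct: a failure of the relational condition at some pair either yields a falsifying valuation directly, or, in the degenerate case, $\scCUN$ shifts the counterexample to the negated pair where the required truth value is guaranteed. Two small remarks. First, your closing sentence on (b) contains a slip: the degenerate case there is that $\A$ is \emph{unsatisfiable} (not valid), and $\scCUN$ produces the instance $\neg\A$, which is valid and therefore satisfiable; the mechanism is the mirror of (a), but your description reverses the two cases. Second, in (d) the relational conjunct forced by validity of the displayed schema is $\R(\A\rightarrow\neg\B,\neg(\A\rightarrow\B))$, which is $\scBii$, not the $\scBi$ that the statement lists; the same appears in Theorem~\ref{Cor1BCL}(d) and is almost certainly a transcription error in the paper, since $\scBi$ does not yield that conjunct. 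Your argument correctly proves the version with $\scBii$ in place of $\scBi$, which is the intended one.
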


Theorem \ref{CorrespondenceTwoBCL} can be summed up by the following fact. The proof is obvious due to the definition of the class $\cRelBCLCUN$.

\begin{fa}\label{CorrespondenceFactTwoBCL} $\R \in \cRelBCLCUN$ iff $\aAi$, $\aAii$, $\aBi$, $\aBii$ are valid in $\R$.
\end{fa}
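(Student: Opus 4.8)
The plan is to read the biconditional off the four equivalences of Theorem~\ref{CorrespondenceTwoBCL} together with the definition of $\cRelBCLCUN$ as the class of relations meeting $\scAi,\scAii,\scBo,\scBi,\scBii$ and $\scCUN$. I would separate the two implications: the left-to-right half is routine, while the entire difficulty is concentrated in the right-to-left half, and specifically in recovering $\scCUN$.

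\emph{Left to right.} If $\R\in\cRelBCLCUN$ then $\R$ satisfies in particular $\scAi,\scAii,\scBo,\scBi,\scBii$, so $\R\in\cRelBCL$ and Fact~\ref{FactCorr1BCL} (equivalently Theorem~\ref{Cor1BCL}) yields at once that $\aAi,\aAii,\aBi,\aBii$ are valid in $\R$. Note that $\scCUN$ plays no role in this direction.

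\emph{Right to left.} I would first extract the conditions the theses force outright. Because $\aBi$ has main connective $\rightarrow$, clause $(\rightarrow)$ of Definition~\ref{InterpretationBCL} makes its validity require, for every valuation, that $\R$ relate its antecedent to its consequent; since this must hold for all $\A,\B$, we obtain $\scBi$, and symmetrically $\aBii$ gives $\scBii$. To reach $\scAi,\scAii,\scBo$ and $\scCUN$ the natural step is to apply the right-to-left halves of Theorem~\ref{CorrespondenceTwoBCL}(a)--(d).

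This is exactly where the main obstacle sits. Those backward halves are stated under the standing assumption that $\R$ satisfies $\scCUN$, so using them to obtain $\scAi,\scAii,\scBo$ is circular unless $\scCUN$ is already in hand---yet $\scCUN$ is itself part of what must be proved. Moreover $\scCUN$ cannot be extracted from the theses at all: by Fact~\ref{FactCorr1BCL} every $\R\in\cRelBCL$ validates all four theses, while $\scCUN$ is a strictly stronger requirement, so any $\R\in\cRelBCL\setminus\cRelBCLCUN$ is a counterexample to the implication as worded. Concretely, the least relation satisfying $\scBi$ and $\scBii$ meets $\scAi,\scAii,\scBo$ but fails $\scCUN$ (each of its related pairs has an implication as first coordinate, so none of the negated pairs demanded by $\scCUN$ is present), and it validates all four theses by Fact~\ref{FactCorr1BCL}. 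The same phenomenon even blocks $\scAi$: enlarging that relation by a single pair $(\C,\neg\C)$ with $\C$ a classical tautology keeps every thesis valid---$\C\rightarrow\neg\C$ is false under all valuations---while $\scAi$ now fails. I therefore expect the right-to-left implication to be unprovable as stated; to obtain a correct biconditional one must carry $\scCUN$ as a standing hypothesis, after which Theorem~\ref{CorrespondenceTwoBCL}(a)--(d) closes the argument immediately.
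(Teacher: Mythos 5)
Your proposal is essentially right, and on the delicate point it is more careful than the paper itself. The paper offers no argument beyond the one-line remark that the fact is obvious from the definition of the class $\cRelBCLCUN$, i.e.\ it treats the fact as a direct summary of Theorem~\ref{CorrespondenceTwoBCL}, thereby silently importing that theorem's standing hypothesis that $\R$ satisfies $\scCUN$. You are correct that, read literally as a biconditional about an arbitrary $\R \in \cRel$, the right-to-left direction fails: validity of the four theses does force $\scBi$ and $\scBii$ (clause $(\rightarrow)$ of Definition~\ref{InterpretationBCL} makes the relating pair a precondition of truth), but it cannot force $\scCUN$, and your witness checks out. In the least relation satisfying $\scBi$ and $\scBii$ every first coordinate is an implication and every second coordinate is the negation of an implication; hence $\scAi$, $\scAii$, $\scBo$ hold, the four theses are valid by Fact~\ref{FactCorr1BCL}, and the pairs of the form $(\neg(\A \rightarrow \B), \neg\neg(\A \rightarrow \neg\B))$ demanded by $\scCUN$ are absent, so $\R \in \cRelBCL$ but $\R \notin \cRelBCLCUN$. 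Your second example, adjoining $(\C, \neg\C)$ for a tautology $\C$, indeed shows further that not even $\scAi$ is recoverable from thesis-validity alone, since the material part of $\C \rightarrow \neg\C$ fails under every valuation. Once $\scCUN$ is reinstated as a standing hypothesis --- which is the reading the paper intends, and which the analogous modal Fact~\ref{FactCorrespondenceTwoMBCL} inherits from Theorem~\ref{CorrespondenceTwoMBCL} in the same way --- your closing step coincides exactly with the paper's implicit argument: left-to-right is Fact~\ref{FactCorr1BCL} with $\scCUN$ idle, and right-to-left follows by detaching $\scAi$, $\scAii$, $\scBo$, $\scBi$, $\scBii$ from the backward halves of Theorem~\ref{CorrespondenceTwoBCL}(a)--(d) (where item (d) should be read with $\scBii$ in place of $\scBi$, an evident typo in the paper's statement). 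In short: there is no gap on your side; the gap you located is in the wording of the fact, which is true only relative to the $\scCUN$ hypothesis of Theorem~\ref{CorrespondenceTwoBCL}, and your counterexamples make precise why that hypothesis cannot be dropped.
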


The classes of models $\cModBCL{\cRelBCL}$ and $\cModBCL{\cRelBCLCUN}$ define the least $\BCL$ and the least $\BCL$ closed under negation respectively (in short: $\BCLCUN$). This became clear when axiom systems were introduced for these logics. The systems were published in \cite{Klonowski2}. For the logic characterized by $\cModBCL{\cRelBCL}$ it was assumed: 
\begin{description}
	\item[$\aCPL$] $\A$, where $\A$ is a substituition instance of a classical propositional tautology
	\smallskip
	\item[$\aAi$] $\neg (\A \rightarrow \neg \A)$
	\smallskip
	\item[$\aAii$] $\neg (\neg \A \rightarrow \A)$
	\smallskip
	\item[$\aBi$] $(\A \rightarrow \B) \rightarrow \neg (\A \rightarrow \neg \B)$
	\smallskip
	\item[$\aBii$] $(\A \rightarrow \neg \B) \rightarrow \neg (\A \rightarrow \B)$
	\smallskip
	\item[$\aImp$] $(\A \rightarrow \B) \supset (\A \supset \B)$
	\smallskip
\end{description}

\n And the disjunctive syllogism:

\begin{align*}
	\acDS \quad\quad \begin{array}{l}
		A  \\
		A \supset B \\ \hline
		B
	\end{array}
\end{align*}

\n The described axiom system will be denoted by $\acBCL$.  Whereas the system with addition of below axiom schemata for logic defined by $\cModBCL{\cRelBCLCUN}$ will be denoted as $\acBCLCUN$:

\begin{description}
	\item[$\aCUNi$] $(\A \rightarrow \B) \supset ((\neg \A \rightarrow \neg \B) \vee (\neg \A \wedge \B))$
	\smallskip
	\item[$\aCUNii$] $(\A \rightarrow \B) \supset (\neg \neg \A \rightarrow \neg \neg \B)$
\end{description}

\n Since $\acBCL$ contains additionally all classical tautologies, both systems are  closed under the Modus Ponens  for relating implication (by $\aImp$ and $\acDS$): 

\begin{align*}
	\aDS \quad\quad \begin{array}{l}
		A  \\
		A \rightarrow B \\ \hline
		B
	\end{array}
\end{align*}


It becomes apparent from the conditions imposed over the relating relation $\R$ that it is a minimal relation, in the sense that it is required to satisfy the properties that guarantee connexivity and nothing else. Some very weak properties do not hold in general for $\R$, like reflexivity and transitivity, which, anticipating the completeness results of theorem \ref{completeness_basic_bcl}, can be easily seen to fail for the the logics $\acBCL$ and $\acBCLCUN$ in the form of the lack of the theorems $A \to A$ and $((A \to B) \land (B\to C)) \supset (A \to C)$ in the calculi. There is a precise choice behind this unusual behavior of the connexive arrow. The enterprise of Boolean Connexive Logic has so far focused on finding minimal conditions capable of capturing connexivity within the framework of relating semantics. This is achieved with $\acBCL$, the minimal \BCL, which contains the four connexive laws and, as completeness makes apparent, invalidates symmetry for the connexive arrow, that is $(A \to B) \supset (B \to A)$ is not a theorem of the calculus. Once that foundational point has been consolidated, the focus shifts to the possibility of building upwards in the direction of stronger systems. Adding properties of $\R$ which are easy to express syntactically, like the two mentioned above, amounts to provide axiomatic extensions of said minimal systems which were the focus of the preliminary study. In a similar fashion, the systems that we will introduce in the current work are meant to capture the minimal changes that are required from the starting point of $\BCL$ (and in the following \MBCL) in order to achieve a generalized closure under negation (and demodalization for \MBCL). In this sense the focus here is more the collocation of our systems within the field of relating semantics and how they may shed a light on the interplay between conditions over the relating relation and the associated logical systems, instead of trying to provide a philosophical discussion over the nature of connexivity. For this reason we prefer to stick to the weakest systems that allow to reach our goals.

Semantics for $\MBCL$ is more complicated, due to the fact that the relating relation has to be associated to each possible world. Hence, relational structure has to be combined with a modal structure in the definition of a model for $\lMBCL$. Such structures are of the form $\langle \cPos, \rAvail, \cRModal \rangle$, which will be called \textit{combined frames}, or just \textit{frames} for simplicity. The combined frame is composed of a modal frame $\langle \cPos, \rAvail \rangle$, and of a relating frame $\cRModal \subseteq \{ \R_{\oPos} \colon \R \in \cRel = \fMBCL \times \fMBCL, \oPos \in \cPos \}$.

\begin{df} [Model for $\lMBCL$]\label{ModelMBCL}
	A model for $\lMBCL$ is an ordered quadruple $\langle \cPos, \rAvail, \cRModal, \eVal \rangle$ such that:
	\begin{description}
		\item $\bullet$ $\cPos \neq \emptyset$
		\smallskip
		\item $\bullet$ $\rAvail \subseteq \cPos \times \cPos$
		\smallskip
		\item $\bullet$ $\cRModal$ is a family of relations, $\RModal \subseteq \fMBCL \times \fMBCL$ for any $\oPos \in \cPos$
		
		\item $\bullet$ $\eVal \colon \cPos \times \Var \longrightarrow  \{1,0\}$ is a valuation of variables.
	\end{description}
\end{df}

\begin{df}[Truth-conditions in model for $\lMBCL$]\label{InterpretationMBCL}
	Let $\A\in\fMBCL$. Let $\M = \langle \cPos, \rAvail, \cRModal, \eVal \rangle$ and $\oPos \in \cPos$. $A$ is \textit{true} in $\M, \oPos$ (in short: $\M, \oPos \vDash \A$; $\M, \oPos \not \vDash \A$, if it is not the case) iff for all $\B ,\C \in \fMBCL$:
	\begin{description}
		\item $(\Var)$ $\eVal (\oPos, \A) = 1$, if $\A \in \Var$
		\smallskip
		\item $(\neg)$   $\M, \oPos \nvDash \B$, if $\A = \neg \B$
		\smallskip
		\item  $(\wedge)$   $\M, \oPos \vDash \B$ and $\M, \oPos \vDash \C$, if $\A = \B \wedge \C$
		\smallskip
		\item $(\vee)$  $\M, \oPos \vDash \B$ or $\M, \oPos \vDash \C$, if $\A = \B \vee \C$
		\smallskip
		\item $(\Box)$   $\forall_{\mathsf{u} \in \cPos} (\rAvail(\oPos, \mathsf{u} ) \Rightarrow \M, \mathsf{u} \vDash \B)$, if $\A = \Box \B$
		\smallskip
		\item$ (\Diamond)$  $\exists_{\mathsf{u} \in W} (\rAvail(\oPos, \mathsf{u})$ and $\M, \mathsf{u} \vDash \B)$, if $\A = \Diamond \B$
		\smallskip
		\item $(\rightarrow)$  $(\M, \oPos \nvDash \B$ or $\M, \oPos \vDash \C)$ and $\RModal(\B,\C)$, if $\A = \B \rightarrow \C$.
	\end{description}
\end{df}

\begin{df}[Validity for $\fMBCL$]\label{ValidityMBCL}
	
	Let $\A \in \fMBCL$. $\A$ is \textit{valid} in a frame $\langle \cPos, \rAvail, \cRModal \rangle$ (in short: $\langle \cPos, \rAvail, \cRModal \rangle \vDash \A$) iff for all valuations $\eVal$ and all worlds $\oPos \in \cPos$, $\langle \cPos, \rAvail, \cRModal, \eVal \rangle,  \oPos \vDash \A$. Similarly, $\A$ is \textit{valid} in a class of combined frames \textbf{F} (in short: $\vDash_{\textbf{F}} \A$), iff for every frame $\langle \cPos, \rAvail, \cRModal \rangle \in \textbf{F}$, $\langle \cPos, \rAvail, \cRModal \rangle \vDash A$.
\end{df}

In analogy to $\BCL$, in $\MBCL$ we can provide the list of conditions that have to be met by  relating frames for Aristotle's and Boethius' laws to be valid. These conditions are the already mentioned $\scAi$, $\scAii$, $\scBo$, $\scBi$, $\scBii$, noticing that now the relation symbol $\R$ is indexed by some world $\oPos$, so $\R_{\oPos}$\footnote{Strictly speaking, all these conditions are formulated in $\fBCL$ and they should be translated into the richer language $\fMBCL$. Since this abuse of notation is harmless, we will keep using the same names for the new conditions in the modal setting.}.

One might ask whether the interpretation of $\rightarrow$ in relating semantics, especially in the context of the modal operators $\Box$ and $\Diamond$, is not "syntax in disguise", because some of the modal principles require that there is a relation between the corresponding formulas in the worlds. E.g., the validity of $\Box A \rightarrow \Box \Box A$ will require (a) that $\R_\oPos(\Box A, \Box \Box A)$ and (b) that the modal frame is transitive (see~p.~12). Although this objection can be answered in several ways, we will consider two responses here.

First, using models containing a relation defined on pairs of formulas is an accepted methodology for relating logic. Various types of such models (also with possible worlds) are presented, for example, in \cite{TJ2020}, examples of use and related problems in \cite{TJiFP2021}, while the history of such approaches is described in \cite{klonowski2021b}. So it is a matter of an aesthetic approach rather than a technical one.

Secondly, in relating semantics the basic idea behind the interpretation of a relating connective is that the logical value of a given complex proposition, with the relating connective as the main connective, is the result of two things:  (i) the logical values of the main components of this complex proposition, supplemented with (ii) a valuation of the relation between these components.  The latter element is a formal representation of an intensional relation that emerges from the connection of several simpler propositions into one more complex proposition. So it is worth noting that the relation $\R_{\oPos}$ could be replaced by a valuation function $v_{\oPos}^{\R} \colon \fMBCL \times \fMBCL \longrightarrow \{0, 1 \}$ which, for example,  satisfies the condition $v_ {\oPos}^{\R}(\Box A, \Box \Box A)= 1$ or other conditions. When we later introduce the function of demodalization 
$\fDem$ (pp. 24--25), that condition (and other conditions for modal principles)  by $\scDemL$ slims down to reflexivity $\R_\oPos(A, A)$ or in the functional notation to $v_ {\oPos}^{\R}(A, A)= 1$ (where in $A$ non-modal connectives occur). In the light of relating semantics this assumption just states that any formula is connected to itself. Since $v_ {\oPos}^{\R}$ can  be considered as  a function that valuates the relationships between formulas, this should no longer raise the suspicion that we are dealing with syntax, but with a second valuation in the model. A discussion on the basics of relating semantics, including the problem  of valuations instead of relations is also available in the article \cite{TJ2020}.

Let us denote the class of combined structures that satisfy conditions $\scAi$, $\scAii$, $\scBo$, $\scBi$, $\scBii$ as $\cRelMBCL$ and the class of combined structures satisfying the previous conditions with the addition of the modalized $\scCUN$ as $\cRelMBCLCUN$. Let moreover $\cModBCL{\cRelMBCL}$ be the class of all models built over frames from $\cRelMBCL$, and similarly for $\cModBCL{\cRelMBCLCUN}$. The theorems regarding the relations between $\cRelMBCL$, $\cRelMBCLCUN$ and the laws of connexive logic are analogous to the theorems for $\BCL$ (see \cite{JarmuzekMalinowski2019a}).

\begin{tw}[Theorem 3.1 from \cite{JarmuzekMalinowski2019a}]\label{CorrespondenceOneMBCL}
	Let $\cRModal$ be a relating frame, $\M = \langle \cPos, \rAvail, \cRModal, \eVal \rangle$ and $\oPos \in \cPos$ then:
	\begin{description}
		\item[(a)] $\RModal$ satisfies $\scAi \Rightarrow \M, \oPos \vDash \neg (\A \rightarrow \neg \A)$
		\smallskip
		\item[(b)] $\RModal$ satisfies $\scAii \Rightarrow \M, \oPos \vDash \neg (\neg \A \rightarrow \A)$
		\smallskip
		\item[(c)] $\RModal$ satisfies $\scBo$ and $\scBi \Rightarrow \M, \oPos \vDash (\A \rightarrow \B) \rightarrow \neg (\A \rightarrow \neg \B)$
		\smallskip
		\item[(d)] $\RModal$ satisfies $\scBo$ and $\scBi \Rightarrow \M, \oPos \vDash (\A \rightarrow \neg \B) \rightarrow \neg(\A \rightarrow \B)$.
	\end{description}
\end{tw}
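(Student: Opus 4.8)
The plan is to prove each of the four implications $(a)$--$(d)$ by the same direct method: fix an arbitrary model $\M = \langle \cPos, \rAvail, \cRModal, \eVal \rangle$ and an arbitrary world $\oPos \in \cPos$, assume the relevant condition on $\RModal$, and unfold the truth-condition for $\rightarrow$ from Definition \ref{InterpretationMBCL} to show the stated formula is true at $\oPos$. The key observation throughout is that the truth-condition $(\rightarrow)$ splits into two conjuncts: an extensional part ($\M, \oPos \nvDash \B$ or $\M, \oPos \vDash \C$) and a relational part $\RModal(\B,\C)$. To establish that a \emph{negated} implication $\neg(\B \rightarrow \C)$ holds, by the $(\neg)$ clause it suffices to show $\M, \oPos \nvDash (\B \rightarrow \C)$, and this can be achieved by falsifying \emph{either} conjunct. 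The conditions $\scAi$ and $\scAii$ are precisely statements that the relational conjunct fails, so the proofs of $(a)$ and $(b)$ proceed purely through the relational part.

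For $(a)$, I would reason as follows: by the $(\neg)$ clause, $\M, \oPos \vDash \neg(\A \rightarrow \neg\A)$ holds iff $\M, \oPos \nvDash (\A \rightarrow \neg\A)$. By the $(\rightarrow)$ clause the latter requires the negation of the conjunction, so it suffices that $\sim\!\RModal(\A, \neg\A)$; but this is exactly condition $\scAi$ (read in the world $\oPos$), which is assumed. Part $(b)$ is symmetric, using $\scAii$ in the form $\sim\!\RModal(\neg\A, \A)$ applied to the implication $\neg\A \rightarrow \A$. These two cases are immediate once the truth-condition is unpacked.

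For $(c)$ and $(d)$, which concern Boethius' theses, the main implication $(\A \rightarrow \B) \rightarrow \neg(\A \rightarrow \neg\B)$ is itself a relating implication, so I would first handle its own $(\rightarrow)$ condition. The relational conjunct for the \emph{outer} arrow is supplied by $\scBi$, namely $\RModal(\A \rightarrow \B, \neg(\A \rightarrow \neg\B))$. For the extensional conjunct I must show: if $\M, \oPos \vDash (\A \rightarrow \B)$ then $\M, \oPos \vDash \neg(\A \rightarrow \neg\B)$. Assuming $\M, \oPos \vDash (\A \rightarrow \B)$, the $(\rightarrow)$ clause yields $\RModal(\A, \B)$; condition $\scBo$ then gives $\sim\!\RModal(\A, \neg\B)$, which falsifies the relational conjunct of $\A \rightarrow \neg\B$, hence $\M, \oPos \nvDash (\A \rightarrow \neg\B)$, i.e.\ $\M, \oPos \vDash \neg(\A \rightarrow \neg\B)$. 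Part $(d)$ runs identically with the roles of $\B$ and $\neg\B$ interchanged, using $\scBii$ for the outer relational conjunct and again $\scBo$ (via $\RModal(\A, \neg\B) \Rightarrow \sim\!\RModal(\A, \neg\neg\B)$, noting $\neg\neg\B$ behaves as $\B$ semantically) for the extensional part.

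I do not expect any genuine obstacle: the entire argument is a mechanical unfolding of Definition \ref{InterpretationMBCL}, and since the truth-conditions are stated world-by-world and the relating conditions are indexed by $\oPos$, the modal apparatus $\langle \cPos, \rAvail \rangle$ plays no active role here — the proof is essentially identical to the non-modal case of Theorem \ref{Cor1BCL}, just relativized to a fixed world. The only point demanding a little care is keeping straight, in parts $(c)$ and $(d)$, which arrow's relational conjunct is being discharged by which condition ($\scBi$/$\scBii$ for the outer implication versus $\scBo$ for the inner one), and ensuring the extensional half of the outer implication is correctly derived from the assumption that its antecedent holds at $\oPos$.
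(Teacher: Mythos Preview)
The paper does not give a proof of this theorem; it is quoted from \cite{JarmuzekMalinowski2019a} and stated without argument, so there is nothing in the paper to compare your proposal against. Your treatment of parts (a), (b), and (c) is correct and is the standard unpacking of the truth clauses.

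There is, however, a genuine slip in your handling of (d). From $\RModal(\A,\neg\B)$ you instantiate $\scBo$ with $\B:=\neg\B$ to obtain $\sim\RModal(\A,\neg\neg\B)$, and then appeal to the fact that ``$\neg\neg\B$ behaves as $\B$ semantically'' to conclude $\sim\RModal(\A,\B)$. That inference is invalid: $\RModal$ is a relation on \emph{syntactic} formulas, and $\B$ and $\neg\neg\B$ are distinct formulas, so nothing in the hypotheses lets you pass from $\sim\RModal(\A,\neg\neg\B)$ to $\sim\RModal(\A,\B)$. (Indeed, the whole point of the property $\scCUN$ and its generalizations in Section~\ref{sec:4} is that such transfers are \emph{additional} assumptions on $\R$, not automatic.) The repair is immediate: simply take the contrapositive of $\scBo$ itself, which reads $\RModal(\A,\neg\B) \Rightarrow {\sim\RModal(\A,\B)}$, giving exactly what you need without any detour through $\neg\neg\B$. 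As a side remark, the theorem as printed (both here and in the non-modal Theorem~\ref{Cor1BCL}) lists $\scBi$ rather than $\scBii$ in the hypothesis of (d); your instinct to invoke $\scBii$ for the outer relational conjunct is the right one and matches how the paper itself handles $\aBii$ in the soundness proof (Theorem~\ref{SoundnessBCL}).
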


Theorem \ref{CorrespondenceOneMBCL} can be summed up by the following fact. The proof is obvious due to the definition of the class $\cRelMBCL$.

\begin{fa}\label{FactCorrespondenceOneMBCL}
	If $\langle \cPos, \rAvail, \cRModal \rangle \in \cRelMBCL$ then  $\aAi, \aAii, \aBi, \aBii$ are valid in $\langle \cPos, \rAvail, \cRModal \rangle$.
\end{fa}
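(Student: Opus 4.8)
The plan is to derive Fact \ref{FactCorrespondenceOneMBCL} as an immediate corollary of Theorem \ref{CorrespondenceOneMBCL}, exploiting the definition of the class $\cRelMBCL$. Recall that $\cRelMBCL$ is by definition the class of all combined frames $\langle \cPos, \rAvail, \cRModal \rangle$ whose associated relating frames satisfy, at every world, the conditions $\scAi$, $\scAii$, $\scBo$, $\scBi$, $\scBii$. Hence the entire task reduces to unwinding the quantifier over worlds in the definition of validity (Definition \ref{ValidityMBCL}) and matching the hypotheses of Theorem \ref{CorrespondenceOneMBCL} against the conditions guaranteed by membership in $\cRelMBCL$.

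First I would fix an arbitrary frame $\langle \cPos, \rAvail, \cRModal \rangle \in \cRelMBCL$, an arbitrary valuation $\eVal$, and an arbitrary world $\oPos \in \cPos$, forming the model $\M = \langle \cPos, \rAvail, \cRModal, \eVal \rangle$. By the defining property of $\cRelMBCL$, the relation $\RModal$ at this world satisfies all five conditions $\scAi$, $\scAii$, $\scBo$, $\scBi$, $\scBii$. I would then apply the four clauses of Theorem \ref{CorrespondenceOneMBCL} one at a time: clause (a) with hypothesis $\scAi$ yields $\M, \oPos \vDash \neg(\A \rightarrow \neg \A)$, i.e.\ $\aAi$; clause (b) with $\scAii$ yields $\aAii$; and clauses (c) and (d), whose hypotheses $\scBo$ and $\scBi$ (together with $\scBii$ in the relevant reading) are likewise guaranteed, yield $\aBi$ and $\aBii$ respectively. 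Since $\eVal$ and $\oPos$ were arbitrary, Definition \ref{ValidityMBCL} gives validity of each of $\aAi, \aAii, \aBi, \aBii$ in the frame.

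There is essentially no obstacle here: the only thing to watch is the bookkeeping between the two labelling conventions. Theorem \ref{CorrespondenceOneMBCL} lists $\scBo$ and $\scBi$ as the hypotheses for clauses (c) and (d), whereas the definition of $\cRelMBCL$ carries all of $\scBo$, $\scBi$, $\scBii$; one should simply note that membership supplies whatever condition each clause requires, so the implications fire without further work. I would therefore present the argument as a single short paragraph observing that, by definition of $\cRelMBCL$, the hypotheses of every clause of Theorem \ref{CorrespondenceOneMBCL} hold at every world of the frame, so the conclusions hold at every world under every valuation, which is exactly validity in the frame. As the authors themselves anticipate, ``the proof is obvious due to the definition of the class $\cRelMBCL$,'' and the completeness of the statement is guaranteed precisely because $\cRelMBCL$ was defined to bundle together exactly the five conditions feeding the four clauses.
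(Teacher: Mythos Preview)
Your proposal is correct and follows exactly the paper's approach: the authors state that ``the proof is obvious due to the definition of the class $\cRelMBCL$,'' and your argument simply unpacks this by noting that membership in $\cRelMBCL$ supplies the hypotheses of each clause of Theorem \ref{CorrespondenceOneMBCL} at every world, whence validity follows by Definition \ref{ValidityMBCL}.
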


\begin{tw}[Theorem 4.1 from \cite{JarmuzekMalinowski2019a}]\label{CorrespondenceTwoMBCL}
	
	Let $\M = \langle \cPos, \rAvail, \cRModal, \eVal \rangle$ and $\oPos \in \cPos$. Let $\R_{\oPos}$ satisfy $\scCUN$. Then:
	\begin{description}
		\item[(a)] $\R_{\oPos}$ satisfies $\scAi \Leftrightarrow \M, \oPos \vDash \neg (\A \rightarrow \neg \A)$
		\smallskip
		\item[(b)] $\R_{\oPos}$ satisfies $\scAii \Leftrightarrow  \M, \oPos \vDash \neg (\neg \A \rightarrow \A)$
		\smallskip
		\item[(c)] $\R_{\oPos}$ satisfies $\scBo$ and $\scBi \Leftrightarrow  \M, \oPos \vDash (\A \rightarrow \B) \rightarrow \neg (\A \rightarrow \neg \B)$
		\smallskip
		\item[(d)] $\R_{\oPos}$ satisfies $\scBo$ and $\scBi \Leftrightarrow  \M, \oPos \vDash (\A \rightarrow \neg \B) \rightarrow \neg(\A \rightarrow \B)$.
	\end{description}
\end{tw}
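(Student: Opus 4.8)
The plan is to establish only the right-to-left implications of (a)--(d), since Theorem \ref{CorrespondenceOneMBCL} already supplies the left-to-right directions; the hypothesis that $\R_{\oPos}$ satisfies $\scCUN$ is exactly what drives these converses. Throughout, the modal apparatus $\langle \cPos, \rAvail \rangle$ will play no role: every clause invoked is evaluated at the single world $\oPos$, so the reasoning is local and mirrors the non-modal Theorem \ref{CorrespondenceTwoBCL} with $\R$ replaced by $\R_{\oPos}$. The uniform first step is to unfold the clauses $(\rightarrow)$ and $(\neg)$ of Definition \ref{InterpretationMBCL} into metalinguistic statements, using repeatedly that $\M, \oPos \vDash \neg \B$ iff $\M, \oPos \nvDash \B$.

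For (a), unfolding yields that $\M, \oPos \vDash \neg(\A \rightarrow \neg \A)$ holds iff $\M, \oPos \vDash \A$ or $\sim\R_{\oPos}(\A, \neg \A)$. Reading the schema as quantified over all formulas, I would invoke this equivalence both at $\A$ and at $\neg \A$ (at the one fixed valuation). Assuming toward a contradiction that $\R_{\oPos}(\A, \neg \A)$, the instance at $\A$ forces $\M, \oPos \vDash \A$; by $\scCUN$, $\R_{\oPos}(\A, \neg \A)$ gives $\R_{\oPos}(\neg \A, \neg \neg \A)$, so the instance at $\neg \A$ forces $\M, \oPos \vDash \neg \A$, i.e. $\M, \oPos \nvDash \A$ --- contradiction. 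Hence $\scAi$. Item (b) is symmetric: $\M, \oPos \vDash \neg(\neg\A \rightarrow \A)$ unfolds to ``$\M, \oPos \nvDash \A$ or $\sim\R_{\oPos}(\neg\A, \A)$'', and $\scCUN$ carries $\R_{\oPos}(\neg\A, \A)$ to $\R_{\oPos}(\neg\neg\A, \neg\A)$, producing the same clash at the instance for $\neg\A$.

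For (c), unfolding the truth condition of $(\A \rightarrow \B) \rightarrow \neg(\A \rightarrow \neg \B)$ splits it into a relational conjunct, which is exactly $\R_{\oPos}(\A \rightarrow \B, \neg(\A \rightarrow \neg \B))$, i.e. $\scBi$, and a Boolean conjunct that simplifies to ``$\M, \oPos \vDash \A$ or $\sim\R_{\oPos}(\A, \B)$ or $\sim\R_{\oPos}(\A, \neg \B)$''. The relational conjunct delivers $\scBi$ immediately. For $\scBo$, I would assume $\R_{\oPos}(\A, \B)$ and $\R_{\oPos}(\A, \neg \B)$ both hold; the Boolean conjunct at $(\A, \B)$ then forces $\M, \oPos \vDash \A$. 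Applying $\scCUN$ twice --- to $\R_{\oPos}(\A,\B)$ and to $\R_{\oPos}(\A,\neg\B)$ --- yields $\R_{\oPos}(\neg\A, \neg\B)$ and $\R_{\oPos}(\neg\A, \neg\neg\B)$, whereupon the Boolean conjunct at the instance $(\neg\A, \neg\B)$ forces $\M, \oPos \vDash \neg\A$, contradicting $\M, \oPos \vDash \A$. Thus $\scBo$. For (d) the Boolean conjunct of $(\A \rightarrow \neg \B) \rightarrow \neg(\A \rightarrow \B)$ is identical, so the same double application of $\scCUN$ gives $\scBo$, while its relational conjunct is now $\R_{\oPos}(\A \rightarrow \neg\B, \neg(\A \rightarrow \B))$, i.e. $\scBii$ (the condition that in fact matches the consequent in (d)).

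The only non-routine point --- the main obstacle --- is pinpointing where $\scCUN$ must enter. Each unfolded condition leaves a spurious valuation-dependent disjunct $\M, \oPos \vDash \A$, and without $\scCUN$ the converse genuinely fails, because that disjunct can be satisfied on its own. The device is to reuse the schema at the negated formula(s) and to transport the relevant relational facts across negation via $\scCUN$, so that the instance at $\A$ and the instance at $\neg\A$ impose contradictory demands on whether $\M, \oPos \vDash \A$. Everything else is the mechanical unfolding of Definition \ref{InterpretationMBCL} together with the collapse $\M, \oPos \vDash \neg\B \Leftrightarrow \M, \oPos \nvDash \B$.
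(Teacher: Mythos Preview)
The paper does not supply its own proof of this theorem: it is quoted as Theorem~4.1 from \cite{JarmuzekMalinowski2019a} and simply stated, so there is no in-paper argument to compare against. What can be said is that your proposal is a correct and complete proof of the result, and it follows the natural strategy one would expect (and that the cited source presumably uses): the forward implications are inherited from Theorem~\ref{CorrespondenceOneMBCL}, and for the converses you exploit $\scCUN$ to instantiate each schema both at $\A$ and at $\neg\A$, so that the valuation-dependent disjunct left over after unfolding the truth conditions is squeezed out by contradiction. Your unfoldings and simplifications are accurate, and the localisation to a single world $\oPos$ is exactly right --- neither $\rAvail$ nor the other worlds play any role.

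One point worth flagging explicitly, which you already notice in passing: the relational conjunct extracted in item~(d) is $\R_{\oPos}(\A \rightarrow \neg\B, \neg(\A \rightarrow \B))$, which is $\scBii$, not $\scBi$ as the statement of (d) reads. This is almost certainly a typo carried over from the source (the same slip appears in the non-modal Theorem~\ref{CorrespondenceTwoBCL}(d) and in Theorem~\ref{CorrespondenceOneMBCL}(d)); indeed $\scBi$ together with $\scCUN$ does not yield $\scBii$, so (d) as literally written fails even in the forward direction. Your parenthetical remark handles this correctly by identifying $\scBii$ as the condition that actually corresponds to the formula in (d).
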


The proof of another fact is obvious due to the definition of the class $\cRelMBCLCUN$.

\begin{fa}\label{FactCorrespondenceTwoMBCL} $\langle \cPos, \rAvail, \cRModal \rangle \in \cRelMBCLCUN $ iff $ \aAi, \aAii, \aBi, \aBii$ are valid in $\langle \cPos, \rAvail,$ $\cRModal \rangle$. \end{fa}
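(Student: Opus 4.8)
The plan is to mirror the reasoning behind Fact~\ref{CorrespondenceFactTwoBCL} in the modal setting, using Theorem~\ref{CorrespondenceTwoMBCL} as the workhorse. Since the claimed statement is a biconditional about membership in $\cRelMBCLCUN$, I would unpack the definition of that class: a frame $\langle \cPos, \rAvail, \cRModal \rangle$ lies in $\cRelMBCLCUN$ precisely when, at every world $\oPos \in \cPos$, the relation $\R_{\oPos}$ satisfies the modalized versions of $\scAi, \scAii, \scBo, \scBi, \scBii$ together with $\scCUN$. The key observation is that Theorem~\ref{CorrespondenceTwoMBCL} already hands us, under the standing assumption that $\R_{\oPos}$ satisfies $\scCUN$, the full two-way correspondence between each connexive condition on $\R_{\oPos}$ and the local validity of the matching connexive thesis at $\M, \oPos$.

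First I would prove the left-to-right direction. Assume $\langle \cPos, \rAvail, \cRModal \rangle \in \cRelMBCLCUN$. Then every $\R_{\oPos}$ satisfies $\scCUN$ and all the connexive conditions, so the hypotheses of Theorem~\ref{CorrespondenceTwoMBCL} are met at each world; invoking the ``$\Rightarrow$'' halves of clauses (a)--(d) gives $\M, \oPos \vDash \aAi, \aAii, \aBi, \aBii$ for every model $\M$ over the frame and every $\oPos \in \cPos$. By Definition~\ref{ValidityMBCL} this is exactly validity of the four theses in the frame. For the converse, suppose $\aAi, \aAii, \aBi, \aBii$ are valid in the frame. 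Here I must first secure that each $\R_{\oPos}$ satisfies $\scCUN$, since the backward halves of Theorem~\ref{CorrespondenceTwoMBCL} presuppose it. This is guaranteed because $\scCUN$ is built into the definition of $\cRelMBCLCUN$ as a defining condition, exactly as in the non-modal Fact~\ref{CorrespondenceFactTwoBCL} whose ``obvious'' proof rests on the analogous definitional inclusion. With $\scCUN$ in hand at each world, the ``$\Leftarrow$'' halves of clauses (a)--(d) deliver the connexive conditions on every $\R_{\oPos}$, so the frame satisfies all defining conditions and membership in $\cRelMBCLCUN$ follows.

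The main subtlety, rather than obstacle, is bookkeeping the quantifier over worlds: validity in a frame is a statement holding at every $\oPos \in \cPos$ and under every valuation, whereas Theorem~\ref{CorrespondenceTwoMBCL} is stated pointwise at a fixed $\M, \oPos$. I would make explicit that the universal quantification over worlds and valuations in Definition~\ref{ValidityMBCL} lets one apply the pointwise correspondence uniformly, and conversely that frame-validity descends to every world. Because $\scCUN$ is stipulated as part of the definition of $\cRelMBCLCUN$, no separate argument for it is needed in either direction, which is precisely why the authors can call the proof ``obvious.'' In short, the Fact is the frame-level repackaging of the world-level biconditional of Theorem~\ref{CorrespondenceTwoMBCL}, and the only care required is in matching the scope of the quantifiers in Definition~\ref{ValidityMBCL} against the fixed-world formulation of the theorem.
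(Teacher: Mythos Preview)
Your left-to-right direction is fine, but the right-to-left direction contains a genuine circularity. You assume that the connexive theses are valid in the frame and want to conclude that the frame belongs to $\cRelMBCLCUN$; then you write that $\scCUN$ ``is guaranteed because $\scCUN$ is built into the definition of $\cRelMBCLCUN$ as a defining condition.'' But membership in $\cRelMBCLCUN$ is precisely what you are trying to establish in this direction, so you cannot invoke its defining clauses to secure $\scCUN$. Without $\scCUN$ already in hand, the ``$\Leftarrow$'' halves of Theorem~\ref{CorrespondenceTwoMBCL} are unavailable, and your argument stalls. The same remark applies verbatim to your appeal to Fact~\ref{CorrespondenceFactTwoBCL}: that fact does not supply an independent argument for $\scCUN$ either.

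The paper itself gives no detailed proof, only the remark that the fact is ``obvious due to the definition of the class $\cRelMBCLCUN$.'' Read charitably, the intended claim is the frame-level repackaging of Theorem~\ref{CorrespondenceTwoMBCL} \emph{under the standing assumption that each $\R_{\oPos}$ satisfies $\scCUN$}: among frames whose relating relations are closed under negation, membership in $\cRelMBCLCUN$ is equivalent to validity of the four connexive theses. Under that reading the backward direction goes through immediately from Theorem~\ref{CorrespondenceTwoMBCL}, and there is nothing to prove about $\scCUN$. If instead the biconditional is taken over \emph{all} combined frames, then the backward direction as stated is too strong: validity of $\aAi$, $\aAii$, $\aBi$, $\aBii$ alone does not force $\scCUN$, so no non-circular argument will close the gap. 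Either way, your explicit appeal to the definition to obtain $\scCUN$ is not a valid step.
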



Facts \ref{FactCorrespondenceOneMBCL} and \ref{FactCorrespondenceTwoMBCL} sum up the theorems (\ref{CorrespondenceOneMBCL}, \ref{CorrespondenceTwoMBCL}) about the classes  of $\cRelMBCL$ and $\cRelMBCLCUN$ in the same way as corresponding facts  for $\BCL$.
The classes $\cRelMBCL$ and $\cRelMBCLCUN$ characterize the least $\MBCL$ and the least $\MBCL$ closed under negation (in short: $\MBCLCUN$) respectively. The axiom systems for $\MBCL$ was provided in the work of Klonowski (see \cite{Klonowski2}). The axioms are the same listed for $\BCL$, plus:

\begin{description}
	\item[$\aDual$] $\Diamond \A \equiv \neg \Box \neg \A$
	\smallskip
	\item[$\aKm$] $\Box(\A \supset \B) \supset (\Box \A \supset \Box \A)$
	\smallskip
\end{description}
\n and the necessitation rule:
\begin{align*}
	\anec \quad\quad \begin{array}{l}
		\vdash A  \\
		\hline
		\vdash \Box \A
	\end{array}
\end{align*}

\n We denote the described axiomatic system as $\acMBCL$.

Observe that the decision to employ both the relating implication $\to$ and the box operator $\Box$ in the language $\lMBCL$ is motivated by the independence of each of the two from the other and the remaining base of connectives. Therefore $\lMBCL$ is strictly more expressive than both the non-modal language for $\BCL$ and the standard modal language without relating connectives. We show that $\to$ and $\Box$ are semantically independent.

Suppose that there exists a formula $\phi(A) \in \lMBCL$ built without the use of $\Box$ and in which the formula $A$ occurs. Let us assume that $\phi(A)$ is semantically indistinguishable from $\Box A$, that is for every model $\M$ and $\oPos\in\cPos$, $\M,\oPos\vDash \phi(A)$ iff $\M,\oPos\vDash \Box A$. Let us consider a model $\M = \langle \cPos, \rAvail, \{\R_{\oPos}\}_{\oPos \in \cPos}, \eVal \rangle$ and a point $\oPos\in\cPos$ s.t. for some $p\in\Var, \M,\oPos\vDash \phi(p)$, therefore $\M,\oPos\vDash \Box p$ as well. Let us now build the model $\M' = \langle \cPos', \rAvail', \{\R'_{\oPos}\}_{\oPos \in \cPos'}, \eVal' \rangle$, which differs from $\M$ only for the addition of a single point $\oPos'$ s.t. $\oPos\rAvail'\oPos'$ and for all $q\in\Var, \eVal'(\oPos',q)=0$, while $\R'_{\oPos'}$ is arbitrary. In particular we have $\eVal'(\oPos',p)=0$, hence $\M',\oPos'\nvDash \Box p$. A simple induction on the complexity of $\phi(p)$ proves that for a formula $B$ whose main connective is Boolean it holds that $\M,\oPos\vDash B$ iff $\M',\oPos\vDash B$. Similarly, if $\phi(p)$ is of the form $B \to C$, since $\R_{\oPos}=\R'_{\oPos}$, again $\M,\oPos\vDash B \to C$ iff $\M',\oPos\vDash B \to C$. Therefore $\M',\oPos\vDash \phi(p)$, which contradicts the assumption that $\phi(p)$ and $\Box p$ were logically equivalent.

Now assume that we can express the formula $A \to B$ in terms of a formula $\phi(A,B)$ in which $A,B$ occur and built without the use of $\to$. Suppose that $\phi(A,B)$ is logically equivalent to $A \to B$. Let us consider a model $\M = \langle \cPos, \rAvail, \{\R_{\oPos}\}_{\oPos \in \cPos}, \eVal \rangle$ and a point $\oPos\in\cPos$ s.t. for some $p,q\in\Var, \M,\oPos\vDash \phi(p,q)$, hence by hypothesis $\M,\oPos\vDash p \to q$ follows. Let us now build the model $\M' = \langle \cPos, \rAvail, \{\R'_{\oPos}\}_{\oPos \in \cPos'}, \eVal \rangle$, which differs from $\M$ only for the fact that $\R'_\oPos = \R_\oPos\backslash\lbrace\langle p,q \rangle\rbrace$. An inductive argument similar to the previous one proves that every formula not containing $\to$ is unaffected by the change between the two models, therefore $\M',\oPos\vDash \phi(p,q)$, but since $\sim\R(p,q)$ then $\M',\oPos\nvDash p \to q$. Therefore the formulas aren't logically equivalent.

While in the literature there are well-known systems in which the modal operator can be expressed using implication and vice versa (e.g. some containment logics like Parry's, with $\Box A := (A \to A) \to A$, cf. \cite{fine86}), this is not the case for $\MBCL$, because while the former systems are provided with a strict implication whose semantic value is tied to the accessibility relation, the connexive arrow $\to$ is a relating implication, that is a material - not strict - implication with an added intensional layer, whose value is determined by the relating relation $\R$, but $\rAvail$ and $\R$ have no interplay in the minimal $\MBCL$\footnote{Neither are the extensions of $\MBCL$ presented in the following sections capable of restoring an interdefinability between $\to$ and $\Box$. In the case of $\MBCL$ closed under multiple negations (section \ref{sec:4}) it is evident that, since this axiomatic extension concerns only the interaction between negation and connexive implication, there is no contribution to the possibility of interdefinability. In the case of $\MBCL$ closed under demodalization (section \ref{sec:5}), it should be noticed that the demodalizing axioms are not capable of providing the relating arrow with an intensional layer connected to the accessibility relation, that is to say $\to$ remains a content-sensitive material implication, not a strict one. The above argument for the failure of any candidate formula which is supposed to witness the interfinability between $\to$ and $\Box$ can be adapted to both families of extensions.}.

\vspace{0.2cm}

It is worth noting that the minimal modal Boolean connexive system $\acMBCL$ has none of the principles characteristic of the logic $\aK$ or stronger normal modal logics when expressed by relating implication: 

\begin{description}
	\item[$\aDOne$] $\Diamond \A \rightarrow \neg \Box \neg \A$
	\smallskip
	\item[$\aDTwo$] $\neg \Box \neg \A \rightarrow \Diamond \A$
	\smallskip
	\item[$\aK$] $\Box(\A \rightarrow \B) \rightarrow (\Box \A \rightarrow \Box \B)$
	\smallskip 
	\item[$\aT$] $\Box\A \rightarrow \A$
	\smallskip
	\item[$\aD$] $\Box\A \rightarrow \Diamond \A$
	\smallskip
	\item[$\aB$] $\A \rightarrow \Diamond \Box\A$
	\smallskip
	\item[$\aFour$] $\Box\A \rightarrow \Box \Box\A$
	\smallskip
	\item[$\aFive$] $\Diamond\A \rightarrow \Box \Diamond\A$.
\end{description}

\n Only their weaker versions,  where we replace $\rightarrow$ with $\supset$,  hold in a class of frames once the usual properties over the accessibility relation have been imposed. In \cite{JarmuzekMalinowski2019a} two ways were presented in which we can validate these modal laws in stronger versions. The first one assumes some restrictions on modal and relating frames that we present below. The second is a demodalization move that we discuss in the section \ref{sec:3} and explore extensively in the section  \ref{sec:5}. 

The listed modal laws could be true in the model, if following properties of combined frames are fulfilled. (Their designations match the designations of the corresponding modal principles.) Let $\M = \langle \cPos, \rAvail, \eVal, \cRModal \rangle$ and $\oPos \in \cPos$, then:

\begin{description}
	\item[$\scDOne$] $\RModal(\Diamond \A, \neg \Box \neg \A)$
	\smallskip
	\item[$\scDTwo$] $\RModal(\neg \Box \neg \A, \Diamond \A)$
	\smallskip
	\item[$\scKOne$] $\RModal(\Box(\A \rightarrow \B), (\Box \A \rightarrow \Box \B))$
	\smallskip
	\item[$\scKTwo$] $\forall_{\mathsf{u} \in \cPos}((\rAvail(\oPos, \mathsf{u}) \Rightarrow \R_{\mathsf{u}}(\A, \B)) \Rightarrow \RModal(\Box \A, \Box \B))$
	\smallskip
	\item[$\scT$] $\RModal(\Box \A, \A)\textnormal{ and the modal frame (i.e. $\langle \cPos, \rAvail\rangle$) is reflexive}$
	\smallskip
	\item[$\scD$] $\RModal(\Box \A, \Diamond\A)\textnormal{ and the modal frame (i.e. $\langle \cPos, \rAvail\rangle$) is serial}$
	\smallskip
	\item[$\scB$] $\RModal(\A, \Box \Diamond\A)\textnormal{ and the modal frame (i.e. $\langle \cPos, \rAvail\rangle$) is symmetrical}$
	\smallskip
	\item[$\scFour$] $\RModal(\Box \A, \Box \Box\A)\textnormal{ and the modal frame (i.e. $\langle \cPos, \rAvail\rangle$) is transitive}$
	\smallskip
	\item[$\scFive$] $\RModal(\Diamond \A, \Box \Diamond\A)\textnormal{ and the modal frame (i.e. $\langle \cPos, \rAvail\rangle$) is Euclidean}$.
\end{description}

\n In \cite{JarmuzekMalinowski2019a} it was shown that by imposing the individual properties: $\scDOne$, $\scDTwo$, $\scKOne$, $\scKTwo$, $\scT$, $\scD$, $\scB$, $\scFour$, $\scFive$  on frames from $\cRelMBCL$ or $\cRelMBCLCUN$, we get the classes of frames that validates the modal principles: $\aDOne$, $\aDTwo$, $\aK$, $\aT$, $\aD$, $\aB$, $\aFour$, $\aFive$. It should be noted that the axiom $\aK$ corresponds to two semantic conditions: $\scKOne$, $\scKTwo$. The other axioms are semantically characterized by only one condition.

Let  $X_{1},...,X_{n} \in \{\aDOne, \aDTwo, \aK, \aT, \aD, \aB, \aFour, \aFive \}$. We assume  that $\mathbf{F}_{B,X_{1},...,X_{n}}$ denotes the class of all frames from $\cRelMBCL$ that satisfy the properties corresponding to $X_{1},...,X_{n}$. Similarly, we will denote the systems of axioms reinforced by individual modal rules: $Ax_{\textsf{MBCL}, X_{1}, \dots, X_{n}}$, where $X_{1}, \dots, X_{n}$ are designation of the modal principles. In the work of \cite{Klonowski2}, the author presented the completeness theorems for these axiom systems. 

The notion of syntactic consequence for a logic \textbf{L} is constructed in the standard way and is denoted  by $\vdash_{Ax_\text{L}}$. For readability, we will omit the subscript everywhere it will be clear which logic we mean.

\begin{tw}[Theorem 5.1 from \cite{Klonowski2}]\label{completeness_basic_bcl}
	
	Let $\A \in \fBCL$. Then:
	\begin{description}
		\item[(a)] $\vdash_{\acBCL} \A$  $\Leftrightarrow$ $\vDash_{\cRelBCL} \A$
		\smallskip
		\item[(b)] $\vdash_{\acBCLCUN} \A $  $\Leftrightarrow$ $\vDash_{\cRelBCL^{c}} \A$.
	\end{description}
\end{tw}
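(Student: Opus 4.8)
The plan is to prove both biconditionals by the standard two-sided route, establishing soundness ($\Rightarrow$) and completeness ($\Leftarrow$) separately, running (a) and (b) in parallel and isolating the one genuinely delicate point, namely the construction of the canonical relating relation for part (b).

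For soundness I would induct on the length of a derivation. The axiom $\aCPL$ is valid in every relation because $\neg,\wedge,\vee$ keep their classical reading in Definition~\ref{InterpretationBCL}, so the relating component never interferes with a purely Boolean tautology. The four connexive axioms $\aAi$--$\aBii$ are valid in every $\R\in\cRelBCL$ by Fact~\ref{FactCorr1BCL}, and $\aImp$ is valid because the truth-condition for $\rightarrow$ carries the material clause $[\M\nvDash\B$ or $\M\vDash\C]$ as a conjunct, so $\M\vDash\B\rightarrow\C$ forces $\M\vDash\B\supset\C$. The rule $\acDS$ preserves validity since $\supset$ and $\vee$ are classical, hence so does the derived rule $\aDS$. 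For part (b) I additionally verify that $\aCUNi$ and $\aCUNii$ are valid in every $\R\in\cRelBCLCUN$ by a direct semantic computation, in which the disjunct $\neg\A\wedge\B$ of $\aCUNi$ is precisely what absorbs the case where the material part of $\neg\A\rightarrow\neg\B$ fails while $\scCUN$ still supplies $\R(\neg\A,\neg\B)$.

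For completeness I would use a canonical-model/Lindenbaum argument. Assuming $\nvdash\A$, the set $\{\neg\A\}$ is consistent, so by Lindenbaum's lemma it extends to a maximal consistent set $\Gamma$ with $\A\notin\Gamma$. I set $\eVal(p)=1$ iff $p\in\Gamma$ for $p\in\Var$ and aim at the truth lemma $\M\vDash\X$ iff $\X\in\Gamma$ by induction on $\X$, the Boolean cases being routine from maximality and consistency. For $\X=\B\rightarrow\C$ the relation must satisfy (I) $(\B\rightarrow\C)\in\Gamma\Rightarrow\R(\B,\C)$ and (II) $\R(\B,\C)$ together with $[\B\notin\Gamma$ or $\C\in\Gamma]$ implies $(\B\rightarrow\C)\in\Gamma$; given (I) and (II), $\aImp$ supplies the material half and the truth lemma for $\rightarrow$ follows. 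For part (a) I simply put $\R(\B,\C)$ iff $(\B\rightarrow\C)\in\Gamma$, which makes (I) and (II) immediate, and then check $\scAi,\scAii,\scBo,\scBi,\scBii$ directly: $\aAi,\aAii$ keep $(\A\rightarrow\neg\A)$ and $(\neg\A\rightarrow\A)$ out of $\Gamma$, the $\aBi,\aBii$/$\aDS$ combination forces $\neg(\A\rightarrow\neg\B)\in\Gamma$ whenever $(\A\rightarrow\B)\in\Gamma$, and $\scBi,\scBii$ hold because $\aBi,\aBii$ are theorems. This yields $\M\in\cModBCL{\cRelBCL}$ with $\M\nvDash\A$, i.e. $\nvDash_{\cRelBCL}\A$.

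The hard part will be part (b), since the naive relation does \emph{not} satisfy $\scCUN$: one can have $(\B\rightarrow\C)\in\Gamma$ with $\B\notin\Gamma$, $\C\in\Gamma$, whereas $(\neg\B\rightarrow\neg\C)\notin\Gamma$ because the material part of $\neg\B\rightarrow\neg\C$ then fails. My plan is therefore to take $\R$ to be the closure of the naive relation under $\scCUN$, adding every pair $(\neg^{k}\B,\neg^{k}\C)$ whenever $(\B\rightarrow\C)\in\Gamma$. Closure trivially preserves (I) and the positive conditions $\scBi,\scBii$; the work is to re-establish (II) and the negative conditions for the new pairs. For (II), when the material part of a shifted implication holds I would derive membership in $\Gamma$ by iterating $\aCUNii$ on the even shifts, whose material part coincides with that of $\B\rightarrow\C$, and by using $\aCUNi$ on the odd shifts, where the assumed material part excludes the $\neg\A\wedge\B$-style disjunct; when the material part fails, (II) is vacuous. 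For $\scAi,\scAii,\scBo$ I would show closure cannot create a forbidden pair: e.g.\ $\R(\A,\neg\A)$ would force $(\X\rightarrow\neg\X)\in\Gamma$ after stripping the common negations, contradicting $\aAi$, and $\scBo$ is protected analogously by $\aBi,\aBii$. Verifying that this closure stays inside $\cRelBCLCUN$ without disturbing the truth lemma is the technical crux; once it is in place, completeness for (b) follows exactly as in (a).
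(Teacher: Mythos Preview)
Your approach is correct and matches the overall strategy the paper uses, though note that the paper does not give an independent proof of this particular theorem; it is cited directly from \cite{Klonowski2}. The paper does, however, prove the generalization for $\acBCLGCUN$ in Section~\ref{sec:4} by exactly the route you outline: soundness by induction on derivations, completeness via Lindenbaum and a canonical model whose relating relation is adjusted to ensure closure under $\scCUN$.

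The one presentational difference worth flagging concerns part~(b). You take the canonical relation to be the full closure of $\{(\B,\C):(\B\rightarrow\C)\in\Gamma\}$ under $\scCUN$, adding all pairs $(\neg^{k}\B,\neg^{k}\C)$, and then verify the negative conditions $\scAi,\scAii,\scBo$ and condition~(II) by a case analysis on the iteration depth~$k$. The paper instead works with \emph{two} canonical models (Definitions~\ref{CanonicalModelBCL} and~\ref{CanonicalModelBCLsecond}): the first carries the naive relation, and the second carries the least relation $\R^{\neg}$ satisfying (a)~$\M\vDash\A\rightarrow\B\Rightarrow\R^{\neg}(\A,\B)$ and (c)~$\M\vDash\A\rightarrow\B$ and $\M\vDash\neg\A\wedge\B$ jointly imply $\R^{\neg}(\neg\A,\neg\B)$. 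One then shows the two models are semantically indistinguishable (Lemma~\ref{can_mod1-2LemmaBCL}) and that the second lies in the target class (Lemma~\ref{ClassLemmaBCL}). Your full closure and $\R^{\neg}$ in fact coincide once $\aCUNi,\aCUNii$ are brought in, so there is no mathematical divergence; but the paper's one-step description keeps the case analysis in Lemma~\ref{can_mod1-2LemmaBCL} to two cases rather than an induction on~$k$, and lets $\aCUNi,\aCUNii$ propagate closure inside the proof of Lemma~\ref{ClassLemmaBCL} rather than in the definition of the relation itself. Your verification of $\scBo$ for arbitrary shifts is the genuinely delicate step you identify, and it does go through, but the paper's packaging avoids having to track the parity of~$k$ explicitly.
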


\begin{tw}[Theorem 5.2 from \cite{Klonowski2}]\label{completeness_basic_mbcl}
	
	Let $\A \in \fMBCL$. Then:

	\[ \vdash_{Ax_{\textsf{MBCL}, X_{1}, \dots, X_{n}} } \A \Leftrightarrow  \, \, \,  \vDash_{\mathbf{F}_{B,X_{1},...,X_{n}}} \A\] 
	
	\smallskip
	\smallskip
	
	\n for all  $X_{1}, \dots, X_{n}$ $\in \{\aDOne,$$  \aDTwo,$ $ \aK, \aT, \aD, \aB, \aFour, \aFive \}$.
\end{tw}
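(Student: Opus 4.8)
The plan is to prove the two directions separately: soundness ($\Rightarrow$) by a routine induction on the length of derivations, and completeness ($\Leftarrow$) by a canonical-model construction adapted to the relating setting. Since each modal principle $X_{i}$ contributes one or two fixed conditions on frames, the whole argument is modular: I treat the base system once and then, for each $X_{i}$ actually present, discharge its associated condition.

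For soundness I would check that every axiom is valid in $\mathbf{F}_{B,X_{1},...,X_{n}}$ and that both rules preserve validity. Classical tautologies hold on every frame; the connexive schemata $\aAi$--$\aBii$ are valid throughout $\cRelMBCL$ by Fact \ref{FactCorrespondenceOneMBCL}; and $\aDual$, $\aKm$, $\aImp$ are verified directly from the truth-conditions of Definition \ref{InterpretationMBCL}. For each present modal principle, its validity on frames meeting the matching condition $\scDOne,\dots,\scFive$ is exactly the correspondence established in \cite{JarmuzekMalinowski2019a}. That $\acDS$ (hence the derived $\aDS$) and $\anec$ preserve validity is the usual argument.

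For completeness I would argue contrapositively. Using the Lindenbaum lemma (available since the calculus extends $\CPL$ and is closed under $\acDS$), any $\A$ with $\not\vdash\A$ yields a maximal consistent set $\oPos$ with $\neg\A\in\oPos$. I then build the canonical model $\M=\langle\cPos,\rAvail,\cRModal,\eVal\rangle$, where $\cPos$ is the set of all maximal consistent sets, $\rAvail(\oPos,\mathsf{u})$ holds iff $\{\B:\Box\B\in\oPos\}\subseteq\mathsf{u}$, the valuation is $\eVal(\oPos,\oSen)=1$ iff $\oSen\in\oPos$, and---the crucial relating ingredient---$\RModal(\B,\C)$ holds iff $(\B\rightarrow\C)\in\oPos$. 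The heart of the matter is the Truth Lemma, $\M,\oPos\vDash\A$ iff $\A\in\oPos$, by induction on $\A$. The Boolean clauses are immediate; the $\Box$ clause rests on the existence lemma (if $\Box\B\notin\oPos$, some $\rAvail$-successor omits $\B$), which uses $\anec$ and $\aKm$, and the $\Diamond$ clause reduces via $\aDual$. The only delicate clause is $\rightarrow$: if $(\B\rightarrow\C)\in\oPos$ then $\RModal(\B,\C)$ holds by definition, and $\aImp$ forces $(\B\supset\C)\in\oPos$, so the material half of the truth-condition is secured; conversely, whenever both halves hold, the relational half already says $(\B\rightarrow\C)\in\oPos$.

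It remains---and this I expect to be the main obstacle---to verify that the canonical frame lies in $\mathbf{F}_{B,X_{1},...,X_{n}}$. The connexive conditions hold for the canonical $\RModal$ directly: $\scAi,\scAii$ follow because $\aAi,\aAii$ keep $(\neg\A\rightarrow\A),(\A\rightarrow\neg\A)$ out of every $\oPos$; $\scBo$ follows from $\aBi$ and $\aDS$; and $\scBi,\scBii$ are read off from $\aBi,\aBii$ being theorems. For each present $X_{i}$, the relating part of its condition (e.g. $\RModal(\Box\A,\A)$, $\RModal(\Box\A,\Box\Box\A)$, $\scDOne$, $\scFive$) follows since the corresponding relating axiom sits in every $\oPos$, while the modal-frame part (reflexivity, seriality, symmetry, transitivity, Euclideanness) is obtained in the standard way from the material implication extracted via $\aImp$. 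The genuinely mixed condition $\scKTwo$, present when $\aK$ is chosen, needs one extra step: if $\R_{\mathsf{u}}(\A,\B)$ holds at every successor $\mathsf{u}$ of $\oPos$, i.e. $(\A\rightarrow\B)\in\mathsf{u}$ throughout, then $\Box(\A\rightarrow\B)\in\oPos$ by the $\Box$-characterization, whence $\aK$ delivers $(\Box\A\rightarrow\Box\B)\in\oPos$, that is $\RModal(\Box\A,\Box\B)$. Once frame membership is established, the Truth Lemma gives $\M,\oPos\nvDash\A$, so $\not\vDash_{\mathbf{F}_{B,X_{1},...,X_{n}}}\A$, completing the contrapositive.
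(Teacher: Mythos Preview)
The paper does not supply its own proof of this statement: Theorem \ref{completeness_basic_mbcl} is quoted verbatim as ``Theorem 5.2 from \cite{Klonowski2}'' and left unproved. Your sketch is a faithful reconstruction of the canonical-model argument; in fact your definition of $\M$ coincides with the \emph{first canonical model} that the paper writes out explicitly later (Definition \ref{CanonicalModelMBCLGCUN}) when adapting Klonowski's method to the $\acMBCLGCUN$ case. The verification of the relating conditions $\scAi$--$\scBii$, the modal frame properties via $\aImp$, and the mixed condition $\scKTwo$ are handled correctly. So your proposal is sound and is essentially the approach the paper attributes to \cite{Klonowski2}.

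One remark: the paper, following Klonowski, sometimes passes through a \emph{second} canonical model (Definitions \ref{CanonicalModelBCLsecond}, \ref{CanonicalModelMBCLsecond}) to force extra closure conditions such as $\scCUN$ or $\scCUNk$. For the base system $\acMBCL$ and its modal extensions by $\aDOne,\dots,\aFive$ this detour is unnecessary, since---as you correctly argue---the connexive conditions $\scAi$--$\scBii$ and the relevant modal frame properties are already read off directly from the axioms present in every maximal consistent set. Your single-model route is therefore the appropriate simplification for the theorem as stated.
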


Before moving on, using the above completeness results we discuss what forms of replacement work in  $\MBCL$. We do not consider $\BCL$, since it is a subcase of the one at hand. Let us call two formulas $A,B$ materially equivalent in a calculus $Ax_{\text{L}}$ when $\vdash_{Ax_{\text{L}}} A \equiv B$. A formula $A$ is replaceable for another formula $B$ in case, for every formula $\phi(A)$ with $A$ occurring in it, $\vdash_{Ax_{\text{L}}} \phi(A)$ implies $\vdash_{Ax_{\text{L}}} \phi(B)$. In the case of $\MBCL$, material equivalence is not a sufficient condition for replacement, in fact it fails when it comes to the connexive arrow. 

Consider $\vdash (p \to q) \equiv ((p \to q) \land (r \lor \neg r))$ and the instance of $\aBi$ $\vdash (p \to q) \to \neg (p \to \neg q)$, with $p,q,r\in\Var$. It does not follow that $\vdash ((p \to q) \land (r \lor \neg r)) \to \neg (p \to \neg q)$, in fact it is enough to build a model $\M$ s.t. for some $\oPos\in\cPos$ and we impose $\sim\R_\oPos((p \to q) \land (r \lor \neg r),\neg (p \to \neg q))$. Therefore $\M,\oPos\nvDash ((p \to q) \land (r \lor \neg r)) \to \neg (p \to \neg q)$ despite $\M,\oPos\vDash (p \to q) \to \neg (p \to \neg q)$. Instead if we switch material equivalence with intensional (relating) equivalence then replacement is achieved. Unfortunately, the property holds vacuously. 

\begin{fa}
	Let $A,B,\phi(A)\in\lMBCL$. If $\vdash_{\acMBCL} (A \to B) \land (B \to A)$ and $\vdash_{\acMBCL} \phi(A)$ then $\vdash_{\acMBCL} \phi(B)$.
\end{fa}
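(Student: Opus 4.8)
The plan is to prove the statement vacuously, by showing that the hypothesis $\vdash_{\acMBCL}(A \to B)\wedge(B \to A)$ can never be met: there is no pair of formulas $A,B$ for which both $A \to B$ and $B \to A$ are theorems of $\acMBCL$. Once this is established the conditional holds trivially, its antecedent being always false.

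First I would reduce the conjunctive hypothesis to its two halves. Since $\wedge$ keeps its Boolean reading, from $\vdash_{\acMBCL}(A\to B)\wedge(B\to A)$ one obtains $\vdash_{\acMBCL} A \to B$ and $\vdash_{\acMBCL} B \to A$ using $\aCPL$ together with $\acDS$. By the completeness theorem \ref{completeness_basic_mbcl} (in the case of the empty list of modal principles, so that the class of frames is exactly $\cRelMBCL = \mathbf{F}_B$) both formulas are then valid in every frame of $\cRelMBCL$. By the truth-condition for $\rightarrow$, the relating relation does not depend on the valuation, so validity of $A \to B$ forces $\RModal(A,B)$ to hold at every world of every frame in the class, and likewise validity of $B \to A$ forces $\RModal(B,A)$ everywhere.

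The core of the argument is to pin down exactly which relating pairs are forced in this way. I would introduce the minimal relation $\R_0$ consisting of precisely the Boethius pairs, i.e.\ all pairs of the form $(\C \to \D,\ \neg(\C \to \neg\D))$ and $(\C \to \neg\D,\ \neg(\C \to \D))$ dictated by $\scBi$ and $\scBii$. A routine case analysis on main connectives shows that $\R_0$ already satisfies $\scAi$, $\scAii$, $\scBo$, $\scBi$, $\scBii$; the only delicate point is $\scBo$, where one checks that two Boethius pairs cannot share a first coordinate while their second coordinates differ by a single negation, using unique readability of formulas. Hence a frame whose relating relation equals $\R_0$ at every world lies in $\cRelMBCL$, and in such a frame $\RModal(X,Y)$ holds iff $(X,Y)$ is a Boethius pair. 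Consequently, for $\vdash_{\acMBCL} X \to Y$ it is \emph{necessary} that $(X,Y)$ be a Boethius pair, since otherwise this particular frame already refutes $X \to Y$.

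Applying this observation to both theorems, $(A,B)$ and $(B,A)$ must both be Boethius pairs. But in every Boethius pair the first coordinate is a relating implication ($\C\to\D$ or $\C\to\neg\D$) while the second coordinate is a negation. Thus $(A,B)$ being a Boethius pair makes $A$ an implication, whereas $(B,A)$ being one makes $A$ a negation; since the main connective of a formula is unique, this is a contradiction. Therefore the hypothesis is unsatisfiable and the Fact holds vacuously. I expect the main obstacle to be the verification that the minimal relation $\R_0$ genuinely belongs to $\cRelMBCL$ — in particular the $\scBo$ check — as everything else reduces to the plain syntactic shape mismatch between the two coordinates of a Boethius pair.
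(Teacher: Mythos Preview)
Your proposal is correct and follows the same vacuity strategy as the paper: both argue that no formula of the form $(A\to B)\wedge(B\to A)$ is a theorem of $\acMBCL$, so the conditional holds trivially. The paper's own proof simply asserts that ``it is easy to check that there are no theorems of the form $(A\to B)\wedge(B\to A)$ in $\MBCL$'' and stops there, whereas you actually carry out that check by exhibiting the minimal relation $\R_0$ of Boethius pairs, verifying it lies in $\cRelMBCL$, and deriving the syntactic shape contradiction; so your argument is a fleshed-out version of what the paper leaves implicit.
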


\begin{proof}
	Observe that the only properties of the relating relation holding for $\MBCL$ are $\scAi$, $\scAii$, $\scBo$, $\scBi$, $\scBii$, $\scCUN$. It is easy to check that there are no theorems of the form $(A \to B) \land (B \to A)$ in $\MBCL$. Therefore the premise of the fact is never satisfied and replacement is trivially valid.
\end{proof}

It is more interesting to consider some axiomatic extension of $\MBCL$ in which the premises of replacement are all realized. In that case, if no further constraints are imposed on $R$, replacement generally fails. Let us add to $\acMBCL$ the axioms $(A \land B) \to (B \land A)$ and $(A \land B) \to A$. This axiomatic extension gives a calculus which is complete w.r.t. the subclass of $\mathbf{F}_{B}$ of frames in which $\R_\oPos(A \land B, B \land A)$ and $\R_\oPos(A \land B, A)$ hold for every $\oPos\in\cPos$. Now define a single point model $\M$ whose frame belongs to this class and s.t. $\sim\R_\oPos(p \land q, q)$, for $p,q\in\Var$. Observe that $\R_\oPos(A \land B, A)$ allows conjunction simplification only on the first conjunct, it is order-sensitive. In this model we have $\M,\oPos\vDash ((p \land q) \to (q \land p)) \land ((q \land p) \to (p \land q))$, $\M,\oPos\vDash (p \land q) \to p$ but $\M,\oPos\nvDash (p \land q) \to q$. Therefore replacement generally fails. This negative result can be seen as an upside of the expansions of $\MBCL$ (and $\BCL$), which are hyperintensional systems capable of distinguishing between intensionally equivalent expressions.

\section{Motivation: syncategorematic negation and modalities}
\label{sec:3}

One of the first logical systems defined by relating semantics were presented by Richard Epstein in \cite{epstein1}. By means of the considered relating implication, Epstein captured a content-based conditional, where the content relationship was understood as overlapping of the subject matter of sentences.\footnote{It is also worth noting that in the paper of G\"ardenfors \cite{PG1978} there was an attempt to give postulates for relevance of two sentences in terms of a binary relation $\R$ which resembles pretty much the approach proposed by Epstein in \cite{epstein1}.}  The idea aroused the interest of other logicians (like D. Walton, D. Lewis and  R. Goldblatt), who noticed that Epstein's implication could be used to express the content relationship understood as overlapping of sentence content (see \cite[p. 113]{epstein1.5}; \cite[pp. 156-158]{epstein1}; \cite[pp. 68-70]{epstein2}). 

The logics he considered were named relatedness logics: the logic $\ES$ and the logic $\ER$, which is a sublogic of $\ES$.\footnote{The logic $\ER$ should not be confused with Anderson and Belnap's relevant logic {\bf R}.} The monograph \cite{epstein2} contains the most significant results on the logics defined by Epstein, for instance, the sound and complete axiomatization (cf. \cite{epstein1}).\footnote{More about the history of relating logic and Epstein's contribution can be found in \cite{klonowski2021b}.} 

The main postulate of Epstein's content-related sentences is that the subject matter of sentences is independent from the logical constants occurring in that sentence. This is explicitly given in the following passage:

\begin{quote}
	The logical connectives are syncategorematic: they are neutral with respect to subject matter (\cite[p. 66]{epstein2}).
\end{quote}

Considering a language with $\neg$, $\wedge$, $\rightarrow$ in the signature, the postulate is implemented by five assumptions (\cite[p. 66]{epstein2}):

\begin{description}
	\item $(\R1)$ $\R(A, \neg B) \Leftrightarrow \R(A, B)$
	\smallskip
	\item $(\R2)$ $\R(A, B \wedge C) \Leftrightarrow \R(A, B \rightarrow C)$
	\smallskip
	\item $(\R3)$ $\R(A, B) \Leftrightarrow \R(B, A)$
	\smallskip 
	\item $(\R4)$ $\R(A, A)$
	\smallskip
	\item $(\R5)$ $\R(A, B \wedge C) \Leftrightarrow \R(A, B) \text{ or } \R(A, C)$.
\end{description}

Since Boolean Connexive Logics are defined by relating semantics and additionally connexive logic is motivated by the idea of some special relationship between the sentences in the conditional, it seems natural to consider at least some of the $\R1$-$\R5$ conditions in the context of $\scAi$, $\scAii$, $\scBo$, $\scBi$, $\scBii$.

It is clear, however, that the combination of all these conditions leads to a contradiction, as stated by the following fact:

\begin{fa}\label{On inconsistent conditions} Let $\cRelsub \subseteq \cRel$.  If $\cRelsub$ contains exactly those relations that satisfiy one of the following sets of conditions:
	
	\begin{itemize}
		\item $\{ \scAi$,  $\R1$,  $\R4 \}$ 
		\item  $\{ \scAii$, $\R1$, $\R3$, $\R4 \}$
		\item $\{ \scBo$, $\scBi$,  $\R1 \}$ 
		\item $\{ \scBo$, $\scBii$,  $\R1 \}$,
	\end{itemize}
	
	\n $\cRelsub$ is the empty set. 
\end{fa}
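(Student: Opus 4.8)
The plan is to show that each of the four listed sets of conditions is jointly unsatisfiable; since $\cRelsub$ is defined to contain exactly the relations meeting one of these sets, it will then follow that no relation qualifies and hence $\cRelsub = \emptyset$. I would treat the four cases separately, but all of them run on the same engine: condition $(\R1)$ asserts $\R(A,\neg B)\Leftrightarrow\R(A,B)$, so it lets one freely insert or delete a negation in the second argument of $\R$. This is exactly what collides with the ``anti-relatedness'' clauses $\scAi$, $\scAii$ and $\scBo$, each of which forbids a particular instance of relatedness.

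For the first set $\{\scAi,(\R1),(\R4)\}$, I would start from $(\R4)$, which gives $\R(A,A)$ for every $A$; instantiating $(\R1)$ with the second argument equal to $A$ yields $\R(A,\neg A)\Leftrightarrow\R(A,A)$, so $\R(A,\neg A)$ holds, directly contradicting $\scAi$. For the second set $\{\scAii,(\R1),(\R3),(\R4)\}$ I would extend this chain: having obtained $\R(A,\neg A)$ as above, symmetry $(\R3)$ gives $\R(\neg A,A)$, contradicting $\scAii$. (One could also bypass $(\R3)$ by applying $(\R4)$ to the formula $\neg A$ and then $(\R1)$, so $(\R3)$ is in fact dispensable for the unsatisfiability.)

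For the two Boethius sets I would exploit the fact that $\scBi$ and $\scBii$ assert \emph{positive} instances of relatedness that $(\R1)$ can rewrite. In the third set $\{\scBo,\scBi,(\R1)\}$, clause $\scBi$ gives $\R(A\rightarrow B,\neg(A\rightarrow\neg B))$; applying $(\R1)$ strips the outer negation to yield $\R(A\rightarrow B,A\rightarrow\neg B)$; feeding this into $\scBo$, with arguments $A\rightarrow B$ and $A\rightarrow\neg B$, produces $\sim\R(A\rightarrow B,\neg(A\rightarrow\neg B))$, which contradicts $\scBi$. The fourth set $\{\scBo,\scBii,(\R1)\}$ is literally the same argument with the roles of $B$ and $\neg B$ interchanged: starting from $\scBii$ one obtains $\R(A\rightarrow\neg B,A\rightarrow B)$ via $(\R1)$ and then $\sim\R(A\rightarrow\neg B,\neg(A\rightarrow B))$ via $\scBo$, contradicting $\scBii$.

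The proof is essentially mechanical, so I do not anticipate a genuine obstacle; the only point requiring care is the correct instantiation of the universally quantified schemata. In particular, in the Boethius cases one must recognize that $\scBo$ is to be applied to the pair $\langle A\rightarrow B,\,A\rightarrow\neg B\rangle$ (respectively $\langle A\rightarrow\neg B,\,A\rightarrow B\rangle$) produced by the $(\R1)$-rewrite, keeping track of which formula occupies the negated second slot, so that the conclusion of $\scBo$ lines up precisely with the negation of the premise supplied by $\scBi$ (respectively $\scBii$).
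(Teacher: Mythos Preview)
Your proposal is correct. The paper states this fact without proof, evidently treating the inconsistencies as routine; your case-by-case derivation supplies exactly the missing argument and is the natural way to verify the claim. Your side remark that $(\R3)$ is dispensable in the second set (apply $(\R4)$ to $\neg A$ and then $(\R1)$) is a nice observation that goes slightly beyond what the paper records.
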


The fact \ref{On inconsistent conditions} gives the exemplary sets of conditions that are inconsistent. Thus, the entire set of Epstein's content-related assumptions, together with the semantic assumptions of Boolean Connexive Logics, is inconsistent. And since it defines the empty set of relations $\cRelsub$, we consequently have an empty set of models  $\cModBCL{\cRelsub}$, but $\emptyset$ determines the  trivial logic (by \ref{ValidityBCL} or  by \ref{ValidityMBCL} in the modal context) - the set of all formulas.

However, instead of adding all these assumptions, or even some of them, we could consider incremental moves that bring the concept of connexivity closer to the relation between sentences as a kind of subject matter relation. Such a small move was made when in the semantics for Boolean Connexive Logic a closure condition for negation was imposed on the relating relation:

\begin{description}
	
	\item[$\scCUN$] $\R(\A, \B) \Rightarrow \R(\neg \A, \neg \B)$.
	\smallskip
\end{description}

\n Indeed, the $\scCUN$ condition states that adding the negation to two sentences does not invalidate the fact that the sentences are connected. Of course, this condition implies that we can add the negation $n$-times in this way by applying the condition $\scCUN$ $n$-times.

\begin{description}
	\item $\R( \A,  \B) \Rightarrow \R( \underbrace{\neg \dots \neg}_{n} \A, \underbrace{\neg \dots \neg}_{n} \B)$
\end{description}

But why would we add the negation the same number of times on both sides? Also, different numbers of additional negations could preserve the fact that the two sentences are connected. 

Moreover, the assumption that removing negations on both sides preserves the relationship between sentences seems to be another step towards Epstein's postulate. Thus, the opposite implication to $\scCUN$ deserves a separate study.

\begin{description}
	
	\item $\R(\neg \A, \neg \B) \Rightarrow \R(\A, \B)$.
	\smallskip
\end{description}

\n The implication can be generalized as described earlier, so that we can remove a different number of negations on both sides of the relation.

Putting the ideas together we can present the generalised closure under negation:

\begin{description}
	\item[$\scGCUN$] $\R(\underbrace{\neg \dots \neg}_{k} \A, \underbrace{\neg \dots \neg}_{l} \B) \Rightarrow \R( \underbrace{\neg \dots \neg}_{m} \A, \underbrace{\neg \dots \neg}_{n} \B)$
\end{description}

\n Condition $\scGCUN$ states that the negation is syncategorematic or partially syncategorematic, if we fix some numbers $k$, $l$, $m$, $n \in \mathbb{N}$. In the section  \ref{sec:4} we discuss the condition and propose an adequate axiomatization of Boolean Connexive Logics determined by $\scGCUN$ and its variants. 

The language considered by Epstein did not contain modalities, but the postulate of content independence from logical constants can also be extended to modalities. We consider $\MBCL$ which is defined on the set of formulas $\fMBCL$, containing $\Diamond$ and $\Box$.

In the paper \cite{JarmuzekMalinowski2019a}, it was proposed to consider such a possibility. Thus, having two sentences $p$ and $q$ that are connected: $\R(p, q)$, we can assume that the addition of any modality does not invalidate this state, so for example, also  $\R (\Diamond p, \Box q)$, etc. It is also possible to assume the opposite, i.e. that sentences with modalities that are connected do not cease to be related when the modality is removed. Thus, if $\R (\Diamond p, \Box q)$, then $\R(p, q)$. Finally, both assumptions can be made simultaneously. In order to present these three properties in general terms, the cited article provides a function that removes modalities from formulas. $\MBCL$ is significantly strengthened by imposing these properties on the classes of frames $\cRelMBCL$ and $\cRelMBCLCUN$. 

Undoubtedly, such an approach fits in with Epstein's postulate that the logical connectives are neutral with respect to subject matter, namely they are syncategorematic. 
In article \cite{JarmuzekMalinowski2019a}, we suggest the same about modalities: 

\begin{quote}
	Any modality can be treated – due to the Latin etymology of the word ``modality'' – as the way a modalized proposition holds. Term modality comes from the Latin world \textit{modus} which means a way; a way that something happens. 
	
	\,\,\,\,\, An option of non–treating modalities literally is to assume that modalities $\Box$, $\Diamond$ add nothing to the content of propositions modalized by them. For some people it may sound controversial, but \textit{modus} means a way, not a content.
\end{quote}

In the section \ref{sec:5} we formulate, discuss and finally axiomatize these Modal Boolean Connexive Logic systems, which are created by making assumptions about the syncategorematicity of $\Diamond$ and $\Box$.

\section{Closure under multiple negations}
\label{sec:4}

We have found that the property of closure under negation can be generalized to cover all the possible cases. Thus allowing for any number of negation symbols to appear in the antecedent and the consequent of the implication. Symbolically, a property of being closed under negation can be generalized for any number of negations in the following way. Let $\A, \B \in \fBCL$, $k,l,m,n \in \N$, $\R \in \cRel$, then:

\begin{description}
	\item[] $\R(\underbrace{\neg \dots \neg}_{k} \A, \underbrace{\neg \dots \neg}_{l} \B) \Rightarrow \R( \underbrace{\neg \dots \neg}_{m} \A, \underbrace{\neg \dots \neg}_{n} \B)$
\end{description}

\n Let us shorten down the notation by adding a superscript to negation symbol to express the number of occurrences. Thus, $\neg^{k} \A$ would denote the same expression as $\underbrace{\neg \dots \neg}_{k} \A$. This way, the presented property of $\R$ shall have the following form: 

\begin{description}
	\item[$\scGCUN$] $\R(\neg^{k} \A, \neg^{l} \B) \Rightarrow \R( \neg^{m} \A, \neg^{n} \B)$.
\end{description}

Note that $\scGCUN$ can be understood in several ways, e.g. we can fix some four numbers $k$, $l$, $m$, $n \in \N$ and only for them the given property is defined. In this case, with fixed numbers $k$, $l$, $m$, $n \in \N$, we will denote this property as $\scCUNk$.

Another way to understand this is to consider $k$, $l$, $m$, $n$ as variables and add different quantifier configurations as a prefix. Since we strive for generality, we will assume that the property is preceded by a universal quantification, so that the property is to hold for all natural numbers $k$, $l$, $m$, $n$. We will denote this variant of the property by the abbreviation $\scGCUN$.

The problem, however, is that in the general sense of the condition $\scGCUN$, the only relation class that satisfies this condition and the consequential conditions together is the empty class. These conditions are inconsistent. We have the fact:

\begin{fa}\label{gcun condition is inconsistent with connexive conditions} Let $\cRelsub \subseteq \cRel$.  If $\cRelsub$ contains exactly those relations that satisfy $\scGCUN$, $\scBo$,  and $\scBi$ or $\scBii$, $\cRelsub$ is the empty set. 
	
\end{fa}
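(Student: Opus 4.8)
The plan is to exploit the fact that the universally quantified form of $\scGCUN$ permits adding a single negation to just one side of the relation, which is exactly what $\scBo$ forbids once the relation is known to hold for some pair. First I would instantiate $\scGCUN$ at $k = l = m = 0$ and $n = 1$; for arbitrary formulas $\phi, \psi \in \fBCL$ this specialization yields the implication $\R(\phi, \psi) \Rightarrow \R(\phi, \neg \psi)$. Comparing this with $\scBo$, namely $\R(\phi, \psi) \Rightarrow \sim \R(\phi, \neg \psi)$, one sees that as soon as there is even a single pair $\langle \phi, \psi \rangle$ with $\R(\phi, \psi)$, the two conditions jointly force both $\R(\phi, \neg \psi)$ and $\sim \R(\phi, \neg \psi)$, a contradiction.

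The remaining task is therefore to produce a witnessing pair on which the relation is guaranteed to hold, and this is precisely the role played by $\scBi$ (or $\scBii$). In the case of $\scBi$, I would fix any $\A, \B \in \fBCL$ and put $\phi := \A \rightarrow \B$ and $\psi := \neg(\A \rightarrow \neg \B)$; the condition $\scBi$ asserts $\R(\phi, \psi)$ unconditionally. Feeding this pair into the argument of the previous paragraph yields the contradiction, so no relation satisfying all three conditions can exist and hence $\cRelsub = \emptyset$. The case of $\scBii$ is entirely symmetric: one takes instead $\phi := \A \rightarrow \neg \B$ and $\psi := \neg(\A \rightarrow \B)$, and $\scBii$ supplies $\R(\phi, \psi)$, after which the same collision between $\scGCUN$ and $\scBo$ closes the argument.

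There is no genuine obstacle here; the proof is a short instantiation argument. The only point requiring care is recognizing which specialization of the four parameters in $\scGCUN$ produces the minimal conflict with $\scBo$: one must leave the antecedent side untouched ($k = m$) and increase the number of negations on the consequent by exactly one ($l = 0$, $n = 1$), so that the conclusion of $\scGCUN$ matches the negated conclusion of $\scBo$ verbatim. Once that alignment is spotted, $\scBi$ and $\scBii$ serve only to discharge the shared hypothesis $\R(\phi, \psi)$ that both remaining conditions depend on, since $\scBo$ is vacuously satisfiable on its own and $\scGCUN$ becomes dangerous only when the relation is nonempty.
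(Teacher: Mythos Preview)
Your proof is correct and follows essentially the same strategy as the paper's: instantiate $\scGCUN$ so as to add a single negation on the right-hand argument, observe that this directly clashes with $\scBo$, and use $\scBi$ (respectively $\scBii$) to supply the required witnessing pair. The only cosmetic difference is bookkeeping: the paper takes $k=0$, $l=1$, $m=0$, $n=2$ with the substituted formula $B := (A \rightarrow \neg B)$, whereas you take $k=l=m=0$, $n=1$ and absorb the outer negation into $\psi := \neg(A \rightarrow \neg B)$; these are the same instantiation up to where one draws the boundary between the $\neg^{l}$ prefix and the formula it applies to.
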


\begin{proof}
	Let us make the assumptions. For the proof that imposing $\scBi$ on $\cRelsub$ makes the set empty,  in the schema $\scGCUN$ we take $k = 0$, $l = 1$, $m = 0$, $n = 2$,  for $A$:  $(A \rightarrow B)$, and for $B$: $(A \rightarrow \neg B)$. So, we have: 
	
	\[ \R((A \rightarrow B), \neg (A \rightarrow \neg B)) \Rightarrow \R((A \rightarrow B), \neg \neg (A \rightarrow \neg B)).\]
	
	\n But the antecedent of the implication is $\scBi$, so we conclude that $\R((A \rightarrow B), \neg \neg (A \rightarrow \neg B))$. Now we make use of $\scBo$. Instead of $A$ we take $(A \rightarrow B)$, and for $B$: $(A \rightarrow \neg B)$. So, we have: 
	
	\[ \R((A \rightarrow B), \neg (A \rightarrow \neg B)) \Rightarrow \, \sim \R((A \rightarrow B), \neg \neg (A \rightarrow \neg B)).\]
	
	\n Again we detach $\scBi$, and we get $\sim \R((A \rightarrow B), \neg \neg (A \rightarrow \neg B)).$ which is a contradiction. 
	
	For the case of $\scBii$, we assume also $k = 0$, $l = 1$, $m = 0$, $n = 2$ in $\scGCUN$, but substitute  $\scGCUN$ and  $\scBo$ such that we can detach two times $\scBii$. This again leads to contradiction. 
\end{proof}

Thus $\scGCUN$ with $\scAi$, $\scAii$, $\scBo$, $\scBi$, and $\scBii$ result in the  trivial logic (by \ref{ValidityBCL} or  by \ref{ValidityMBCL} in the modal context). The only reasonable option is to use the property $\scCUNk$ for some quadruples that do not lead to contradiction. 
Among a countable set of such quadruples, at least some do not lead to a contradiction. An example is the condition $\scCUN$, which is based on the quadruple: 0, 0, 1, 1. On the other hand, in the proof of fact \ref{gcun condition is inconsistent with connexive conditions}, the quadruple 0, 1, 0, 2 resulted in a contradiction. In the following, we will introduce the axiomatization method for  $\scCUNk$.

For further investigations, let us assume some four natural numbers: $k$, $l$, $m$, $n$. Before we move on to the next axioms, let's consider what would happen if $k = l = m = n$. Another fact tells about this:

\begin{fa}\label{scunk needs no axioms if k=l=m=n} Let $\cRelsub \subseteq \cRel$. Let $k=l=m=n$. If $\cRelsub$ contains exactly those relations that satisfy $\scCUNk$, $\cRelsub = \cRel$.  
	
\end{fa}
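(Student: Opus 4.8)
The plan is to observe that under the hypothesis $k = l = m = n$ the condition $\scCUNk$ degenerates into a logical triviality, so that it constrains no relation whatsoever. First I would spell out $\scCUNk$ as the (universally quantified) implication stating that for all $\A, \B \in \fBCL$,
\[ \R(\neg^{k} \A, \neg^{l} \B) \Rightarrow \R(\neg^{m} \A, \neg^{n} \B). \]
Setting $k = l = m = n$ makes the two iterated negations on each side coincide syntactically: $\neg^{k} \A$ is literally the same expression as $\neg^{m} \A$, and $\neg^{l} \B$ the same as $\neg^{n} \B$. Hence the implication takes the form $\R(\neg^{k} \A, \neg^{k} \B) \Rightarrow \R(\neg^{k} \A, \neg^{k} \B)$, that is $P \Rightarrow P$, which is a metalinguistic tautology holding for every relation $\R \in \cRel$ and every pair of formulas, independently of any property of $\R$.

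From this the conclusion is immediate: since no relation can fail $\scCUNk$ in this case, the class of relations satisfying $\scCUNk$ is the whole of $\cRel$; and as $\cRelsub$ is by hypothesis exactly that class, $\cRelsub = \cRel$, which is the claim. I do not foresee any genuine obstacle here — the only point worth stressing is the syntactic identity of antecedent and consequent, after which the statement is purely propositional. It is perhaps more illuminating to read the fact contrastively: the quadruple $0,0,1,1$ underlying $\scCUN$ yields a substantive closure condition precisely because its antecedent and consequent differ, whereas any diagonal quadruple $k = l = m = n$ equates them and therefore adds nothing to the bare class of relations, explaining why such instances require no axiom.
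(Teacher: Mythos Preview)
Your proof is correct and follows essentially the same approach as the paper: both reduce $\scCUNk$ with $k=l=m=n$ to the trivial implication $\R(\neg^{k}\A,\neg^{k}\B)\Rightarrow\R(\neg^{k}\A,\neg^{k}\B)$ and conclude that every relation in $\cRel$ satisfies it. Your additional remark contrasting this with the non-diagonal quadruple underlying $\scCUN$ is not in the paper's proof, but it is harmless commentary rather than a different argument.
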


\begin{proof} If $k=l=m=n$, then $\scCUNk$ takes the form:
	
	\[\R(\neg^{k} \A, \neg^{k} \B) \Rightarrow \R( \neg^{k} \A, \neg^{k} \B),\]
	
	\n which is an instance of classical tautology, so any binary relation satisfies it, particularly any relation in $\cRel$.  
	
\end{proof}

So, the case when $k=l=m=n$ does not define any new class of connexive relations or  frames.  But what, if the four numbers are not equal? 

So far we are able to provide complete syntactic calculi for classes of models closed under $\scCUNk$ for quadruples of values $k,l,m,n$ satisfying specific conditions. We generalize the strategy developed by Klonowski \cite{Klonowski2}. As already seen, among the axioms of $\BCLCUN$ the ones which allow to express the property of closure under negation (cun) inside the syntax are the pair:

\begin{description}
	\item[$\aCUNi$] $(\A \rightarrow \B) \supset ((\neg \A \rightarrow \neg \B) \vee (\neg \A \wedge \B))$
	\smallskip
	\item[$\aCUNii$] $(\A \rightarrow \B) \supset (\neg \neg \A \rightarrow \neg \neg \B)$
\end{description}

Obviously $\aCUNi$ is an instance of $\scCUNk$ when we take the quadruple of values: $0,0,1,1$. For arbitrary quadruples, it is plausible to expect the schema to generalize to:

\begin{description}
	\item[$\aGCUN$] $(\neg^k \A \rightarrow \neg^l \B) \supset ((\neg^m \A \rightarrow \neg^n \B) \vee (\neg^m \A \wedge \neg^{n+1} \B))$
\end{description}

The situation is less straightforward for a generalization of $\aCUNii$. This axiom expresses the result of a double application of the property $\scCUNk$ Moreover we want said application to preserve the truth-value of the transformed formulas, since they are connected by the connexive arrow, which in order to be true requires also the truth of its respective material implication. Therefore the desired axiom must express an iteration of $\scCUNk$ which preserves the truth-value of each transformed formula.

Let us fix $k,l,m,n \in \mathbb{N}$, assuming that $k \leq m, l \leq n$. We start from a graphical consideration. Property $\scCUNk$ states that when $\neg^k \A$ and $\neg^l \B$ are related so are $\neg^m \A$ and $\neg^n \B$. Let us focus on $A$. The axiom we are looking for should transform $\neg^k \A$ into $\neg^m \A$. Now $\neg^m \A$ is just a graphical variant of $\neg^k \neg^{m-k} \A$, therefore this formula can be transformed again into $\neg^m \neg^{m-k} \A$, which is the same as $\neg^{2m-k} \A$. The same reasoning applies to $\neg^l B$, which after two tranformations becomes $\neg^{2n-l} \B$. We want the iterated tranformation to preseve truth-values and since negation behaves classically in our framework, the only way for $\neg^k \A$ and $\neg^{2m-k} \A$ to have the same truth-values is for $2m-k$ to be even, thefore $k$ has to be even. By the same reasoning $l$ has to be even as well. 

We conclude that accepting the constraints that $k \leq m, l \leq n$, with $k,l$ even, the following axiom is a good candidate for a generalization of $\aCUNii$:

\begin{description}
	\item[$\aGCUNii$] $(\neg^k \A \rightarrow \neg^l \B) \supset (\neg^{2m-k} \A \rightarrow \neg^{2n-l} \B)$
\end{description}

Before we move to the completeness proof, we need to fix some notation. Let us fix $k,l,m,n \in \mathbb{N}$, with $k \leq m, l \leq n$, and $k,l$ even. Let $\cRelBCLGCUN$ denotes the class of all relations that satisfy the $\BCLCUN$ conditions and $\scCUNk$. The class of all models built over those relations will be denoted by $\cModBCL{\cRelBCLGCUN}$.

The axiom system $\acBCLGCUN$ is the Hilbert-style calculus obtained from $\aCPL$, $\aAi$, $\aAii$, $\aBi$, $\aBii$, $\aCUNi$, $\aCUNii$, $\aImp$, $\acDS$, by the addition of schemata $\aGCUN$, $\aGCUNii$ for the same quadruple of values fixed above. 

\begin{tw}[Soundness Theorem for $\acBCLGCUN$]\label{SoundnessBCL}
	For any $\Gamma \subseteq \fBCL, \A \in \fBCL$, $\Gamma \vdash_{\acBCLGCUN} \A$ $\Rightarrow$ $\Gamma \vDash_{\cRelBCLGCUN} \A$.
\end{tw}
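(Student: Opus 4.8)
The plan is to establish soundness in the standard way for Hilbert-style systems: show that every axiom schema is valid in the class $\cRelBCLGCUN$ and that the inference rules preserve validity, then conclude by induction on the length of derivations. Since $\Gamma \vdash_{\acBCLGCUN} \A$ means there is a finite derivation of $\A$ from $\Gamma$ using the axioms and rules, and validity in a class of relations is defined pointwise (Definition \ref{ValidityBCL}), it suffices to verify the base cases (the axioms) and the rule $\acDS$. First I would fix an arbitrary relation $\R \in \cRelBCLGCUN$ together with an arbitrary valuation $\eVal$, giving a model $\M = \langle \eVal, \R \rangle$, and argue that each axiom is true in $\M$; because $\eVal$ and $\R$ are arbitrary this yields validity in $\cRelBCLGCUN$.

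For the axioms inherited from $\acBCL$ and $\acBCLCUN$, the work has essentially already been done: Facts \ref{FactCorr1BCL} and \ref{CorrespondenceFactTwoBCL} guarantee that $\aAi, \aAii, \aBi, \aBii$ are valid in any $\R \in \cRelBCL \supseteq \cRelBCLGCUN$, the schema $\aCPL$ is valid since the Boolean connectives keep their classical truth-conditions (Definition \ref{InterpretationBCL}), and $\aImp$ holds because the truth-condition for $\rightarrow$ explicitly includes the clause $[\M \nvDash \B \text{ or } \M \vDash \C]$, which is exactly the truth-condition for $\A \supset \B$. The validity of $\aCUNi, \aCUNii$ follows from the $\scCUN$ condition, which holds in $\cRelBCLGCUN$ by construction. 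For the rule $\acDS$, I would note that if $\M \vDash \A$ and $\M \vDash \A \supset \B$, then since $\supset$ is material implication (definition $(\supset)$), the Boolean truth-conditions force $\M \vDash \B$; thus $\acDS$ preserves truth in every model and hence preserves validity.

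The genuinely new content is the validity of the two added schemata $\aGCUN$ and $\aGCUNii$, and this is where I expect the main obstacle to lie. For $\aGCUN$, suppose $\M \vDash \neg^k \A \rightarrow \neg^l \B$; by the truth-condition for $\rightarrow$ this gives both the material part $[\M \nvDash \neg^k \A \text{ or } \M \vDash \neg^l \B]$ and the relating part $\R(\neg^k \A, \neg^l \B)$. Applying $\scCUNk$ to the latter yields $\R(\neg^m \A, \neg^n \B)$. The subtle point is the disjunction in the consequent: I must show that either the material implication $\neg^m \A \supset \neg^n \B$ holds (in which case, together with the relating fact just derived, $\M \vDash \neg^m \A \rightarrow \neg^n \B$) or else $\M \vDash \neg^m \A \wedge \neg^{n+1} \B$. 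This is a purely Boolean case analysis on the truth-values of $A$ and $B$ under $\eVal$, tracking how negation parity interacts with the shift from $(k,l)$ to $(m,n)$ and to $(m, n+1)$; the even/parity constraints are not needed here but the bookkeeping of which disjunct is forced must be done carefully.

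For $\aGCUNii$ the hypotheses $k \le m$, $l \le n$ and the parity of $k,l$ become essential. Assuming $\M \vDash \neg^k \A \rightarrow \neg^l \B$, the relating part together with a double application of $\scCUNk$ (as motivated in the graphical discussion preceding the statement, transforming $\neg^k \A$ into $\neg^{2m-k} \A$ and $\neg^l \B$ into $\neg^{2n-l} \B$) produces $\R(\neg^{2m-k} \A, \neg^{2n-l} \B)$. The hard part is the material implication: I must verify $\M \vDash \neg^{2m-k} \A \supset \neg^{2n-l} \B$, and this is exactly where parity is used. Since negation is classical, $\neg^{2m-k} \A$ and $\neg^k \A$ share the same truth-value precisely because $2m - 2k$ is even, i.e. because $2m-k$ and $k$ have the same parity; the assumption that $k$ is even (equivalently, just that $2m-k \equiv k \pmod 2$) guarantees $\M \vDash \neg^{2m-k}\A \Leftrightarrow \M \vDash \neg^k \A$, and symmetrically for $B$ using $l$ even. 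Combining these truth-value equalities with the material part of the hypothesis $[\M \nvDash \neg^k \A \text{ or } \M \vDash \neg^l \B]$ yields the required material implication, and together with the relating fact this gives $\M \vDash \neg^{2m-k} \A \rightarrow \neg^{2n-l} \B$. With all axioms validated and both rules shown to preserve validity, the induction on derivation length completes the proof.
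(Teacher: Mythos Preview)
Your proposal is correct and follows essentially the same route as the paper: direct verification that each axiom schema is true in every model based on a relation in $\cRelBCLGCUN$ and that $\acDS$ preserves truth, with the only substantive work being $\aGCUN$ (handled by the Boolean case split you describe, which the paper phrases contrapositively) and $\aGCUNii$ (handled by a double application of $\scCUNk$ together with the observation that $2(m-k)$ and $2(n-l)$ are even). One small remark: for $\aGCUNii$ the relevant fact is precisely that $(2m-k)-k = 2(m-k)$ is always even, so the parity of $k$ itself is not what drives the truth-value preservation---your parenthetical ``$2m-2k$ is even'' is the real reason, and the clause about ``$k$ even'' is superfluous at this point of the argument.
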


\begin{proof} Let us consider all axiom schemata and the rule contained in $\acBCLGCUN$. 
	
	\textit{Classical Propositional Logic Axioms}. According to the definition \ref{InterpretationBCL}, we got that $\CPL \subseteq \cRelBCL$. Thus, any axiom of $\CPL$ is valid in $\cRelBCL$.
	
	\textit{Disjunctive Syllogism Rule}. Let us assume that $\vDash \B$ and $\vDash \B \supset \C$ for some $\B, \C \in \fBCL$. Then according to definition \ref{InterpretationBCL}, we have that $\eVal(\B) = 1$ and ($\eVal(\B) = 0$ or $\eVal(\C) = 1$). Hence, $\eVal(\C) = 1$, from which it follows that $\vDash \C$.
	
	\textit{$\aAi$}. Let $\M = \langle \eVal, \R \rangle \in \cModBCL{\cRelBCLGCUN}$. Then $\sim \R(\A, \neg \A)$ by $\scAi$. Hence $\M \nvDash \A \to \neg \A$, that is $\M \vDash \neg (\A \to \neg \A)$.
	
	\textit{$\aAii$}. Similar to $\aAi$, using property $\scAii$.
	
	\textit{$\aBi$}. Let us assume that $\M \nvDash (\A \rightarrow \B) \rightarrow \neg (\A \rightarrow \neg \B)$ for some $\M \in \cModBCL{\cRelBCLGCUN}$. According  to definition \ref{InterpretationBCL}, (1) $\M \nvDash (\A \rightarrow \B) \supset \neg (\A \rightarrow \neg \B)$ or (2) $\sim \R((\A \rightarrow \B), \neg (\A \rightarrow \neg \B))$. (2) cannot hold since by lemma $\ref{ClassLemmaBCL}$ $\scBo$ holds. If (1), then $\M \vDash (\A \rightarrow \B)$ and $\M \nvDash \neg (\A \rightarrow \neg \B)$, which implies both $\R(\A, \B)$ and $\R(\A, \neg \B)$, contradicting $\scBi$.
	
	\textit{$\aBii$}. Similar to $\aBi$, using property $\scBii$.
	
	\textit{$\aImp$}. Let us assume that $\vDash \A \rightarrow \B$. Then, due to definition \ref{InterpretationBCL}, $\vDash \neg \A \vee \B$ and $\R(\A, \B)$. According to the abbreviation for material implication, we obtain  $\vDash \A \supset \B$.
	
	\textit{$\aGCUN$}. Let us assume that $\M \nvDash (\neg^{k} \A \rightarrow \neg^{l} \B) \supset ((\neg^{m} \A \rightarrow \neg^{n} \B) \vee (\neg^{m} \A \wedge \neg^{n + 1} \B))$, therefore $\M \vDash (\neg^{k} \A \rightarrow \neg^{l} \B)$ and $\M \nvDash (\neg^{m} \A \rightarrow \neg^{n} \B)$ and $\M \nvDash (\neg^{m} \A \wedge \neg^{n + 1} \B)$. From $\M \nvDash (\neg^{m} \A \rightarrow \neg^{n} \B)$ it follows that either $\M \nvDash (\neg^{m} \A \supset \neg^{n} \B)$ or $\sim \R (\neg^{m} \A, \neg^{n} \B)$. The latter cannot hold, since $\M \vDash (\neg^{k} \A \rightarrow \neg^{l} \B)$ implies $\R (\neg^{m} \A, \neg^{n} \B)$. The former holds iff $\M \vDash \neg^{m} \A$ and $\M \nvDash \neg^{n} \B$, hence $\M \vDash \neg^{n+1} \B$, obtaining a contradiction.
	
	\textit{$\aCUNi$}. Similar to $\aGCUN$.
	
	\textit{$\aGCUNii$}. Let us assume that $\M \vDash \neg^{k} \A \rightarrow \neg^{l} \B$, therefore $\M \vDash (\neg^{k} \A \supset \neg^{l} \B)$ and $\R (\neg^{k} \A, \neg^{l} \B)$. Since $2m-k, 2n-l$, also $\M \vDash (\neg^{k} \A \supset \neg^{l} \B)$. From $\R (\neg^{k} \A, \neg^{l} \B)$ a double application of $\scCUNk$ yelds $\R (\neg^{2m-k} \A, \neg^{2n-l} \B)$, therefore $\M \vDash (\neg^{2m-k} \A \rightarrow \neg^{2n-l} \B)$.
	
	\textit{$\aCUNii$}. Similar to $\aGCUNii$.
\end{proof}

We are going to employ the standard notions of consistent and maximal consistent sets of formulas. Let us denote $\fGen$ as the formulas of some language $\lGen$ and $\acGen$ be some axiom system expressed in the same language. 

\begin{df}[Consistent Set of Formulas]
	Let $\Gamma \subseteq \fBCL$. Then, $\Gamma$ is a $\acGen$-consistent set of formulas iff for some $\A \in \fBCL$, $\Gamma \nvdash_{\acGen} \A$. Otherwise, it is called a $\acGen$-inconsistent set of formulas. 
\end{df}

\begin{df}[Maximal Consistent Set of Formulas]
	Let $\Gamma \subseteq \fGen$. We call $\Gamma$ a maximal $\acGen$-consistent iff both conditions are satisfied: 
	\begin{description}
		\item[(a)] $\Gamma$ is $\acGen$-consistent
		\smallskip
		\item[(b)] for any $\Delta \subseteq \fGen$, $\Gamma \subset \Delta \Rightarrow \Delta$ is $\acGen$-inconsistent.
	\end{description}
\end{df}

We will denote the class of all maximal $\acGen$-consistent sets of formulas as $\cGenMax$. Analogous notation will be used for $\BCL$ and $\MBCL$. 

We can now proceed and provide  a proof for completeness using the modified notion of canonical model for $\BCL$ defined by Klonowski in (see \cite{Klonowski2}, p. 529). We are going to build two canonical models, such that they make true the same formulas, but only the second of them is guaranteed to belong to the desired class of models.

\begin{df}[First Canonical Model for $\acBCLGCUN$]\label{CanonicalModelBCL}
	
	Let $\Gamma \in \cBCLGCUNMax$, the first canonical model generated by $\Gamma$ (in short $\Gamma$-model) is a pair $\langle \eVal, \R \rangle$ such that, for any $\A, \B \in \fBCL$:
	\begin{description}
		\item[(a)] $\eVal(p) = 1 \Leftrightarrow p \in \Gamma$, for $p \in \Var$
		\smallskip
		\item[(b)] $\R(\A,\B) \Leftrightarrow \A \rightarrow \B \in \Gamma$
	\end{description}
\end{df}

\begin{lm}\label{CanonicalLemmaBCL}
	Let $\Gamma \in \cBCLGCUNMax$ and $\M$ be a $\Gamma$-model. Then, for all $\A \in \fBCL$, $\M \vDash \A \Leftrightarrow \A \in \Gamma$.
\end{lm}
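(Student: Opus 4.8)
The plan is to prove the Truth Lemma by induction on the complexity of $\A$, following the standard Henkin-style argument adapted to the relating semantics of $\BCL$. The canonical model $\M = \langle \eVal, \R \rangle$ is defined so that $\eVal(p)=1$ iff $p\in\Gamma$ and $\R(\A,\B)$ iff $(\A\rightarrow\B)\in\Gamma$, so the base case for variables is immediate from clause (a) of Definition \ref{CanonicalModelBCL} together with the $(\Var)$ clause of Definition \ref{InterpretationBCL}.

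First I would record the properties of maximal consistent sets that the induction relies on, all of which follow from the presence of $\aCPL$ and the rule $\acDS$ (hence the derived $\aDS$): for a maximal $\acBCLGCUN$-consistent $\Gamma$ we have $\neg\B\in\Gamma \Leftrightarrow \B\notin\Gamma$; $\B\wedge\C\in\Gamma \Leftrightarrow \B\in\Gamma$ and $\C\in\Gamma$; and $\B\vee\C\in\Gamma \Leftrightarrow \B\in\Gamma$ or $\C\in\Gamma$. These are the usual facts about maximal consistent sets containing all classical tautologies and closed under $\aDS$, so I would cite them rather than grind through them. Using these, the inductive steps for $\neg$, $\wedge$, and $\vee$ are routine: for instance, $\M\vDash\neg\B$ iff $\M\nvDash\B$ iff (by IH) $\B\notin\Gamma$ iff $\neg\B\in\Gamma$.

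The interesting case is the connexive implication $\A=\B\rightarrow\C$. By the truth-condition $(\rightarrow)$, $\M\vDash \B\rightarrow\C$ iff $[\M\nvDash\B$ or $\M\vDash\C]$ and $\R(\B,\C)$. By the canonical definition of $\R$, the second conjunct $\R(\B,\C)$ is by definition equivalent to $(\B\rightarrow\C)\in\Gamma$. So the task reduces to showing that the first conjunct is automatically implied by $(\B\rightarrow\C)\in\Gamma$, given the IH on $\B$ and $\C$. For the forward direction, if $\M\vDash\B\rightarrow\C$ then $\R(\B,\C)$ gives $(\B\rightarrow\C)\in\Gamma$ directly. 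For the backward direction, suppose $(\B\rightarrow\C)\in\Gamma$. Then $\R(\B,\C)$ holds by definition, supplying the second conjunct; for the first conjunct I would use axiom $\aImp$, namely $(\B\rightarrow\C)\supset(\B\supset\C)$, together with $\aDS$ to obtain $(\B\supset\C)\in\Gamma$, i.e. $(\neg\B\vee\C)\in\Gamma$. By the disjunction property and the IH, this yields $\M\nvDash\B$ or $\M\vDash\C$, completing the first conjunct. Hence $\M\vDash\B\rightarrow\C$.

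The main obstacle, and the only genuinely non-trivial point, is precisely this interplay in the implication case: the truth-condition for $\rightarrow$ is a conjunction of an extensional part and the relational part, whereas membership in $\Gamma$ only directly encodes the relation via the definition of $\R$. The axiom $\aImp$ is exactly what bridges the gap, guaranteeing that whenever $(\B\rightarrow\C)\in\Gamma$ the associated material implication is also in $\Gamma$, so that the extensional conjunct of the truth-condition is secured. It is worth emphasizing that this lemma uses none of the $\scCUNk$-specific axioms $\aGCUN$, $\aGCUNii$ (nor $\aCUNi$, $\aCUNii$); those axioms play no role here because $\R$ is defined to mirror $\Gamma$ verbatim. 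Their job is instead to guarantee, in the \emph{second} canonical model, that the relation lands in the class $\cRelBCLGCUN$ — which is why the excerpt announces two canonical models making the same formulas true. Thus the present lemma is established purely by the $\aCPL$/$\acDS$/$\aImp$ core of the calculus.
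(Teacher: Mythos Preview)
Your proof is correct and follows essentially the same route as the paper: induction on complexity, the Boolean cases handled by standard properties of maximal consistent sets, and the $\rightarrow$ case split into the forward direction (immediate from the canonical definition of $\R$) and the backward direction (using $\aImp$ to extract the material implication and then the inductive hypothesis). One small slip: when detaching $(\B\supset\C)$ from $\aImp$ you need $\acDS$ (modus ponens for $\supset$), not the derived $\aDS$ for $\rightarrow$; otherwise your argument matches the paper's exactly, including the observation that the $\aGCUN/\aGCUNii$ axioms play no role here.
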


\begin{proof}
	Assume all the hypotheses. We will use induction over the complexity of formulas.
	
	\textit{Base case}. Let $\A \in \Var$. Then $\M \vDash \A \Leftrightarrow \eVal(\A) = 1$. By definition \ref{CanonicalModelBCL}, we obtain that it is equivalent to $\A \in \Gamma$.
	
	\textit{Inductive hypothesis}. Let $\oNum \in \N$. Let us suppose that the theorem is true for all the formulas whose complexity is lesser or equal than $\oNum$.
	
	\textit{Inductive step}. Let us assume that the formula $\A$ has complexity equal to $\oNum + 1$. Thus, the possible cases are $\A = \neg \B$, $\A = \B \wedge \C$, $\A = \B \vee \C$ and $\A = \B \rightarrow \C$ for some $\B, \C \in \fBCL$. Due to the basic properties of maximal consistent sets and definition $\ref{CanonicalModelBCL}$ we can easily prove the theorem for complex formulas constructed using classical connectives. 
	
	\textit{Let $\A = \B \rightarrow \C$.} `\textit{$\Rightarrow$}'. Suppose that $\M \vDash \B \rightarrow \C$. Then, by the definition \ref{InterpretationBCL}, we obtain ($\M \nvDash \B$ or $\M \vDash \C$) and $\R(\B,\C)$. By the definition \ref{CanonicalModelBCL} and from the fact that $\R(\B,\C)$, we obtain $\B \rightarrow \C \in \Gamma$.
	
	`\textit{$\Leftarrow$}'. Suppose that $\B \rightarrow \C \in \Gamma$. Thus, by $\aImp$ and the usual properties of a maximal consistent set, we obtain $\B \notin \Gamma$ or $\C \in \Gamma$. Hence, by the inductive hypothesis, $\M \nvDash \B$ or $\M \vDash \C$. Moreover by the definition \ref{CanonicalModelBCL}, $\B \rightarrow \C \in \Gamma$ implies $\R(\B, \C)$. By definition \ref{InterpretationBCL}, we conclude $\M \vDash \B \rightarrow \C$.
\end{proof}

It is essential to the completeness proof for the canonical model to belong to the class $\cModBCL{\cRelBCLGCUN}$. It is for this reason that we introduce a second canonical model\footnote{Condition (c) of definition \ref{CanonicalModelBCLsecond} can be redundant for certain choices of $k,l,m,n$, but we consider the general case here.}.

\begin{df}[Second Canonical Model for $\acBCLGCUN$]\label{CanonicalModelBCLsecond}
	
	Let $\Gamma \in \cBCLGCUNMax$ and $\M = \langle \eVal_\M, \R \rangle$ be a $\Gamma$-model. Let $\R^\neg \subseteq \fBCL \times \fBCL$ be the least relation $\rho$ such that:
	\begin{description}
		\item[(a)] $\M \vDash \A \rightarrow \B \Rightarrow \rho (\A, \B)$
		\smallskip
		\item[(b)] $\M \vDash \neg^{k} \A \rightarrow \neg^{l} \B$ and $\M \vDash \neg^{m} \A \wedge \neg^{n + 1} \B \Rightarrow \rho (\neg^{m} \A, \neg^{n} \B)$
		\smallskip
		\item[(c)] $\M \vDash \A \rightarrow \B$ and $\M \vDash \neg \A \wedge \B \Rightarrow \rho (\neg\A, \neg\B)$
	\end{description}
	
	\noindent The second canonical model generated by $\Gamma$ (in short $\Gamma^\neg$-model) is the pair $\M^\neg = \langle \eVal_\M, \R^\neg \rangle$.
\end{df}

\begin{lm}\label{can_mod1-2LemmaBCL}
	For $\Gamma \in \cBCLGCUNMax$, let $\M = \langle \eVal, \R \rangle$ be a $\Gamma$-model and $\M^\neg = \langle \eVal, \R^\neg \rangle$ be its $\Gamma^\neg$-model. Then for all $\A \in \fBCL, \M \vDash \A \Leftrightarrow \M^\neg \vDash \A$.
\end{lm}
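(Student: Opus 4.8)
The plan is to prove the biconditional by induction on the complexity of $\A$, exploiting the fact that $\M$ and $\M^\neg$ share the same valuation $\eVal$ and differ only in their relating relations $\R$ and $\R^\neg$. Consequently the truth clauses of Definition \ref{InterpretationBCL} for variables and for the Boolean connectives $\neg, \wedge, \vee$ never consult the relation, so the base case is immediate and the inductive steps for $\neg \B$, $\B \wedge \C$, $\B \vee \C$ follow at once from the inductive hypothesis applied to the immediate subformulas. The entire content of the lemma is therefore concentrated in the case $\A = \B \rightarrow \C$, the only clause sensitive to the choice of relation.

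Before attacking that case I would record an explicit description of $\R^\neg$. Each of the generating clauses (a), (b), (c) of Definition \ref{CanonicalModelBCLsecond} has a premise formulated purely in terms of $\M$ and never refers to $\rho$ itself; hence the clauses do not chain, and the least relation closed under them is simply the union of the three sets of pairs they force. In particular, every pair $\R^\neg(\B, \C)$ is witnessed by at least one instance of (a), (b) or (c) whose $\M$-premise holds. This structural fact is what will let me case-split on the origin of an arbitrary pair of $\R^\neg$.

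For the implication case the direction $\M \vDash \B \rightarrow \C \Rightarrow \M^\neg \vDash \B \rightarrow \C$ is the easy one: from $\M \vDash \B \rightarrow \C$ clause (a) immediately yields $\R^\neg(\B, \C)$, while the extensional part $[\M \nvDash \B$ or $\M \vDash \C]$ transfers to $\M^\neg$ through the inductive hypothesis, so $\M^\neg \vDash \B \rightarrow \C$. The converse is where the work lies, and it is the step I expect to be the main obstacle. Assuming $\M^\neg \vDash \B \rightarrow \C$, the inductive hypothesis first transfers the extensional part back to $\M$, giving $[\M \nvDash \B$ or $\M \vDash \C]$; it then remains to secure $\R(\B, \C)$. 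Here I would invoke the description of $\R^\neg$ and split on which clause witnessed $\R^\neg(\B, \C)$. If it is clause (a) we are done, since $\M \vDash \B \rightarrow \C$ already delivers $\R(\B, \C)$. The crux is to rule out clauses (b) and (c): in each of these the adjoined pair has the shape of an implication whose antecedent is verified and whose consequent is falsified in $\M$ — for (b), $\M \vDash \neg^{m}\A$ together with $\M \vDash \neg^{n+1}\B$ forces $\M \nvDash \neg^{n}\B$, and symmetrically for (c) — so the extensional part of $\B \rightarrow \C$ would fail in $\M$, contradicting what the inductive hypothesis just established. Thus only clause (a) can be the witness, $\R(\B, \C)$ follows, and with the extensional part already in hand we conclude $\M \vDash \B \rightarrow \C$.

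The single idea doing the heavy lifting, and the point I would stress, is that the pairs adjoined to $\R$ by clauses (b) and (c) are exactly those for which the associated relating implication is already false on Boolean grounds; adding them to the relation can therefore never make a newly true implication, which is precisely why enlarging $\R$ to $\R^\neg$ leaves every formula's truth value untouched.
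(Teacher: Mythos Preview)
Your proof is correct and follows essentially the same route as the paper: induction on complexity, with the only substantive case being $\A = \B \rightarrow \C$, where the forward direction uses clause~(a) of Definition~\ref{CanonicalModelBCLsecond} and the backward direction case-splits on the witness for $\R^\neg(\B,\C)$, ruling out the non-(a) clauses because the pairs they adjoin make the material component of the arrow fail in $\M$. If anything your treatment is slightly more complete, since you explicitly handle clause~(c) as well as~(b), whereas the paper's argument mentions only the two cases~(a) and~(b); the missing case~(c) is dispatched by exactly the same Boolean observation you give.
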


\begin{proof}
	We proceed by induction over the complexity of formulas. Since the two models share the same valuation $\eVal$, the only non-trivial case is the one concerning the connexive arrow. 
	
	\textit{Let $\A = \B \rightarrow \C$.} `\textit{$\Rightarrow$}'. Suppose that $\M \vDash \B \rightarrow \C$. Then, by definition \ref{InterpretationBCL}, we obtain ($\M \nvDash \B$ or $\M \vDash \C$) and $\R(\B,\C)$. By induction hypothesis, $\M^\neg \nvDash \B$ or $\M^\neg \vDash \C$. By definition \ref{CanonicalModelBCLsecond}, $\M \vDash \B \rightarrow \C$ implies $\R^\neg(\B,\C)$. We obtain $\M^\neg \vdash \B \rightarrow \C$.
	
	`\textit{$\Leftarrow$}'. Suppose that $\M^\neg \vDash \B \rightarrow \C$. Then, by definition \ref{InterpretationBCL}, we obtain ($\M^\neg \nvDash \B$ or $\M^\neg \vDash \C$) and $\R^\neg (\B,\C)$. Now, by definition \ref{CanonicalModelBCLsecond} $\R^\neg (\B,\C)$ can hold only in two cases: either (a) $\M \vDash \B \rightarrow \C$ or (b) $\B = \neg^m \B_1, \C = \neg^n \C_1, \M \vDash \neg^k \B_1 \to \neg^l \C_1$ and $\M \vDash \neg^m \B_1 \land \neg^{n+1} \C_1$. If (a) holds, we are done. If (b) is the case, since $\M^\neg \nvDash \B$ or $\M^\neg \vDash \C$, it is the same as $\M^\neg \nvDash \neg^k \B_1$ or $\M^\neg \vDash \neg^l \C_1$, and by induction hypothesis $\M \nvDash \neg^k \B_1$ or $\M \vDash \neg^l \C_1$, which contradicts $\M \vDash \neg^m \B_1 \land \neg^{n+1} \C_1$. Therefore (a) must hold.
\end{proof}

\begin{lm}\label{ClassLemmaBCL}
	For $\Gamma \in \cBCLGCUNMax$, let  $\M^\neg = \langle \eVal, \R^\neg \rangle$ be a $\Gamma^\neg$-model. Then $\M^\neg \in \cModBCL{\cRelBCLGCUN}$.
\end{lm}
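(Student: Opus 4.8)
The plan is to verify directly that $\R^\neg$ meets every condition defining the class $\cRelBCLGCUN$, namely $\scAi$, $\scAii$, $\scBo$, $\scBi$, $\scBii$, $\scCUN$ and $\scCUNk$; together with Lemma~\ref{can_mod1-2LemmaBCL} this places $\M^\neg$ in $\cModBCL{\cRelBCLGCUN}$. I would lean on two structural facts throughout. First, since $\R^\neg$ is the \emph{least} relation closed under conditions (a)--(c) of Definition~\ref{CanonicalModelBCLsecond} and none of those conditions refers to $\rho$ on the right, $\R^\neg$ is exactly the union of the pairs generated by (a), (b) and (c); so to decide whether a given pair lies in $\R^\neg$ it suffices to check whether some clause produced it. Second, by Lemma~\ref{CanonicalLemmaBCL} a formula is true in $\M$ iff it lies in $\Gamma$, hence every axiom instance of $\acBCLGCUN$ is true in $\M$. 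A useful auxiliary observation is the \emph{truth signature} of the patch clauses: every pair added by (b) or (c) has first component true and second component false in $\M$ (for (b), $\M\vDash\neg^m\A$ and $\M\vDash\neg^{n+1}\B$, i.e.\ $\M\nvDash\neg^n\B$; likewise for (c)). This sharply restricts, for any target pair, which of (a)--(c) could possibly have produced it.

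I would first dispatch the inclusion conditions. For $\scBi$ and $\scBii$ the relevant instances of $\aBi$, $\aBii$ are theorems, hence true in $\M$, so clause (a) already puts $\R^\neg(\A\to\B,\neg(\A\to\neg\B))$ and its $\scBii$-analogue into $\R^\neg$. The exclusion conditions $\scAi$, $\scAii$ are handled by ruling out each clause: clause (a) cannot yield $(\A,\neg\A)$ because $\aAi$ forces $\A\to\neg\A\notin\Gamma$, while clauses (b) and (c) cannot, since a pair of the form $(\A,\neg\A)$ cannot simultaneously carry the required truth signature and the syntactic shape $(\neg^m\X,\neg^n\Y)$ or $(\neg\X,\neg\Y)$. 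For $\scCUN$ and $\scCUNk$ I would argue by cases on the clause that generated the antecedent pair. When it comes from (a) the situation is clean: from $\M\vDash\A\to\B$ the axiom $\aCUNi$ gives either $\M\vDash\neg\A\to\neg\B$ (target via (a)) or $\M\vDash\neg\A\wedge\B$ (target via (c)), and symmetrically $\aGCUN$ routes the $\scCUNk$-target either through (a) or through the patch clause (b); the truth-preserving axioms $\aCUNii$, $\aGCUNii$ cover the corresponding two-step cases, exactly as in the soundness proof.

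The hard part, which I expect to be the crux, is $\scCUN$ and $\scCUNk$ when the antecedent pair is itself one of the patched pairs produced by clause (b). The obstacle is that conditions (a)--(c) do not compose: a clause-(b) pair $(\neg^m\A,\neg^n\B)$ records a relating fact whose underlying implication $\neg^m\A\to\neg^n\B$ is materially false in $\M$ (antecedent true, consequent false), hence is \emph{not} in $\Gamma$, so I cannot simply re-apply an axiom to it. By the truth-signature remark the required target pair $(\neg^{m+1}\A,\neg^{n+1}\B)$ has first component false and second true, so it can only be supplied by clause (a); that is, I must exhibit the corresponding implication inside $\Gamma$ directly from the data that fired clause (b). Here the standing constraints are essential: because $k,l$ are even and $k\le m$, $l\le n$, the truth-value-preserving schemata $\aCUNii$ and $\aGCUNii$ shift the negation counts by admissible even amounts, and I would try to reach the target exponents by composing only these escape-free (implication, not disjunction) consequences. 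Making this reachability argument go through for every firing quadruple---and confirming that the quadruples for which it cannot be made to work are precisely those already rendered inconsistent by Fact~\ref{gcun condition is inconsistent with connexive conditions}, so that $\cBCLGCUNMax=\emptyset$ and the statement then holds vacuously---is the step I would budget the most effort for. The remaining condition $\scBo$ is similar in spirit: assuming $\R^\neg(\A,\B)$ and $\R^\neg(\A,\neg\B)$, the all-(a) case contradicts the theorem $\neg((\A\to\B)\wedge(\A\to\neg\B))$ (obtained from $\aBi$ and $\aImp$), and the mixed and patch cases are excluded by comparing truth signatures, so the genuine difficulty is again confined to the compositional clause-(b) bookkeeping.
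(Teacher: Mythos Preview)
Your plan is essentially the paper's own: case-split on which clause (a), (b) or (c) of Definition~\ref{CanonicalModelBCLsecond} produced the antecedent pair, and feed in the matching axiom ($\aGCUN$ or $\aCUNi$ for clause-(a) pairs, $\aGCUNii$ or $\aCUNii$ for clause-(b)/(c) pairs). Your ``truth signature'' observation and your treatment of the inclusion/exclusion properties are in the same spirit; the paper in fact defers $\scAi$, $\scAii$, $\scBo$, $\scBi$, $\scBii$ and $\scCUN$ to Klonowski's Proposition~4.11, doing only $\scCUNk$ in detail.

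Where you overestimate the difficulty is the clause-(b) case of $\scCUNk$. You describe this as a reachability problem requiring composition of several even shifts; in the paper it is a single application of $\aGCUNii$ plus a syntactic identity. Concretely: if $\R^\neg(\neg^k\A,\neg^l\B)$ comes from clause~(b), then by definition there are $\A_1,\B_1$ with $\neg^k\A=\neg^m\A_1$, $\neg^l\B=\neg^n\B_1$ and $\M\vDash\neg^k\A_1\to\neg^l\B_1$. One use of $\aGCUNii$ gives $\M\vDash\neg^{2m-k}\A_1\to\neg^{2n-l}\B_1$. But $\neg^k\A=\neg^m\A_1$ means $\A=\neg^{m-k}\A_1$, hence $\neg^m\A=\neg^{2m-k}\A_1$; likewise $\neg^n\B=\neg^{2n-l}\B_1$. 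So $\M\vDash\neg^m\A\to\neg^n\B$, and clause~(a) delivers $\R^\neg(\neg^m\A,\neg^n\B)$ directly. No iterated reachability is needed, and the inconsistent-quadruple fallback you mention never enters. The analogous step for $\scCUN$ with clause~(c) uses $\aCUNii$ in the same way. Once you see this one-line arithmetic, the ``hard part'' you flagged disappears.
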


\begin{proof}
	Let $\M = \langle \eVal, \R \rangle$ be the $\Gamma$-model from which $\M^\neg$ is built. Assume $\R^\neg(\neg^{k} \A, \neg^{l} \B)$. By definition \ref{CanonicalModelBCLsecond}, this holds only in two cases: either (a) $\M \vDash \neg^k \A \rightarrow \neg^l \B$ or (b) $\neg^k \A = \neg^m \A_1, \neg^l \B = \neg^n \B_1, \M \vDash \neg^k \A_1 \to \neg^l \B_1$ and $\M \vDash \neg^m \A_1 \land \neg^{n+1} \B_1$. 
	
	If (a) holds, by properties of maximal consistent sets, $\M \vDash \neg^k \A \rightarrow \neg^l \B$ implies $\M \vDash (\neg^m \A \rightarrow \neg^n \B) \lor (\neg^m \A \land \neg^{n+1} \B)$ by $\aGCUN$. Therefore by the same properties, $\M \vDash \neg^m \A \rightarrow \neg^n \B$ or $\M \vDash \neg^m \A \land \neg^{n+1} \B$. In case $\M \vDash \neg^m \A \rightarrow \neg^n \B$, by definition \ref{CanonicalModelBCLsecond}, this implies $\R^\neg (\neg^m \A, \neg^n \B)$. If $\M \vDash \neg^m \A \land \neg^{n+1} \B$, by the same definition and $\M \vDash \neg^k \A \rightarrow \neg^l \B$ it follows $\R^\neg (\neg^m \A, \neg^n \B)$.
	
	If (b) is the case, $\M \vDash \neg^k \A_1 \to \neg^l \B_1$ and $\aGCUNii$ imply $\M \vDash \neg^{2m-k} \A_1 \to \neg^{2n-l} \B_1$. Notice that $\neg^{2m-k} \A_1 = \neg^{m-k} \neg^m \A_1 = \neg^{m-k} \neg^k \A = \neg^m \A$ and $\neg^{2n-l} \B_1 = \neg^{n-l} \neg^n \B_1 = \neg^{n-l} \neg^l \B = \neg^n \B$. Hence $\M \vDash \neg^{m} \A \to \neg^{n} \B$, which by definition \ref{CanonicalModelBCLsecond} allow us to conclude $\R^\neg (\neg^m \A, \neg^n \B)$.
	
	For all the other properties, the proof follows \cite{Klonowski2}, proposition 4.11. In particular, that $\M^\neg$ is closed under negation per $\scCUN$ is obtained reasoning as above, using clause (c) of definition \ref{CanonicalModelBCLsecond} with theorems $\aCUNi,\aCUNii$.
\end{proof}

The second part of the last theorem, which concerns all the connexive properties, is working in particular thanks to the closure $\scCUN$. This motivates the decision to include $\aCUNi, \aCUNii$ in the axiomatization. There is more left to explore here. At least, two directions can be considered. We can study classes of models in which the connexive laws stills hold but such that $\scCUN$ is no longer guaranteed. In that sense the focus is on the interconnection between the properties characterizing connexivity and the iteration of negations, to see how far this can go while preserving both connexive laws and consistency. A diverging path is the one leading to the more general framework of relating semantics. In that sense, the main interest would shift over the way of characterizing classes of models closed under multiple negations, leaving aside connexivity and studying all the possible choices of values for $\scCUNk$. Both suggetions will be left for future works.

Returning to the main proof, we have shown that $\M$ and $\M^\neg$ behaves semantically in the same way. Moreover $\M^\neg \in \cModBCL\cRelBCLGCUN$. Therefore we can work inside $\M$ and be sure that there exists another model in the desired class which satisfies precisely the same formulas.

\begin{tw}[Completeness Theorem for $\acBCLGCUN$]
	
	For any $\Gamma \subseteq \fBCL, \A \in \fBCL$, $\Gamma \vDash_{\cRelBCLGCUN} \A$ $\Rightarrow$ $\Gamma \vdash_{\acBCLGCUN} \A$.
\end{tw}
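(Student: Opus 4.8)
The plan is to prove completeness in its contrapositive and strong form by the standard Lindenbaum--Henkin method, reducing it to the three lemmas already established. First I would reduce the statement about arbitrary $\Gamma$ to the special case of theoremhood. Assume $\Gamma \nvdash_{\acBCLGCUN} \A$; the goal is to produce a model in $\cModBCL{\cRelBCLGCUN}$ and a point where every formula of $\Gamma$ is true but $\A$ is false. Using $\aImp$, $\acDS$, and the presence of all classical tautologies, the consequence relation behaves like a classical one over the Boolean skeleton, so I would first argue that $\Gamma \nvdash \A$ allows us to build a set $\Gamma'$ that contains $\Gamma$, is $\acBCLGCUN$-consistent, and with respect to which $\A$ is still underivable (e.g.\ by adjoining $\neg\A$ in the material sense, or by keeping $\A$ out of the maximal extension).

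Next I would invoke Lindenbaum's Lemma to extend $\Gamma'$ to a maximal $\acBCLGCUN$-consistent set $\Gamma^{\ast} \in \cBCLGCUNMax$ with $\Gamma \subseteq \Gamma^{\ast}$ and $\A \notin \Gamma^{\ast}$. This step is routine given the closure properties of the calculus and would follow the pattern of \cite{Klonowski2}; the only thing to check is that the classical backbone plus $\acDS$ suffices to run the usual enumeration-and-extension argument, which it does because $\aCPL$ and $\acDS$ already yield full classical propositional reasoning over the material connectives.

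The heart of the proof is then a direct appeal to the three lemmas proved above. From $\Gamma^{\ast}$ I would form the first canonical model $\M = \langle \eVal, \R \rangle$ of Definition~\ref{CanonicalModelBCL}. By Lemma~\ref{CanonicalLemmaBCL} we have $\M \vDash \B \Leftrightarrow \B \in \Gamma^{\ast}$ for every $\B \in \fBCL$, so in particular $\M \vDash \C$ for all $\C \in \Gamma$ while $\M \nvDash \A$. The obstacle is that $\M$ need not lie in $\cModBCL{\cRelBCLGCUN}$, since its relation $\R$ reflects membership of implications in $\Gamma^{\ast}$ and carries no guarantee of satisfying $\scCUNk$ or the connexive conditions. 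This is precisely what the second canonical model repairs: I would pass to $\M^\neg = \langle \eVal, \R^\neg \rangle$ of Definition~\ref{CanonicalModelBCLsecond}. Lemma~\ref{can_mod1-2LemmaBCL} gives $\M \vDash \B \Leftrightarrow \M^\neg \vDash \B$ for all $\B$, so $\M^\neg$ still verifies every formula of $\Gamma$ and falsifies $\A$, and Lemma~\ref{ClassLemmaBCL} certifies that $\M^\neg \in \cModBCL{\cRelBCLGCUN}$.

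Combining these, $\M^\neg$ is a model in the target class with $\M^\neg \vDash \C$ for each $\C \in \Gamma$ and $\M^\neg \nvDash \A$, which witnesses $\Gamma \nvDash_{\cRelBCLGCUN} \A$ and completes the contrapositive. The main substantive work has already been absorbed into Lemmas~\ref{can_mod1-2LemmaBCL} and~\ref{ClassLemmaBCL}; the step I would watch most carefully is the reduction of the $\Gamma$-relative statement to a single maximal consistent set, making sure the treatment of $\Gamma$ as hypotheses (as opposed to pure theoremhood) interacts correctly with the canonical valuation clause $\eVal(p)=1 \Leftrightarrow p \in \Gamma^{\ast}$ and with the deduction-theorem-style manipulations licensed by $\aImp$ and $\acDS$.
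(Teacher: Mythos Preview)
Your proposal is correct and follows essentially the same route as the paper: contrapose, extend $\Gamma \cup \{\neg \A\}$ to a maximal consistent set via Lindenbaum, build the first canonical model and invoke Lemma~\ref{CanonicalLemmaBCL}, then pass to the second canonical model using Lemmas~\ref{can_mod1-2LemmaBCL} and~\ref{ClassLemmaBCL}. The paper's version is terser about the reduction step (it simply asserts that $\Gamma \cup \{\neg \A\}$ is consistent), but otherwise the structure is identical.
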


\begin{proof}
	By contraposition, let us assume that $\Gamma \nvdash_{\acBCLGCUN} \A$. Then $\Gamma \cup \{ \neg \A \}$ is consistent. According to the Lindebaum's lemma, there exists a maximal consistent extension of this set $\Delta$. According to lemma \ref{CanonicalLemmaBCL}, there exists a first canonical model $\M$ for which $\M \vDash P$ iff $P \in \Delta$, Thus, $\M \nvDash \A$ and, for all $\B \in \Gamma, \M \vDash \B$. By \ref{can_mod1-2LemmaBCL}, also the second canonical model $\M^\neg$ based on $\M$ is such that $\M^\neg \nvDash \A$ and, for all $\B \in \Gamma, \M^\neg \vDash \B$. Finally $\M^\neg \in \cModBCL{\cRelBCLGCUN}$ by \ref{ClassLemmaBCL}. Therefore $\Gamma \nvDash_{\cRelBCLGCUN} \A$.
\end{proof}

Analogous results can be achieved for $\MBCL$ closed under multiple negations. Due to the fact that the semantics is more complicated, there is a need of introducing a new notion of a canonical model. As well as in the previous case, Klonowski defined the notion of a canonical model for $\MBCL$ in (see \cite{Klonowski2}, p. 532), which we will modify. 

Symbolically, the property of being closed under negation for $\MBCL$ looks very similar to the corresponding property for $\BCL$. The change is within the subscript of $\R$, which is related to some possible world $\oPos \in \cPos$. Let $\A, \B \in \fMBCL$, $k,l,m,n \in \N$, $\oPos \in \cPos$, and $\R_{\oPos} \in \cRel$, then, with a slight abuse of notation: 

\begin{description}
	\item[$\scGCUN$] $\R_{\oPos}(\neg^{k} \A, \neg^{l} \B) \Rightarrow \R_{\oPos}(\neg^{m} \A, \neg^{n} \B)$
\end{description}

Let us denote the class of all the combined frames whose relations satisfy the $\MBCLCUN$ conditions and $\scGCUN$ as $\cRelMBCLGCUN$. The class of all models built over those  frames will be denoted by $\cModMBCL{\cRelMBCLGCUN}$. The property can be expressed in the syntax by the schemata already introduced, $\aGCUN, \aGCUNii$. The axiom system $\acMBCLGCUN$ is the Hilbert-style calculus obtained from $\acBCLGCUN$ adding the modal schemata $\aDual$, $\aKm$, and the rule $\anec$.

\begin{tw}[Soundness Theorem for $\acMBCLGCUN$]\label{SoundnessMBCLGCUN}
	For any $\Gamma \subseteq \fMBCL, \A \in \fMBCL$, $\Gamma \vdash_{\acMBCLGCUN} \A \Rightarrow$  $\Gamma \vDash_{\cRelMBCLGCUN} \A$.
\end{tw}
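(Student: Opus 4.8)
The plan is to prove soundness for $\acMBCLGCUN$ by the standard route: show that every axiom schema is valid in the class $\cRelMBCLGCUN$ and that every inference rule preserves validity, then conclude by induction on the length of derivations. Since $\acMBCLGCUN$ is obtained from $\acBCLGCUN$ by adding the modal schemata $\aDual$, $\aKm$ and the rule $\anec$, the bulk of the work has already been carried out in the proof of Theorem~\ref{SoundnessBCL}. Indeed, the verifications for $\aCPL$, $\acDS$, $\aAi$, $\aAii$, $\aBi$, $\aBii$, $\aImp$, $\aGCUN$, $\aGCUNii$, $\aCUNi$, $\aCUNii$ transfer almost verbatim to the modal setting, the only change being that the truth-conditions of Definition~\ref{InterpretationMBCL} are now evaluated at a point $\oPos$ of a model $\M = \langle \cPos, \rAvail, \cRModal, \eVal \rangle$, and the relation invoked is the world-indexed $\RModal$ rather than the global $\R$. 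Because the connexive conditions $\scAi$, $\scAii$, $\scBo$, $\scBi$, $\scBii$ and the iteration condition $\scGCUN$ hold at each $\RModal$ by the definition of $\cRelMBCLGCUN$, the same local arguments go through pointwise.

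First I would dispatch the non-modal axioms and the rule $\acDS$ by citing the corresponding cases of Theorem~\ref{SoundnessBCL}, adjusted to read $\M, \oPos \vDash$ in place of $\M \vDash$ and $\RModal$ in place of $\R$; no genuinely new idea is needed there. Then I would treat the three new modal ingredients. For $\aDual$, $\Diamond \A \equiv \neg \Box \neg \A$, I would unfold the abbreviation for $\equiv$ and use the clauses $(\Box)$ and $(\Diamond)$ of Definition~\ref{InterpretationMBCL} together with the clause $(\neg)$ to show both material implications hold at every $\oPos$; this is the familiar interdefinability of the modal operators and uses only the classical behaviour of negation and the duality of $\forall$ and $\exists$ over $\rAvail$-successors. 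For $\aKm$, $\Box(\A \supset \B) \supset (\Box \A \supset \Box \B)$, I would argue that if $\M, \oPos \vDash \Box(\A \supset \B)$ and $\M, \oPos \vDash \Box \A$, then at every $\rAvail$-successor $\mathsf{u}$ both $\A \supset \B$ and $\A$ hold, whence $\B$ holds at $\mathsf{u}$, giving $\M, \oPos \vDash \Box \B$; note this is the material-implication version, so no relating condition is required.

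For the necessitation rule $\anec$ I would show that if $\A$ is valid in $\cRelMBCLGCUN$, meaning $\M, \oPos \vDash \A$ for every model over a frame in the class and every $\oPos$, then $\Box \A$ is valid as well: fixing an arbitrary $\oPos$, every $\rAvail$-successor $\mathsf{u}$ is itself a point of the same model, so $\M, \mathsf{u} \vDash \A$ by the assumed validity, and the clause $(\Box)$ yields $\M, \oPos \vDash \Box \A$. The one point demanding care is that $\anec$ preserves validity rather than mere truth at a point, so I would be explicit that the hypothesis is global validity over the whole class, which is exactly what the induction on derivations supplies. I do not expect any serious obstacle: the modal clauses interact with the accessibility relation $\rAvail$ only, while the relating conditions constrain $\RModal$ only, and the two layers do not interfere in the minimal system; the generalized-negation schemata $\aGCUN$, $\aGCUNii$ are soundly handled exactly as in the non-modal proof, now localized at each world via $\scGCUN$. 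The main thing to state cleanly is the overall induction on the length of the derivation, closing the argument once the base cases (axioms) and inductive cases (the rules $\acDS$ and $\anec$) have been verified.
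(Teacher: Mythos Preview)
Your proposal is correct and follows essentially the same approach as the paper: defer the non-modal axiom schemata and $\acDS$ to the soundness proof for $\acBCLGCUN$, then verify $\aDual$, $\aKm$, and $\anec$ directly using the clauses of Definition~\ref{InterpretationMBCL}. Your treatment is in fact a bit more careful than the paper's, since you make explicit that $\anec$ must be shown to preserve global validity and that the whole argument is wrapped in an induction on derivation length, points the paper leaves tacit.
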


\begin{proof} All the cases considered in theorem $\ref{SoundnessBCL}$ still hold. Let us consider the remaining ones.
	
	\textit{Neccessitation Rule}. Let us assume that $\vDash \A$. Then, $\M, \oPos \vDash \A$ for any $\M \in \cModMBCL{\cRelMBCLGCUN}$ and $\oPos \in \cPos$. In particular, for all $\oPos_{1} \in \cPos$ such that $\rAvail(\oPos, \oPos_{1}), \M, \oPos_{1} \vDash \A$. Therefore $\vDash \Box \A$.
	
	\textit{$\aDual$}. Let $\M, \oPos \vDash \Diamond \A$. Then, according to definition \ref{InterpretationMBCL}, this holds iff there exists $\oPos_{1} \in \cPos$ such that $\rAvail(\oPos, \oPos_{1})$ and $\M, \oPos_{1} \vDash \A$. Which is equivalent to the fact that it is not the case that for every $\oPos_{2} \in \cPos$ such that $\rAvail(\oPos, \oPos_{2})$ we have $\M, \oPos_{1} \nvDash \A$. By the same definition \ref{InterpretationMBCL}, this amounts to $\M, \oPos \vDash \neg \Box \neg \A$.
	
	\textit{$\aK$}. Let $\M, \oPos \vDash \Box (\A \supset \B)$ and $\M, \oPos \vDash \Box \A$. Then for all $\oPos_{1} \in \cPos$ such that $\rAvail(\oPos, \oPos_{1}), \M, \oPos_{1} \vDash \A \supset \B$ and $\M, \oPos_{1} \vDash \A$. Hence $\M, \oPos_1 \vDash \B$, therefore $\M, \oPos \vDash \Box \B$.
\end{proof}

For completeness, we adapt the same strategy employed for the non-modal case, building two canonical models. 

\begin{df}[First Canonical Model for $\acMBCLGCUN$]\label{CanonicalModelMBCLGCUN}
	
	The first canonical model for $\acMBCL$ is a quadruple $\langle \cPos, \rAvail, \{\R_{\oPos}\}_{\oPos \in \cPos}, \eVal \rangle$ such that, for any $\A, \B \in \fMBCL$:
	\begin{description}
		\item[(a)] $\cPos = \cMBCLGCUNMax$
		\smallskip
		\item[(b)] $\rAvail(\oPos, \oPos_{1}) \Leftrightarrow$  for all $\A \in \fMBCL, (\square \A \in \oPos \Rightarrow \A \in \oPos_{1})$
		\smallskip
		\item[(c)] for all $\oPos \in \cPos, p \in \Var$:
		$$\eVal(\oPos, p) =
		\begin{cases} 
			1 & $ if $p \in \oPos \\
			0 & $ if $p \notin \oPos
		\end{cases}$$
		\smallskip
		\item[(d)] $\R_{\oPos}(\A, \B) \Leftrightarrow \A \rightarrow \B \in \oPos$
	\end{description}
\end{df}

\begin{df}[Second Canonical Model for $\acMBCLGCUN$]\label{CanonicalModelMBCLsecond}
	
	Let $\M = \langle \cPos, \rAvail, \{\R_{\oPos}\}_{\oPos \in \cPos}$, $\eVal \rangle$ be the first canonical model for $\acMBCL$. For $\oPos \in \cPos$, let $\R_\oPos^\neg \subseteq \fMBCL \times \fMBCL$ be the least relation $\rho$ such that:
	\begin{description}
		\item[(a)] $\M, \oPos \vDash \A \rightarrow \B \Rightarrow \rho (\A, \B)$
		\smallskip
		\item[(b)] $\M, \oPos \vDash \neg^{k} \A \rightarrow \neg^{l} \B$ and $\M, \oPos \vDash \neg^{m} \A \wedge \neg^{n + 1} \B \Rightarrow \rho (\neg^{m} \A, \neg^{n} \B)$
		\smallskip
		\item[(c)] $\M, \oPos \vDash \A \rightarrow \B$ and $\M, \oPos \vDash \neg \A \wedge \B \Rightarrow \rho (\neg\A, \neg\B)$
	\end{description}
	
	\noindent The second canonical model for $\acMBCLGCUN$ is the tuple $\M^\neg = \langle \cPos, \rAvail$, $\{\R^\neg_{\oPos}\}_{\oPos \in \cPos}$, $\eVal \rangle$.
\end{df}

The construction follows precisely the steps for the non-modal case, adapting it to the new modal setting. Now the relation $\R$ is extended to a family in which each relation is indexed to a possible world. From the first canonical model, we build a second one by substituting each relation $\R_\oPos$ with its counterpart $\R_\oPos^\neg$ closed under $\scGCUN$.

As expected, all the facts previously proved hold in their modal version as well. Since the proofs are, with the appropriate notation, identical to the ones already given, we simply list the results.

\begin{lm}\label{CanonicalLemmaMBCLGCUN}
	Let $\M$ be the first canonical model for $\acMBCLGCUN$. Then, for all $\oPos \in \cPos_\M, \A \in \fMBCL$, $\M, \oPos \vDash \A \Leftrightarrow \A \in \oPos$.
\end{lm}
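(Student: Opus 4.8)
The plan is to prove Lemma \ref{CanonicalLemmaMBCLGCUN} by induction on the complexity of the formula $\A$, exactly mirroring the structure of the non-modal proof of Lemma \ref{CanonicalLemmaBCL}, but now carrying the world parameter $\oPos$ throughout and adding cases for the modal operators. The base case is immediate from clause (c) of Definition \ref{CanonicalModelMBCLGCUN}: for $p \in \Var$, $\M, \oPos \vDash p$ iff $\eVal(\oPos, p) = 1$ iff $p \in \oPos$. For the inductive step I fix $\oNum \in \N$, assume the biconditional holds at every world for all formulas of complexity at most $\oNum$, and treat a formula $\A$ of complexity $\oNum + 1$.

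The Boolean cases ($\A = \neg \B$, $\A = \B \wedge \C$, $\A = \B \vee \C$) and the connexive arrow case ($\A = \B \rightarrow \C$) go through just as in the proof of Lemma \ref{CanonicalLemmaBCL}, using the standard properties of maximal consistent sets together with clause (d) of Definition \ref{CanonicalModelMBCLGCUN} (which reads $\R_\oPos(\B,\C) \Leftrightarrow \B \rightarrow \C \in \oPos$) and the truth-conditions of Definition \ref{InterpretationMBCL}; the key ingredient for the arrow is axiom $\aImp$, exactly as before. The genuinely new work is in the two modal cases. For $\A = \Box \B$, the forward direction is the usual contrapositive argument: if $\Box \B \notin \oPos$, one must produce an accessible world $\oPos_1$ (i.e.\ with $\rAvail(\oPos,\oPos_1)$, defined in clause (b) as $\Box\C \in \oPos \Rightarrow \C \in \oPos_1$ for all $\C$) such that $\B \notin \oPos_1$, whence by the inductive hypothesis $\M,\oPos_1 \nvDash \B$ and so $\M,\oPos \nvDash \Box\B$. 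For the backward direction, if $\Box\B \in \oPos$ then for every $\oPos_1$ with $\rAvail(\oPos,\oPos_1)$ we get $\B \in \oPos_1$ by definition of $\rAvail$, hence $\M,\oPos_1 \vDash \B$ by the inductive hypothesis, so $\M,\oPos \vDash \Box\B$. The case $\A = \Diamond\B$ can be handled either directly by a dual argument or, more economically, by reducing it to the $\Box$ case via the axiom $\aDual$ ($\Diamond\A \equiv \neg\Box\neg\A$) and the already-established negation clause.

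The main obstacle is the forward direction of the $\Box$ case, namely the existence of a witnessing accessible maximal consistent set $\oPos_1$ containing $\neg\B$ whenever $\Box\B \notin \oPos$. This is the standard \emph{existence lemma} of modal canonical-model theory: I would show that the set $\{\C : \Box\C \in \oPos\} \cup \{\neg\B\}$ is consistent — if it were inconsistent, finitely many $\C_1,\dots,\C_j$ with $\Box\C_i \in \oPos$ would prove $\B$, so by necessitation $\anec$ and the normality axiom $\aKm$ (together with the distribution of $\Box$ over conjunction and the closure of $\oPos$ under provable consequence) one would derive $\Box\B \in \oPos$, contradicting the hypothesis — and then extend it to a maximal consistent $\oPos_1$ by Lindenbaum's lemma. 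By construction $\oPos_1$ witnesses $\rAvail(\oPos,\oPos_1)$ and contains $\neg\B$. Care is needed to verify that the derivation using $\aKm$ and $\anec$ is available in $\acMBCLGCUN$, but since that calculus extends $\acMBCL$ with only the schemata $\aGCUN,\aGCUNii$ and retains $\aDual,\aKm,\anec$, the argument is unaffected by the added negation axioms; the reasoning is therefore identical to Klonowski's canonical-model construction in \cite{Klonowski2}, and I would note that it transfers verbatim.
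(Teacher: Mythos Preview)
Your proposal is correct and follows exactly the route the paper indicates: the paper does not spell out the proof of Lemma \ref{CanonicalLemmaMBCLGCUN} at all, stating only that ``since the proofs are, with the appropriate notation, identical to the ones already given, we simply list the results,'' and your sketch is precisely the expected adaptation of Lemma \ref{CanonicalLemmaBCL} to the modal setting, with the standard existence-lemma argument (via $\aKm$ and $\anec$) supplying the $\Box$ case. One small remark: when you say the $\Diamond$ case can be reduced to the $\Box$ case via $\aDual$ and ``the already-established negation clause,'' be careful that you are using $\aDual$ at the level of maximal consistent sets (i.e., $\Diamond\B \in \oPos$ iff $\neg\Box\neg\B \in \oPos$) and then arguing semantically, rather than invoking the truth-lemma biconditional for $\neg\Box\neg\B$, which has strictly greater complexity than $\Diamond\B$ and is not yet covered by the inductive hypothesis.
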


\begin{lm}
	Let $\M$ be the first canonical model for $\acMBCLGCUN$, and $\M^\neg$ be the second one. Then for all $\oPos \in \cPos, \M, \oPos \vDash A \Leftrightarrow \M^\neg, \oPos \vDash A$.
\end{lm}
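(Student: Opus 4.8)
The plan is to reproduce the argument of Lemma~\ref{can_mod1-2LemmaBCL} in the modal setting, proceeding by induction on the complexity of $A$, but now with the inductive hypothesis quantified uniformly over all worlds $\oPos \in \cPos$. Since, by Definitions~\ref{CanonicalModelMBCLGCUN} and~\ref{CanonicalModelMBCLsecond}, the models $\M$ and $\M^\neg$ share the same set of worlds $\cPos$, the same accessibility relation $\rAvail$, and the same valuation $\eVal$ — differing only in the families $\{\R_\oPos\}$ versus $\{\R_\oPos^\neg\}$ — every Boolean case is immediate, and the modal cases transfer cleanly. For $A = \Box B$, the clause $(\Box)$ of Definition~\ref{InterpretationMBCL} gives $\M, \oPos \vDash \Box B$ iff $\M, \mathsf{u} \vDash B$ for every $\mathsf{u}$ with $\rAvail(\oPos, \mathsf{u})$; applying the inductive hypothesis at each such $\mathsf{u}$ (legitimate because $B$ has lower complexity and the hypothesis ranges over all worlds) this is equivalent to $\M^\neg, \mathsf{u} \vDash B$ for all such $\mathsf{u}$, i.e. $\M^\neg, \oPos \vDash \Box B$; the case $A = \Diamond B$ is dual. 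Hence the only substantive case is the connexive arrow, which is entirely local to the fixed world $\oPos$ and so is handled exactly as in the non-modal lemma.

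For the direction `$\Rightarrow$' with $A = B \to C$, I would argue that $\M, \oPos \vDash B \to C$ yields, via clause $(\rightarrow)$ of Definition~\ref{InterpretationMBCL}, both $(\M, \oPos \nvDash B$ or $\M, \oPos \vDash C)$ and $\R_\oPos(B,C)$. The inductive hypothesis transfers the first conjunct to $\M^\neg$, while clause~(a) of Definition~\ref{CanonicalModelMBCLsecond} turns $\M, \oPos \vDash B \to C$ directly into $\R_\oPos^\neg(B,C)$; together these give $\M^\neg, \oPos \vDash B \to C$.

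The hard part, as in Lemma~\ref{can_mod1-2LemmaBCL}, is the direction `$\Leftarrow$'. Assuming $\M^\neg, \oPos \vDash B \to C$ we have $\R_\oPos^\neg(B,C)$, and by the minimality built into Definition~\ref{CanonicalModelMBCLsecond} this pair is introduced by one of the defining clauses. If it is clause~(a), then $\M, \oPos \vDash B \to C$ and we are done; the task is to rule out the closure clauses~(b) and~(c). Taking clause~(b) as the representative case, $B = \neg^m B_1$, $C = \neg^n C_1$ with $\M, \oPos \vDash \neg^k B_1 \to \neg^l C_1$ and $\M, \oPos \vDash \neg^m B_1 \wedge \neg^{n+1} C_1$. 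From $\M^\neg, \oPos \vDash B \to C$ the truth-condition gives $\M^\neg, \oPos \nvDash \neg^m B_1$ or $\M^\neg, \oPos \vDash \neg^n C_1$; the inductive hypothesis pulls this back to $\M, \oPos \nvDash \neg^m B_1$ or $\M, \oPos \vDash \neg^n C_1$. The first disjunct is impossible since $\M, \oPos \vDash \neg^m B_1$, so $\M, \oPos \vDash \neg^n C_1$; but $\M, \oPos \vDash \neg^{n+1} C_1$ together with the classical behaviour of $\neg$ forces $\M, \oPos \nvDash \neg^n C_1$, a contradiction. Clause~(c) is excluded by the same toggling argument between $\neg A$ and $A$. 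Hence clause~(a) must witness $\R_\oPos^\neg(B,C)$, giving $\M, \oPos \vDash B \to C$. I expect this $n$ versus $n+1$ bookkeeping to be the only delicate point, and since it is purely local to $\oPos$ it is identical to the computation already carried out in the non-modal lemma.
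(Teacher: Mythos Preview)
Your proposal is correct and follows essentially the same approach as the paper, which simply states that the proof is identical (with appropriate notation) to that of Lemma~\ref{can_mod1-2LemmaBCL} and omits the details. In fact you are slightly more thorough: you spell out the $\Box$ and $\Diamond$ cases via the shared $\rAvail$ and a world-quantified inductive hypothesis, and you explicitly dispose of clause~(c) of Definition~\ref{CanonicalModelMBCLsecond}, whereas the paper's non-modal proof only treats clauses~(a) and~(b).
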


\begin{lm}
	Let $\M^\neg$ be the second canonical model for $\acMBCLGCUN$. Then $\M^\neg \in \cModBCL{\cRelMBCLGCUN}$.
\end{lm}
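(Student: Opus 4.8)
The plan is to reduce this lemma entirely to its non-modal counterpart, Lemma \ref{ClassLemmaBCL}, by exploiting the fact that every defining condition of $\cRelMBCLGCUN$ — the connexive conditions $\scAi$, $\scAii$, $\scBo$, $\scBi$, $\scBii$, the closure $\scCUN$, and the generalized closure $\scGCUN$ — is a \emph{local} property of each individual relation $\R_\oPos^\neg$, and that $\Box$ and $\Diamond$ never interact with the relating relation in $\MBCL$. I would therefore fix an arbitrary world $\oPos \in \cPos$ and verify that $\R_\oPos^\neg$ satisfies each condition, noting that Definition \ref{CanonicalModelMBCLsecond} builds $\R_\oPos^\neg$ world-by-world from truth at $\oPos$ in the first canonical model in exactly the same shape as Definition \ref{CanonicalModelBCLsecond}, with $\M, \oPos \vDash$ replacing $\M \vDash$.

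For the generalized closure $\scGCUN$ I would assume $\R_\oPos^\neg(\neg^k \A, \neg^l \B)$ and split, as dictated by Definition \ref{CanonicalModelMBCLsecond}, into the two ways such a pair can enter the relation. In the first case $\M, \oPos \vDash \neg^k \A \rightarrow \neg^l \B$; since $\oPos$ is a maximal consistent set containing the relevant instance of $\aGCUN$, and using Lemma \ref{CanonicalLemmaMBCLGCUN} to pass between membership in $\oPos$ and local truth at $\oPos$, I obtain $\M, \oPos \vDash (\neg^m \A \rightarrow \neg^n \B) \lor (\neg^m \A \land \neg^{n+1} \B)$, and each disjunct yields $\R_\oPos^\neg(\neg^m \A, \neg^n \B)$ via clause (a) or clause (b) of the definition respectively. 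In the second case the pair comes from clause (b), so $\neg^k \A = \neg^m \A_1$, $\neg^l \B = \neg^n \B_1$ with $\M, \oPos \vDash \neg^k \A_1 \rightarrow \neg^l \B_1$; an application of $\aGCUNii$ gives $\M, \oPos \vDash \neg^{2m-k} \A_1 \rightarrow \neg^{2n-l} \B_1$, and the graphical identities $\neg^{2m-k} \A_1 = \neg^m \A$ and $\neg^{2n-l} \B_1 = \neg^n \B$ close the case. This is verbatim the argument of Lemma \ref{ClassLemmaBCL}, only relativized to $\oPos$.

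For the connexive conditions and $\scCUN$ I would follow \cite{Klonowski2}, proposition 4.11 in its modal form, again working at the fixed world $\oPos$ and using clause (c) of Definition \ref{CanonicalModelMBCLsecond} together with the theorems $\aCUNi$, $\aCUNii$ to secure closure under a single negation. Since $\oPos$ was arbitrary, the family $\{\R_\oPos^\neg\}_{\oPos \in \cPos}$ satisfies all the conditions simultaneously, so the underlying combined frame lies in $\cRelMBCLGCUN$ and hence $\M^\neg \in \cModBCL{\cRelMBCLGCUN}$.

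I do not expect a genuinely new obstacle here: the whole content is that the relating conditions are world-local and orthogonal to the modal apparatus, so the non-modal verification transfers pointwise. The only point demanding care is the bookkeeping — ensuring that all formulas involved (which may now carry modalities) are handled through Lemma \ref{CanonicalLemmaMBCLGCUN}, so that local truth at $\oPos$ and membership in the maximal consistent set $\oPos$ are correctly interchanged, and that the indexing by $\oPos$ is carried consistently through the two-case split for $\scGCUN$.
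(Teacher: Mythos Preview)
Your proposal is correct and follows exactly the approach the paper takes: the paper does not give an independent proof of this lemma but simply remarks that the argument is identical, with the appropriate notation, to the non-modal Lemma \ref{ClassLemmaBCL}, and your pointwise-in-$\oPos$ reduction makes that remark precise. In fact you supply more detail than the paper does, since the paper only lists the result and defers entirely to the earlier proof.
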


\begin{tw}[Completeness Theorem for $\acMBCLGCUN$]
	For any $\Gamma \subseteq \fMBCL$, $\A \in \fMBCL$, $\Gamma \vDash_{\cRelMBCLGCUN} \A$ $\Rightarrow$ $\Gamma \vdash_{\acMBCLGCUN} \A$.
\end{tw}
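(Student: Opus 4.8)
The plan is to prove the contrapositive, mirroring exactly the assembly used in the Completeness Theorem for $\acBCLGCUN$, but now discharging the three modal lemmas just stated. So I would begin by supposing $\Gamma \nvdash_{\acMBCLGCUN} \A$, and first observe that $\Gamma \cup \{\neg \A\}$ is $\acMBCLGCUN$-consistent: were it inconsistent it would prove every formula, and the classical reasoning available through $\aCPL$ together with $\acDS$ would then yield $\Gamma \vdash_{\acMBCLGCUN} \A$, against the hypothesis. Applying Lindenbaum's lemma, I extend $\Gamma \cup \{\neg \A\}$ to a maximal $\acMBCLGCUN$-consistent set $\Delta \in \cMBCLGCUNMax$.

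The decisive observation is that, by clause (a) of Definition \ref{CanonicalModelMBCLGCUN}, the set $\Delta$ is itself a point of the first canonical model $\M$, since $\cPos_\M = \cMBCLGCUNMax$. Invoking the truth lemma (Lemma \ref{CanonicalLemmaMBCLGCUN}) at the world $\Delta$, I get $\M, \Delta \vDash \B \Leftrightarrow \B \in \Delta$ for every $\B \in \fMBCL$. Because $\neg \A \in \Delta$ forces $\A \notin \Delta$, this gives $\M, \Delta \nvDash \A$; and because $\Gamma \subseteq \Delta$, it gives $\M, \Delta \vDash \B$ for every $\B \in \Gamma$. Thus $\langle \M, \Delta\rangle$ is already a refuting pair, but for a model whose frame need not yet lie in the target class.

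I would then transfer this countermodel to the second canonical model and relocate it into the right class. By the lemma asserting $\M, \oPos \vDash \B \Leftrightarrow \M^\neg, \oPos \vDash \B$, instantiated at $\oPos = \Delta$, the second canonical model $\M^\neg$ satisfies $\M^\neg, \Delta \nvDash \A$ while $\M^\neg, \Delta \vDash \B$ for all $\B \in \Gamma$. The class-membership lemma then guarantees $\M^\neg \in \cModBCL{\cRelMBCLGCUN}$, i.e. that $\M^\neg$ is built over a frame in the desired class. Hence $\M^\neg$ together with the world $\Delta$ is a genuine counterexample witnessing $\Gamma \nvDash_{\cRelMBCLGCUN} \A$, which is precisely the contrapositive of the claim.

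The assembly above is routine once the three lemmas are granted; the substantive modal content has already been absorbed into Lemma \ref{CanonicalLemmaMBCLGCUN}. I expect the only genuinely delicate point to live inside that lemma, namely the $\Box$-step of its induction: one needs the standard existence argument showing that if $\Box \B \notin \oPos$ then $\{\C : \Box \C \in \oPos\} \cup \{\neg \B\}$ is consistent and extends to a maximal consistent $\oPos_1$ with $\rAvail(\oPos, \oPos_1)$ (via clause (b) of Definition \ref{CanonicalModelMBCLGCUN}) and $\B \notin \oPos_1$; this is exactly where the necessitation rule $\anec$ and the normality axiom $\aKm$ are used. Since that lemma is assumed here, no further obstacle arises, and the connexive-and-negation machinery of the non-modal proof carries over unchanged, now applied pointwise at each world.
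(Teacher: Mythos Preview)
Your proposal is correct and follows essentially the same approach as the paper: the paper states that the modal completeness proof is, with the appropriate notation, identical to the non-modal one and simply lists the three lemmas and the theorem without further argument. Your assembly---contraposition, Lindenbaum extension to $\Delta \in \cMBCLGCUNMax$, truth lemma at the world $\Delta$ in the first canonical model, transfer to $\M^\neg$, and class membership---is exactly the intended route, only spelled out more explicitly (including the observation that $\Delta$ is itself a point of $\cPos_\M$, which is the one structural difference from the non-modal case).
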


\section{Closure under demodalization}
\label{sec:5}

Besides the fact that the relating relation in $\MBCL$ could be closed under multiple negations, there is also a possibility of closing it under a demodalization function. This function simplifies a given expression by removing all modal symbols. By closing the relation $\R$ under the mentioned function, we obtain a connection between expressions containing modal symbols and expressions resulting from removal of those symbols.

We can distinguish three different types of connections between those two pairs of formulas: (1) if two formulas containing modal symbols are in relation $\R$, then the formulas resulting from them by removing modal symbols are also in relation $\R$, (2) the implication the other way round, (3) the equivalence.\footnote{Since it will not be relevant to the current discussion, we leave aside the mixed cases in which we start from a pair containing both a modal formula and a demodalized one and obtain a pair of a demodalized and a modal ones. Using the notation that is going to explained in the following, these are the cases:
	
	\begin{description}
		\item[($\mathsf{DemM1}$)] $\RModal(\A, \fDem(\B)) \Rightarrow \RModal(\fDem(\A), \B)$
		\smallskip
		\item[($\mathsf{DemM2})$] $\RModal (\fDem(\A), \B) \Rightarrow \RModal(\A, \fDem(\B))$
\end{description}}

In this section, we will provide semantic conditions matching described types of connections and present axiom schemata for each of them. Having this, we will use the notion of a canonical model for $\MBCL$ used in the previous section, to prove completeness and soundness for those three axiom systems. 

Let us start with the definition of a demodalization function. The function was introduced by Malinowski and Jarmu\.zek in (see \cite{JarmuzekMalinowski2019}, p. 227). 

\begin{df}
	
	A function $\fDem: \fMBCL \longrightarrow \fMBCL$ will be called demodalization if it satisfies following conditions:
	\begin{description}
		\item[(a)] $\fDem(\A) = \A$ if $\A \in \Var$
		\smallskip
		\item[(b)] $\fDem(\neg \A) = \neg \fDem(\A)$
		\smallskip
		\item[(c)] $\fDem(\A \star \B) = \fDem(\A) \star \fDem(\B)$, where $\star \in \lbrace \wedge, \vee, \rightarrow \rbrace$
		\smallskip
		\item[(d)] $\fDem(\ast \A) = \A$, where $\ast \in \lbrace \Diamond, \Box \rbrace$.
	\end{description}
\end{df}

Using defined notions, we can introduce conditions for closure under demodalization. Let $\M = \langle \cPos, \rAvail, \eVal, \cRModal \rangle$, and $\oPos \in \cPos$, then:

\begin{description}
	\item[$\scDemR$] $\RModal(\A, \B) \Rightarrow \RModal( \fDem(\A), \fDem(\B))$
	\smallskip
	\item[$\scDemL$] $\RModal(\fDem(\A), \fDem(\B)) \Rightarrow \RModal(\A, \B)$
	\smallskip
	\item[$\scDemE$] $\RModal(\A, \B) \equiv \RModal( \fDem(\A), \fDem(\B))$
\end{description}

To express those conditions, we propose three axiom schemata\footnote{Each of these is properly a schema of schemata, since we didn't expand the language $\lMBCL$ with a demodalization operator, which belongs to the metalanguage here. Such linguistic expansion is viable, but it would required to add a semantic clause for the new $d$ symbol, and we are not following that decision in the current work.}:

\begin{description}
	\item[$\aCUNDR$] $(\A \rightarrow \B) \supset (\fDem(\A) \rightarrow \fDem(\B)) \vee (\fDem(\A) \wedge \neg \fDem(\B))$
	\smallskip
	\item[$\aCUNDL$] $(\fDem(\A) \rightarrow \fDem(\B)) \supset (\A \rightarrow \B) \vee (\A \wedge \neg \B)$
	\smallskip
	\item[$\aCUNDE$] $(\A \rightarrow \B) \equiv (\fDem(\A) \rightarrow \fDem(\B)) \vee (\fDem(\A) \wedge \neg \fDem(\B))$
\end{description}

By adding any of those axiom schemata to $\acMBCL$ we can obtain three different axiom systems, which we will denote respectively: $\acMBCLCUNDR$, $\acMBCLCuNDL{}$ and $\acMBCLCUNDE$.

The main advantage of incorporating those axioms is simplification of a class of models for $\MBCL$ containing some specific modal axiom schemata. Let us consider the well-known modal laws: 

\begin{description}
	\item[$\aDOne$] $\Diamond \A \rightarrow \neg \Box \neg \A$
	\smallskip
	\item[$\aDTwo$] $\neg \Box \neg \A \rightarrow \Diamond \A$
	\smallskip
	\item[$\aK$] $\Box(\A \rightarrow \B) \rightarrow (\Box \A \rightarrow \Box \B)$
	\smallskip 
	\item[$\aT$] $\Box\A \rightarrow \A$
	\smallskip
	\item[$\aD$] $\Box\A \rightarrow \Diamond \A$
	\smallskip
	\item[$\aB$] $\A \rightarrow \Diamond \Box\A$
	\smallskip
	\item[$\aFour$] $\Box\A \rightarrow \Box \Box\A$
	\smallskip
	\item[$\aFive$] $\Diamond\A \rightarrow \Box \Diamond\A$
\end{description}

Listed laws are true in a model if following properties of combined frames are fulfilled. Let $\M = \langle \cPos, \rAvail, \eVal, \cRModal \rangle$ and $\oPos, \oPos_{1} \in \cPos$, then:

\begin{description}
	\item[$\scDOne$] $\RModal(\Diamond \A, \neg \Box \neg \A)$
	\smallskip
	\item[$\scDTwo$] $\RModal(\neg \Box \neg \A, \Diamond \A)$
	\smallskip
	\item[$\scKOne$] $\RModal(\Box(\A \rightarrow \B), (\Box \A \rightarrow \Box \B))$
	\smallskip
	\item[$\scKTwo$] $((\rAvail(\oPos, \oPos_{1}) \Rightarrow \R_{\oPos_{1}}(\A, \B)) \Rightarrow \RModal(\Box \A, \Box \B))$
	\smallskip
	\item[$\scT$] $\RModal(\Box \A, \A)\textnormal{ and the modal frame is reflexive}$
	\smallskip
	\item[$\scD$] $\RModal(\Box \A, \Diamond\A)\textnormal{ and the modal frame is serial}$
	\smallskip
	\item[$\scB$] $\RModal(\A, \Box \Diamond\A)\textnormal{ and the modal frame is symmetrical}$
	\smallskip
	\item[$\scFour$] $\RModal(\Box \A, \Box \Box\A)\textnormal{ and the modal frame is transitive}$
	\smallskip
	\item[$\scFive$] $\RModal(\Diamond \A, \Box \Diamond\A)\textnormal{ and the modal frame is Euclidean}$
\end{description}

If $\MBCL$ axiom system contains $\aCUNDR$ schema, mentioned conditions could be simplified in the following way:

\begin{description}
	\item[$\scDOne_d$] $\RModal(\fDem(\Diamond \A), \fDem(\neg \Box \neg \A))$
	\smallskip
	\item[$\scDTwo_d$] $\RModal(\fDem(\neg \Box \neg \A), \fDem(\Diamond \A))$
	\smallskip
	\item[$\scKOne_d$] $\RModal(\fDem(\Box(\A \rightarrow \B)), (\fDem(\Box \A) \rightarrow \fDem(\Box \B)))$
	\smallskip
	\item[$\scKTwo_d$] $((\rAvail(\oPos, \oPos_{1}) \Rightarrow \R_{\oPos_{1}}(\A, \B)) \Rightarrow \RModal(\fDem(\Box \A), \fDem(\Box \B)))$
	\smallskip
	\item[$\scT_d$] $\RModal(\fDem(\Box \A), \A)\textnormal{ and the modal frame is reflexive}$
	\smallskip
	\item[$\scD_d$] $\RModal(\fDem(\Box \A), \fDem(\Diamond\A))\textnormal{ and the modal frame is serial}$
	\smallskip
	\item[$\scB_d$] $\RModal(\A, \fDem(\Box \Diamond\A))\textnormal{ and the modal frame is symmetrical}$
	\smallskip
	\item[$\scFour_d$] $\RModal(\fDem(\Box \A), \fDem(\Box \Box\A))\textnormal{ and the modal frame is transitive}$
	\smallskip
	\item[$\scFive_d$] $\RModal(\fDem(\Diamond \A), \fDem(\Box \Diamond\A))\textnormal{ and the modal frame is Euclidean}$
\end{description}

After application of the demodalization function, listed conditions are of the form:

\begin{description}
	\item[$\scDOne_d$] $\RModal(\A,\neg \neg \A)$
	\smallskip
	\item[$\scDTwo_d$] $\RModal((\neg \neg \A, \A)$
	\smallskip
	\item[$\scKOne_d$] $\RModal((\A \rightarrow \B)), (\A \rightarrow \B))$
	\smallskip
	\item[$\scKTwo_d$] $((\rAvail(\oPos, \oPos_{1}) \Rightarrow \R_{\oPos_{1}}(\A, \B)) \Rightarrow \RModal(\A, \B))$
	\smallskip
	\item[$\scT_d$] $\RModal(\A, \A)\textnormal{ and the modal frame is reflexive}$
	\smallskip
	\item[$\scD_d$] $\RModal(\A, \A)\textnormal{ and the modal frame is serial}$
	\smallskip
	\item[$\scB_d$] $\RModal(\A, \A)\textnormal{ and the modal frame is symmetrical}$
	\smallskip
	\item[$\scFour_d$] $\RModal(\A,\A)\textnormal{ and the modal frame is transitive}$
	\smallskip
	\item[$\scFive_d$] $\RModal(\A, \A)\textnormal{ and the modal frame is Euclidean}$.
\end{description}

Hence, conditions $\scDOne$, $\scDTwo$, $\scT$, $\scD$, $\scB$, $\scFour$, $\scFive$ are reduced to reflexivity of the relating relations and standard conditions for modal frames. In particular, if ($\mathsf{c}$) is any of the mentioned conditions, notice that under $\scDemL$ we have that for ($\mathsf{c}$) to hold it is enough for ($\mathsf{c}$)$_d$ to hold. Therefore ($\mathsf{c}$)$_d$ is a sufficient condition for the relevant frame property to obtain, a condition which is likely easier to check than ($\mathsf{c}$)$_d$. This is why we are going to focus our attention over classes of models closed under $\scDemL$.

For this reason we introduce the axiom system $\acMBCLCuNDL$, which is the Hilbert-style calculus obtained from $\aCPL$, $\aAi$, $\aAii$, $\aBi$, $\aBii$, $\aImp$, $\acDS$, by the addition of schema $\aCUNDL$. Let us denote the class of all the combined frames whose relations satisfy the $\MBCL$ conditions and $\scDemL$ as $\cRelMBCLDEML$. 

\begin{tw}[Soundness Theorem for $\acMBCLCuNDL$]\label{SoundnessMBCLDEML}
	For any $\Gamma \subseteq \fMBCL, \A \in \fMBCL$, $\Gamma \vdash_{\acMBCLCuNDL} \A \Rightarrow$  $\Gamma \vDash_{\cRelMBCLDEML} \A$.
\end{tw}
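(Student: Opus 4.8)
The plan is to prove soundness by induction on the length of a derivation of $\A$ from $\Gamma$ in $\acMBCLCuNDL$, following the template already used for Theorems \ref{SoundnessBCL} and \ref{SoundnessMBCLGCUN}. Formulas belonging to $\Gamma$ are satisfied by assumption in any model validating $\Gamma$; each axiom schema must be shown valid over the class $\cRelMBCLDEML$; and each rule must preserve the relevant relation. Since by definition every frame in $\cRelMBCLDEML$ satisfies the connexive conditions $\scAi$, $\scAii$, $\scBo$, $\scBi$, $\scBii$, the classical tautologies together with $\aAi$, $\aAii$, $\aBi$, $\aBii$ and $\aImp$ are valid by exactly the arguments given in the proof of Theorem \ref{SoundnessBCL}, the rule $\acDS$ preserves truth as shown there, and whatever modal schemata and the rule $\anec$ that $\acMBCLCuNDL$ inherits from $\acMBCL$ are handled verbatim as in the proof of Theorem \ref{SoundnessMBCLGCUN} (with $\anec$ applied only to theorems). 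Thus the only genuinely new obligation is the validity of $\aCUNDL$.

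To discharge $\aCUNDL$, I would fix a model $\M$ over a frame in $\cRelMBCLDEML$ and a world $\oPos$, assume $\M, \oPos \vDash \fDem(\A) \rightarrow \fDem(\B)$, and argue that $\M, \oPos \vDash (\A \rightarrow \B) \vee (\A \wedge \neg \B)$. By the truth-condition for $\rightarrow$ in Definition \ref{InterpretationMBCL}, the assumption yields in particular $\RModal(\fDem(\A), \fDem(\B))$, and since the frame satisfies $\scDemL$ we obtain $\RModal(\A, \B)$. I would then case split on the truth values of $\A$ and $\B$ at $\oPos$: if $\M, \oPos \nvDash \A$ or $\M, \oPos \vDash \B$, then $\M, \oPos \vDash \A \supset \B$, which together with $\RModal(\A, \B)$ gives $\M, \oPos \vDash \A \rightarrow \B$, the left disjunct; otherwise $\M, \oPos \vDash \A$ and $\M, \oPos \nvDash \B$, so $\M, \oPos \vDash \A \wedge \neg \B$, the right disjunct. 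In either case the consequent of $\aCUNDL$ holds, so the schema is valid over $\cRelMBCLDEML$.

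The step I expect to be the only real obstacle is precisely this last case analysis, because the demodalization function generally changes truth values: $\M, \oPos \vDash \fDem(\A)$ need not coincide with $\M, \oPos \vDash \A$. Consequently the material component of the antecedent $\fDem(\A) \rightarrow \fDem(\B)$ carries no usable information about $\A$ and $\B$, and the entire weight of the argument must be routed through the relating clause: the only thing to extract from the antecedent is $\RModal(\fDem(\A), \fDem(\B))$, which $\scDemL$ transports to $\RModal(\A, \B)$, after which the classical case split on the values of $\A$ and $\B$ selects the correct disjunct. Recognizing that the disjunct $\A \wedge \neg \B$ is exactly what absorbs the case in which the relating relation holds but the material implication fails is what makes the axiom match the semantic condition, and it is what keeps the proof from an invalid appeal to the demodalized truth values.
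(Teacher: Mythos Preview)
Your proposal is correct and follows essentially the same route as the paper: defer all shared schemata and rules to the earlier soundness proofs, and for $\aCUNDL$ extract $\RModal(\fDem(\A),\fDem(\B))$ from the antecedent, apply $\scDemL$ to obtain $\RModal(\A,\B)$, then let a classical case split on the truth values of $\A,\B$ at $\oPos$ select the appropriate disjunct. The paper phrases this last step contrapositively (assume the first disjunct $\A\rightarrow\B$ fails, rule out failure of the relating clause via $\scDemL$, hence the material implication fails and the second disjunct $\A\wedge\neg\B$ holds), but the content is identical to your direct case analysis, and your explicit remark that the demodalized material component carries no information about $\A,\B$ is exactly the point.
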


\begin{proof} By theorem \ref{SoundnessMBCLGCUN}, soundness holds for all the shared cases.
	
	\textit{$\aCUNDL$}. Let $\M, \oPos \vDash d(A) \to d(B)$ and $\M, \oPos \nvDash A \to B$. The latter holds iff either (a) $\M, \oPos \nvDash A \supset B$ or (b) $\sim \R_\oPos (A,B)$. (b) cannot be the case, since $\M \in \cModMBCL\cRelMBCLDEML$, therefore since $\R_\oPos (d(A),D(B))$, by $\scDemL$ also $\R_\oPos (A,B)$. Hence (a) holds.
\end{proof}

For completeness, we perform the same strategy employed in the previous section.

\begin{df}[First canonical Model for $\acMBCLCuNDL$]\label{CanonicalModelMBCLCUNDL}
	
	The first canonical model for $\acMBCLCuNDL$ differs from the one for $\acMBCLGCUN$ (see definition \ref{CanonicalModelMBCLGCUN}) only for the set of possible worlds:
	
	\begin{description}
		\item[(a)] $\cPos = \cMBCLCUNDLMax$
	\end{description}
\end{df}

\begin{df}[Second Canonical Model for $\acMBCLCuNDL$]\label{CanonicalModelMBCLCUNDLsecond}
	
	Let $\M = \langle \cPos, \rAvail, \{\R_{\oPos}\}_{\oPos \in \cPos}$, $\eVal \rangle$ be the first canonical model for $\acMBCLCuNDL$. For $\oPos \in \cPos$, let $\R_\oPos^d \subseteq \fMBCL \times \fMBCL$ be the least relation $\rho$ such that:
	\begin{description}
		\item[(a)] $\M, \oPos \vDash \A \rightarrow \B \Rightarrow \rho (\A, \B)$
		\smallskip
		\item[(b)] $\M, \oPos \vDash d(\A )\rightarrow d(\B)$ and $\M, \oPos \vDash \A \wedge \neg \B \Rightarrow \rho (\A, \B)$
	\end{description}
	
	\noindent The second canonical model for $\acMBCLCuNDL$ is the tuple $\M^d = \langle \cPos, \rAvail$, $\{\R^d_{\oPos}\}_{\oPos \in \cPos}$, $\eVal \rangle$.
\end{df}

In the the definition of the second canonical model, now the defining characteristic of the derived relation $\R^d$ is no longer closure under negation but the role played by the demodalizing function. Point (b) of the definition in fact guarantees the relation of two modalized formulas even when the material implication between them fails but the relating implication between their demodalized versions holds, that is to say when the demodalized versions are related. This is just a rewording of $\scDemL$.

Since the first canonical model for $\acMBCLCuNDL$ is almost identical to the one for $\acMBCLGCUN$, the canonical valuation lemma immediately follows from lemma \ref{CanonicalLemmaMBCLGCUN}.

\begin{lm}\label{CanonicalLemmaMBCLCUNDL}
	Let $\M$ be the first canonical model for $\acMBCLCuNDL$. Then, for all $\oPos \in \cPos_{\M}, \A \in \fMBCL$, $\M, \oPos \vDash \A \Leftrightarrow \A \in \oPos$.
\end{lm}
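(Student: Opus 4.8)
The plan is to prove the equivalence by induction on the complexity of $\A$, reusing step by step the argument behind Lemma \ref{CanonicalLemmaMBCLGCUN}. The guiding observation is that, by definition \ref{CanonicalModelMBCLCUNDL}, the first canonical model for $\acMBCLCuNDL$ coincides with the one for $\acMBCLGCUN$ on all clauses (b)--(d) of definition \ref{CanonicalModelMBCLGCUN}; the sole difference is that its carrier is $\cMBCLCUNDLMax$ instead of $\cMBCLGCUNMax$. Since each inductive step of the earlier truth lemma appeals only to those shared clauses and to the standard closure properties of maximal consistent sets---properties that hold for maximal consistent sets over any Hilbert calculus---all that must be verified is that the axioms and rules invoked are still available in $\acMBCLCuNDL$.

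First I would settle the base and Boolean cases. For $\A = p \in \Var$, clause (c) gives $\M, \oPos \vDash p \Leftrightarrow \eVal(\oPos, p) = 1 \Leftrightarrow p \in \oPos$, and the cases $\A = \neg \B$, $\A = \B \wedge \C$, $\A = \B \vee \C$ follow from maximality and consistency of $\oPos$ together with the classical fragment $\aCPL$. For the relating arrow $\A = \B \rightarrow \C$ I would transcribe the corresponding case of Lemma \ref{CanonicalLemmaMBCLGCUN}: clause (d) turns $\R_{\oPos}(\B, \C)$ into membership $\B \rightarrow \C \in \oPos$, while $\aImp$ and $\acDS$ secure the material-implication half, giving $\M, \oPos \vDash \B \rightarrow \C \Leftrightarrow \B \rightarrow \C \in \oPos$. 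It is worth stressing that the negation-iterating schemata $\aGCUN, \aGCUNii$ specific to $\acMBCLGCUN$ are never used in the truth lemma---they intervene only in the class-membership lemma---so their replacement by $\aCUNDL$ in $\acMBCLCuNDL$ leaves this argument untouched.

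The modal clauses carry the real content, and the $\Box$-case is the main obstacle. For $\A = \Box \B$, the implication $\Box \B \in \oPos \Rightarrow \M, \oPos \vDash \Box \B$ reads directly off clause (b): every $\oPos_{1}$ with $\rAvail(\oPos, \oPos_{1})$ contains $\B$, whence $\M, \oPos_{1} \vDash \B$ by the induction hypothesis. The converse is the usual existence step. Assuming $\Box \B \notin \oPos$, I would form $\oPos^{-} = \{\C : \Box \C \in \oPos\}$ and show that $\oPos^{-} \cup \{\neg \B\}$ is consistent: otherwise $\anec$ together with $\aKm$ (and $\Box$-distribution over conjunction, itself derivable from $\aKm$, $\anec$ and $\aCPL$) would force $\Box \B \in \oPos$, contradicting the hypothesis. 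A Lindenbaum extension then yields $\oPos_{1} \in \cMBCLCUNDLMax$ with $\rAvail(\oPos, \oPos_{1})$ and $\B \notin \oPos_{1}$, so $\M, \oPos_{1} \nvDash \B$ by the induction hypothesis and hence $\M, \oPos \nvDash \Box \B$. The case $\A = \Diamond \B$ reduces to the previous one through $\aDual$.

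Finally I would observe that every ingredient used---clauses (b)--(d), $\aCPL$, $\aImp$, $\acDS$, and the modal schemata $\aDual, \aKm$ and rule $\anec$ that $\acMBCLCuNDL$ inherits from $\acMBCL$, together with Lindenbaum's lemma relative to $\acMBCLCuNDL$-consistency---is exactly what underlies Lemma \ref{CanonicalLemmaMBCLGCUN}, the only bookkeeping change being the systematic substitution of $\cMBCLGCUNMax$ by $\cMBCLCUNDLMax$. Consequently the induction transfers without alteration, which is precisely why the lemma follows immediately from its $\acMBCLGCUN$ counterpart.
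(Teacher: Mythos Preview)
Your proposal is correct and follows essentially the same approach as the paper, which simply remarks that the first canonical model for $\acMBCLCuNDL$ differs from that for $\acMBCLGCUN$ only in the set of possible worlds and therefore the result follows immediately from Lemma \ref{CanonicalLemmaMBCLGCUN}. You spell out in more detail than the paper why that reference is legitimate---in particular the modal existence step via $\anec$ and $\aKm$---but the underlying strategy is identical.
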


Again, the two canonical models are semantically indistinguishable. Moreover, the second one belongs to the desired class of models.

\begin{lm}
	Let $\M$ be the first canonical model for $\acMBCLCuNDL$, and $\M^d$ be the second one. Then for all $\oPos \in \cPos, \M, \oPos \vDash A \Leftrightarrow \M^d, \oPos \vDash A$.
\end{lm}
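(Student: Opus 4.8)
The plan is to prove the equivalence by induction on the complexity of $A$, following closely the template of Lemma \ref{can_mod1-2LemmaBCL} for the non-modal case. The statement is understood as universally quantified over all worlds, so that the induction hypothesis, once established for a subformula, is available simultaneously at every point of $\cPos$. The decisive structural observation is that $\M$ and $\M^d$ share the same set of worlds $\cPos$, the same accessibility relation $\rAvail$, and the same valuation $\eVal$; they differ only in the family of relating relations, each $\R_\oPos$ being replaced by $\R_\oPos^d$. Consequently, the base case and all the inductive cases for Boolean connectives, as well as both modal cases, collapse into direct applications of the induction hypothesis.

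I would dispatch the modal cases first to highlight their triviality. If $A = \Box B$, then $\M, \oPos \vDash \Box B$ holds iff $\M, \mathsf{u} \vDash B$ for every $\mathsf{u}$ with $\rAvail(\oPos, \mathsf{u})$; since $\rAvail$ is common to both models and the induction hypothesis gives $\M, \mathsf{u} \vDash B \Leftrightarrow \M^d, \mathsf{u} \vDash B$ at each such $\mathsf{u}$, this is equivalent to $\M^d, \oPos \vDash \Box B$. The case $A = \Diamond B$ is symmetric, replacing the universal with an existential successor. Thus the only case requiring genuine work is $A = B \rightarrow C$, exactly as in the non-modal setting.

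For the arrow, in the direction $\Rightarrow$ I would assume $\M, \oPos \vDash B \rightarrow C$. By the truth condition this splits into the material part $(\M, \oPos \nvDash B$ or $\M, \oPos \vDash C)$, which transfers to $\M^d$ by the induction hypothesis applied to $B$ and $C$, and the relational part $\R_\oPos(B, C)$. Clause (a) of Definition \ref{CanonicalModelMBCLCUNDLsecond} ensures that $\M, \oPos \vDash B \rightarrow C$ yields $\R_\oPos^d(B, C)$, so both conjuncts of the truth condition hold in $\M^d$ and $\M^d, \oPos \vDash B \rightarrow C$. For the direction $\Leftarrow$, assume $\M^d, \oPos \vDash B \rightarrow C$, giving $(\M^d, \oPos \nvDash B$ or $\M^d, \oPos \vDash C)$ together with $\R_\oPos^d(B, C)$. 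Since $\R_\oPos^d$ is the least relation closed under clauses (a) and (b), the pair $\R_\oPos^d(B, C)$ must have been introduced by one of them. If by (a), then $\M, \oPos \vDash B \rightarrow C$ and we are done. If by (b), then $\M, \oPos \vDash B \wedge \neg C$, that is $\M, \oPos \vDash B$ and $\M, \oPos \nvDash C$; but the induction hypothesis applied to the material part forces $\M, \oPos \nvDash B$ or $\M, \oPos \vDash C$, a contradiction. Hence clause (a) is the operative one and $\M, \oPos \vDash B \rightarrow C$.

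The argument carries no substantial obstacle, being essentially a transcription of Lemma \ref{can_mod1-2LemmaBCL} into the modal vocabulary. The single point demanding care is to verify that clause (b) cannot introduce a spurious relating pair whenever the arrow is actually satisfied in $\M^d$: this is precisely ruled out because the truth of $B \rightarrow C$ entails the truth of the underlying material implication $B \supset C$, whereas clause (b) activates only when $\M, \oPos \vDash B \wedge \neg C$, i.e. exactly when that material implication fails. This built-in incompatibility between clause (b) and the material component of the connexive arrow is what guarantees that passing to the least relation $\R_\oPos^d$ is semantically conservative, and it is the modal counterpart of the same mechanism exploited in the non-modal lemma.
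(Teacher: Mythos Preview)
Your proof is correct and follows essentially the same approach as the paper: induction on formula complexity, noting that only the connexive arrow case is non-trivial because $\M$ and $\M^d$ share $\cPos$, $\rAvail$, and $\eVal$, and then handling that case by the two-clause analysis of $\R_\oPos^d$ with clause (b) ruled out via the contradiction between $\M,\oPos\vDash B\wedge\neg C$ and the material component carried over by the induction hypothesis. The paper simply omits the explicit modal cases you spell out, but otherwise the arguments coincide.
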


\begin{proof}
	We proceed by induction over the complexity of formulas. The only non-trivial case is the one concerning the connexive arrow, since the two models share the same valuation $\eVal$ and accessibility relation $\rAvail$. 
	
	\textit{Let $\A = \B \rightarrow \C$.} `\textit{$\Rightarrow$}'. Suppose that $\M, \oPos \vDash \B \rightarrow \C$. Then ($\M, \oPos \nvDash \B$ or $\M, \oPos \vDash \C$) and $\R_\oPos(\B,\C)$. By induction hypothesis, $\M^d, \oPos \nvDash \B$ or $\M^d, \oPos \vDash \C$. By definition \ref{CanonicalModelMBCLCUNDLsecond}, $\M, \oPos \vDash \B \rightarrow \C$ implies $\R_\oPos^d(\B,\C)$. We obtain $\M^d, \oPos \vDash \B \rightarrow \C$.
	
	`\textit{$\Leftarrow$}'. Suppose that $\M^d, \oPos \vDash \B \rightarrow \C$. Then ($\M^d, \oPos \nvDash \B$ or $\M^d, \oPos \vDash \C$) and $\R_\oPos^d (\B,\C)$. By definition \ref{CanonicalModelMBCLCUNDLsecond}, $\R^d (\B,\C)$ can hold only if either (a) $\M, \oPos \vDash \B \rightarrow \C$ or (b) $\M, \oPos \vDash d(B) \to d(C)$ and $\M, \oPos \vDash B \land \neg C$. If (a) holds, we are done. If (b) is the case, since $\M^d, \oPos \nvDash \B$ or $\M^d, \oPos \vDash \C$, by induction hypothesis $\M, \oPos \nvDash B$ or $\M, \oPos \vDash C$, which contradicts $\M, \oPos \vDash B \land \neg C$. Therefore (a) must hold.
\end{proof}

\begin{lm}
	Let $\M^d$ be the second canonical model for $\acMBCLCuNDL$. Then $\M^d \in \cModBCL{\cRelMBCLDEML}$.
\end{lm}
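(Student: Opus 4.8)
The plan is to mirror the proof of Lemma~\ref{ClassLemmaBCL}, transported to the modal, demodalizing setting. Let $\M = \langle \cPos, \rAvail, \{\R_{\oPos}\}_{\oPos \in \cPos}, \eVal\rangle$ be the first canonical model for $\acMBCLCuNDL$ and $\M^d$ the second one built from it (Definition~\ref{CanonicalModelMBCLCUNDLsecond}). I must show that every relation $\R^d_\oPos$ satisfies the $\MBCL$ connexive conditions $\scAi$, $\scAii$, $\scBo$, $\scBi$, $\scBii$ together with $\scDemL$. Throughout, the key device is the dichotomy supplied by Definition~\ref{CanonicalModelMBCLCUNDLsecond}: every instance of $\R^d_\oPos(\A,\B)$ arises either (a) because $\M,\oPos \vDash \A \rightarrow \B$, or (b) because $\M,\oPos \vDash \fDem(\A) \rightarrow \fDem(\B)$ while $\M,\oPos \vDash \A \wedge \neg \B$. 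I will use freely the canonical valuation lemma~\ref{CanonicalLemmaMBCLCUNDL}, so that truth at $\oPos$ in $\M$ coincides with membership in $\oPos$, and the idempotence of demodalization $\fDem(\fDem(\X)) = \fDem(\X)$.

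First I would dispatch $\scDemL$, the characteristic new condition. Assume $\R^d_\oPos(\fDem(\A), \fDem(\B))$. By the dichotomy this holds either via (a), giving $\M,\oPos \vDash \fDem(\A) \rightarrow \fDem(\B)$ directly, or via (b), which by idempotence of $\fDem$ again yields $\M,\oPos \vDash \fDem(\A) \rightarrow \fDem(\B)$. In either case $\fDem(\A) \rightarrow \fDem(\B) \in \oPos$, so by $\aCUNDL$ and maximal consistency $\M,\oPos \vDash (\A \rightarrow \B) \vee (\A \wedge \neg \B)$. If the first disjunct holds, clause (a) gives $\R^d_\oPos(\A,\B)$; if the second holds, then together with $\M,\oPos \vDash \fDem(\A) \rightarrow \fDem(\B)$ clause (b) gives $\R^d_\oPos(\A,\B)$. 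This establishes $\scDemL$.

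Next I would treat the connexive conditions by the same case split. The positive conditions $\scBi$, $\scBii$ are immediate: since $\aBi$, $\aBii$ are theorems, the required implications are true at $\oPos$ in $\M$, and clause (a) places the pairs $(\A \rightarrow \B, \neg(\A \rightarrow \neg \B))$ and $(\A \rightarrow \neg \B, \neg(\A \rightarrow \B))$ into $\R^d_\oPos$. For the negative conditions I would argue by contradiction, excluding both ways a forbidden pair could enter. For $\scAi$, a pair $(\A,\neg\A)$ cannot enter via (a) because $\aAi$ gives $\M,\oPos \nvDash \A \rightarrow \neg \A$; and it cannot enter via (b) because, using $\fDem(\neg\A) = \neg\fDem(\A)$, clause (b) would require $\M,\oPos \vDash \fDem(\A) \rightarrow \neg\fDem(\A)$, again excluded by the instance of $\aAi$ at $\fDem(\A)$. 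The case $\scAii$ is symmetric, using $\aAii$.

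The hard part will be $\scBo$, precisely because the pairs added by clause (b) record the demodalization closure and may collide with pairs coming from clause (a). Suppose towards a contradiction $\R^d_\oPos(\A,\B)$ and $\R^d_\oPos(\A,\neg\B)$. When both pairs come from clause (a), $\aBi$ closes the case as in the first canonical model; when both come from clause (b), one applies $\aBii$ to the demodalized formulas $\fDem(\A),\fDem(\B)$. The genuinely delicate configuration is the mixed one, where $\R^d_\oPos(\A,\B)$ comes from (a), i.e. $\A \rightarrow \B \in \oPos$, while $\R^d_\oPos(\A,\neg\B)$ comes from (b), i.e. $\fDem(\A) \rightarrow \neg\fDem(\B) \in \oPos$ with $\M,\oPos \vDash \A \wedge \B$ so that material implication $\A \supset \B$ still holds. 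Here the contradiction has to be extracted by linking the modalized implication $\A \rightarrow \B$ to its demodalization, and this is the step I expect to demand the most care: $\aCUNDL$ only travels from demodalized to modalized formulas, so closing this case needs either a further use of the connexive theorems on $\fDem(\A),\fDem(\B)$ or a sharpening of the interaction between $\aCUNDL$ and $\aBi$/$\aBii$. Once this mixed configuration is secured, the remaining routine verifications follow the pattern of Lemma~\ref{ClassLemmaBCL} and Klonowski's proposition, completing the membership $\M^d \in \cModBCL{\cRelMBCLDEML}$.
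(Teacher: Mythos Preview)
Your treatment of $\scDemL$ is exactly the paper's argument: reduce clause~(b) to clause~(a) via idempotence of $\fDem$, then invoke $\aCUNDL$ and the case split on the resulting disjunction. For the connexive conditions the paper takes a different route from you: it does not spell anything out and simply says ``the proof is an adaptation to the modal setting of \cite{Klonowski2}, proposition~4.11.'' Your explicit verifications of $\scAi$, $\scAii$, $\scBi$, $\scBii$ are correct and go beyond what the paper writes.

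The genuine gap is the one you yourself flag in $\scBo$. The mixed configuration---say $\R^d_\oPos(\A,\B)$ from clause~(a) (so $\A\rightarrow\B\in\oPos$) and $\R^d_\oPos(\A,\neg\B)$ from clause~(b) (so $\fDem(\A)\rightarrow\neg\fDem(\B)\in\oPos$ together with $\A,\B\in\oPos$)---is not closed by the tools you list. The only bridge between modalized and demodalized implications in $\acMBCLCuNDL$ is $\aCUNDL$, and it runs in the wrong direction: from $\fDem(\A)\rightarrow\neg\fDem(\B)$ it yields only $(\A\rightarrow\neg\B)\vee(\A\wedge\B)$, and since $\A\wedge\B\in\oPos$ already, nothing new follows. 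The connexive axioms applied at the demodalized level give $\fDem(\A)\rightarrow\fDem(\B)\notin\oPos$, but there is no axiom taking $\A\rightarrow\B$ down to $\fDem(\A)\rightarrow\fDem(\B)$ to produce a clash. Concretely, with $\A=\Box p$, $\B=q$, the set $\{\Box p\rightarrow q,\ p\rightarrow\neg q,\ \Box p,\ q\}$ does not appear to be refutable from the axioms you have isolated, yet it would yield both $\R^d_\oPos(\Box p,q)$ and $\R^d_\oPos(\Box p,\neg q)$. The symmetric mixed case (b)+(a) is stuck for the same reason.

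So your proposal is incomplete precisely where you say it is; the ``sharpening'' you hope for is not available in the calculus as stated. Note also that the paper's deferral to Klonowski is not obviously a fix: proposition~4.11 there concerns the second canonical model built for closure under $\scCUN$, where the analogue of clause~(b) interacts with $\aCUNi$ and crucially with the iterated axiom $\aCUNii$. There is no counterpart of $\aCUNii$ for demodalization in $\acMBCLCuNDL$, so the ``adaptation'' the paper invokes is exactly the step you are unable to carry out. Either an additional argument specific to demodalization is needed, or the definition of $\R^d_\oPos$ must be refined; your write-up should not leave this as a promissory note.
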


\begin{proof}
	Let $\M = \langle \cPos, \rAvail$, $\{\R_{\oPos}\}_{\oPos \in \cPos}$, $\eVal \rangle$. be the first canonical model for $\acMBCLCuNDL$, and $\M^d$ the second one. Assume $\R_\oPos^d(d(A),d(B))$. By definition \ref{CanonicalModelMBCLCUNDLsecond}, this holds only in two cases: either (a) $\M, \oPos \vDash d(A) \rightarrow d(B)$ or (b) $d(A) = A_1, d(B) = B_1, \M, \oPos \vDash d(A_1) \to d(B_1)$ and $\M \vDash A_1 \land \neg B_1$. 
	
	If (a) holds, by properties of maximal consistent sets, $\M, \oPos \vDash d(A) \rightarrow d(B)$ implies $\M, \oPos \vDash (A \rightarrow B) \lor (A \land \neg B)$ by $\aCUNDL$. By the same properties, $\M, \oPos \vDash A \rightarrow B$ or $\M, \oPos \vDash A \land \neg B$. In the former case $\R_\oPos (A,B)$ follows, in the latter as well by definition \ref{CanonicalModelMBCLCUNDLsecond}.
	
	If we consider (b), notice that the demodalization $d$ is obviously an idempotent function, i.e. $d(P) = d(d(P))$. Therefore $d(A) = d(d(A)) = d(A_1)$, and similarly $d(B) = d(B_1)$. Hence $\M, \oPos \vDash d(A_1) \to d(B_1)$ is the same as $\M, \oPos \vDash d(A) \to d(B)$, which is subsumed under case (a).
	
	For all the other properties, the proof is an adaptation to the modal setting of \cite{Klonowski2}, proposition 4.11.
\end{proof}

We can finally prove completeness, since the previous two facts prove that working with the first canonical model is enough to be guaranteed that everything that holds there holds in a model of the class $\cModMBCL\cRelMBCLDEML$.

\begin{tw}[Completeness Theorem for $\acMBCLCuNDL$]\label{completeness_MBCL_CuDL}
	For any $\Gamma \subseteq \fMBCL$, $\A \in \fMBCL$, $\Gamma \vDash_{\cRelMBCLDEML} \A$ $\Rightarrow$ $\Gamma \vdash_{\acMBCLCuNDL} \A$.
\end{tw}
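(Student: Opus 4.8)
The plan is to prove the contrapositive, mirroring exactly the completeness arguments already carried out for $\acBCLGCUN$ and $\acMBCLGCUN$. Assume $\Gamma \nvdash_{\acMBCLCuNDL} \A$. Then $\Gamma \cup \{\neg \A\}$ is $\acMBCLCuNDL$-consistent (this is the usual step, relying on the presence of $\CPL$ and closure under the disjunctive syllogism in the calculus), so by Lindenbaum's lemma it extends to a maximal consistent set $\Delta \in \cMBCLCUNDLMax$. Since the set of worlds of the first canonical model is precisely $\cMBCLCUNDLMax$ (Definition \ref{CanonicalModelMBCLCUNDL}), the set $\Delta$ is itself a point of that model, and this is the world at which I would refute $\A$.

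Next I would invoke the three lemmas preceding the theorem, in sequence. First, the canonical valuation lemma (Lemma \ref{CanonicalLemmaMBCLCUNDL}) gives, for the first canonical model $\M$, that $\M, \Delta \vDash P \Leftrightarrow P \in \Delta$ for every $P \in \fMBCL$; since $\neg \A \in \Delta$ and $\Gamma \subseteq \Delta$, this yields $\M, \Delta \nvDash \A$ together with $\M, \Delta \vDash \B$ for every $\B \in \Gamma$. Second, the semantic-indistinguishability lemma (the unlabeled lemma immediately above, stating $\M, \oPos \vDash A \Leftrightarrow \M^d, \oPos \vDash A$ for all $\oPos$) transfers both facts to the second canonical model $\M^d$, so that $\M^d, \Delta \nvDash \A$ and $\M^d, \Delta \vDash \B$ for all $\B \in \Gamma$. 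Third, the class-membership lemma guarantees $\M^d \in \cModMBCL{\cRelMBCLDEML}$. Hence $\M^d$ is a model in the target class, based at a world where every premise in $\Gamma$ is true while $\A$ is false, which is exactly $\Gamma \nvDash_{\cRelMBCLDEML} \A$, completing the contraposition.

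Since all the genuine work has been absorbed into the three lemmas, I do not expect a real obstacle in this final assembly. The one point requiring care is the modal evaluation: unlike the non-modal case, validity here is truth at a world, so I must be explicit that the refuting point is $\Delta$ itself, a legitimate element of the canonical frame precisely because $\cPos = \cMBCLCUNDLMax$, rather than leaving the point implicit as one can for $\BCL$. Everything else is bookkeeping that carries over verbatim from the earlier completeness proofs.
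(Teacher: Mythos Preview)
Your proposal is correct and follows exactly the approach the paper intends: the paper does not spell out a proof for this theorem but states that it follows from the three preceding lemmas in the same way as the earlier completeness argument for $\acBCLGCUN$, which is precisely the contrapositive-plus-Lindenbaum assembly you describe. Your added remark about explicitly identifying the refuting world $\Delta$ in the modal canonical model is the only adaptation needed, and it is handled correctly.
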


As last result, we prove that $\acMBCLCuNDL$ is a decidable calculus, and as corollary when obtain the decidability of $\acMBCL, \acBCL$ and the other systems introduced in this article as well. In order to do that we use the completeness established in theorem \ref{completeness_MBCL_CuDL} and proceed via the filtration technique (see \cite{modal_logic_blue_book}). We are going to adapt that method to our relating semantics for $\acMBCLCuNDL$ and ultimately prove a form of finite model property. First we define what a set closed under subformulas is.

\begin{df}
	A set $\Gamma \subseteq \fMBCL$ is closed under subformulas if $\forall A,B\in\fMBCL$:
	\begin{itemize}
		\item $A \star B \in \Gamma \Rightarrow A \in \Gamma$ and $B \in \Gamma$, for $\circ \in \lbrace \land, \lor, \to \rbrace$
		
		\item $\ast A \in \Gamma \Rightarrow A \in \Gamma$, for $\circ \in \lbrace \neg, \Box, \diamond \rbrace$
	\end{itemize}
\end{df}

\noindent Notice that when a set of formulas is finite, its closure under subformulas is again finite.

Given a set $\Gamma$ closed under subformulas and a model $\M = \langle \cPos, \rAvail$, $\{\R_{\oPos}\}_{\oPos \in \cPos} \rangle$, we defined the relation $\sim_\Gamma$ $\subseteq \cPos\times\cPos$ as: 

$$\oPos_1 \sim_\Gamma \oPos_2 \Leftrightarrow \forall A,B\in\Gamma, ((v(\oPos_1,A)=1 \Leftrightarrow v(\oPos_2,A)=1) \textnormal{ and }$$$$R_{\oPos_1}(A,B) \Leftrightarrow R_{\oPos_2}(A,B))$$

$\sim_\Gamma$ is an equivalence relation that identifies worlds which are semantically indistinguishable for what it concerns the behaviour of the formulas in $\Gamma$, including how they are related. Now we can quotient the set of worlds and obtain a new model. Let us denote by $[\oPos]_\Gamma$ the equivalence class of $\oPos$ w.r.t. $\sim_\Gamma$. 

\begin{df}[Filtration]\label{filtration_def}
	Let $\M = \langle \cPos, \rAvail$, $\{\R_{\oPos}\}_{\oPos \in \cPos}, \eVal \rangle$ be a model belonging to $\cModBCL{\cRelMBCLDEML}$ and $\Gamma \subseteq \fMBCL$ be closed under subformulas. The filtration of $\M$ through $\Gamma$ is the model $\M^f_\Gamma = \langle \cPos^f, \rAvail^f$, $\{\R^f_{[\oPos]_\Gamma}\}_{[\oPos]_\Gamma \in \cPos^f}, \eVal^f \rangle$ defined as follows:
	\begin{itemize}
		\item $\cPos^f = \{ [\oPos]_\Gamma$ $|$ $\oPos\in\cPos \}$
		
		\item  $[\oPos_1]_\Gamma \rAvail^f [\oPos_2]_\Gamma \Leftrightarrow \exists\oPos_1'\in[\oPos_1]_\Gamma, \oPos_2'\in[\oPos_2]_\Gamma$ s.t. $\oPos_1'\rAvail\oPos_2'$
		
		\item  $\forall[\oPos]_\Gamma \in \cPos^f, \R^f_{[\oPos]_\Gamma} = \{ \langle A,B \rangle \in \R_\oPos$ $|$ $A,B\in\Gamma \}$
		
		\item $\eVal^f([\oPos]_\Gamma,p)=1 \Leftrightarrow \eVal(\oPos,p)=1$, for $p\in\Var$
	\end{itemize}
\end{df}

Both $\R^f$ and $\eVal^f$ are well-defined thanks to the definition of $\sim_\Gamma$. Observe that the filtration $\M^f_\Gamma$ is a model for $\lMBCL$ as in definition \ref{ModelMBCL}, but it is not necessarily closed under demodalization, and in particular if $\Gamma$ is finite then $\R^f$ will be finite as well, therefore not closed under $\scDemL$. This is not a problem, as we will see in a moment. First we need to extend the valuation $\eVal^f$ and prove that $\M$ and $\M^f_\Gamma$ satisfies the same formulas of $\Gamma$.

\begin{lm}\label{filtration_valuation}
	Let $\Gamma \subseteq \fMBCL$ be closed under subformulas. For all $A\in\Gamma,\oPos\in\cPos, \M,\oPos\vDash A \Leftrightarrow \M^f_\Gamma,[\oPos]_\Gamma\vDash A$.
\end{lm}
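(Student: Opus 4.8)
The plan is to argue by induction on the complexity of the formula $A \in \Gamma$, exploiting that $\Gamma$ is closed under subformulas: whenever $A \in \Gamma$ all of its immediate subformulas lie in $\Gamma$ as well, so the induction hypothesis is always available for them. The base case $A = p \in \Var$ is immediate from the definition of $\eVal^f$ in definition \ref{filtration_def}, since $\eVal^f([\oPos]_\Gamma,p)=1 \Leftrightarrow \eVal(\oPos,p)=1$. For the Boolean connectives $\neg, \wedge, \vee$ the argument is routine: the truth conditions of definition \ref{InterpretationMBCL} are determined componentwise from the truth values of subformulas, which are preserved by the induction hypothesis, and no appeal to $\rAvail^f$ or to the filtered relations is needed.

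The first genuinely new case is the relating implication $A = B \to C$. Here $B, C \in \Gamma$, so by the definition of the filtered relation $\R^f_{[\oPos]_\Gamma}$, which is exactly the restriction of $\R_\oPos$ to pairs from $\Gamma$, we have $\R^f_{[\oPos]_\Gamma}(B,C) \Leftrightarrow \R_\oPos(B,C)$. The $R$-agreement clause built into $\sim_\Gamma$ is precisely what guarantees that this does not depend on the chosen representative $\oPos$, i.e. that $\R^f$ is well defined. Combining this equivalence with the induction hypothesis applied to the material part (that $\M,\oPos\nvDash B$ or $\M,\oPos\vDash C$) yields $\M,\oPos\vDash B\to C \Leftrightarrow \M^f_\Gamma,[\oPos]_\Gamma\vDash B\to C$ directly from the $(\rightarrow)$ clause of definition \ref{InterpretationMBCL}.

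The main work, and the step I expect to be the obstacle, lies in the two modal cases, where the asymmetry of the minimal filtration surfaces. For $A = \Box B$ the direction $\M^f_\Gamma,[\oPos]_\Gamma\vDash\Box B \Rightarrow \M,\oPos\vDash\Box B$ is the \emph{easy} one: given $u$ with $\oPos\rAvail u$, the pair $\oPos,u$ itself witnesses $[\oPos]_\Gamma\rAvail^f[u]_\Gamma$, so $\M^f_\Gamma,[u]_\Gamma\vDash B$ and the induction hypothesis gives $\M,u\vDash B$. The converse is the delicate step: assume $\M,\oPos\vDash\Box B$ and $[\oPos]_\Gamma\rAvail^f[\oPos']_\Gamma$; unwinding $\rAvail^f$ provides witnesses $\oPos_1\sim_\Gamma\oPos$ and $\oPos_2\sim_\Gamma\oPos'$ with $\oPos_1\rAvail\oPos_2$. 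Since $\Box B\in\Gamma$ and $\oPos\sim_\Gamma\oPos_1$, the satisfaction agreement encoded in $\sim_\Gamma$ transfers $\M,\oPos_1\vDash\Box B$, whence $\M,\oPos_2\vDash B$; and as $B\in\Gamma$ and $\oPos_2\sim_\Gamma\oPos'$, we obtain $\M,\oPos'\vDash B$, so by induction hypothesis $\M^f_\Gamma,[\oPos']_\Gamma\vDash B$. The case $A = \Diamond B$ is entirely dual: there the easy direction is left-to-right (take the witness world and pass to its class), while the right-to-left direction again uses the minimal-filtration witnesses together with $\Diamond B, B \in \Gamma$ to pull the existential back into $\M$ across $\sim_\Gamma$.

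The crux to get right is exactly this transfer across $\sim_\Gamma$ in the hard modal directions: it hinges on reading $\sim_\Gamma$ as agreement on the satisfaction of \emph{all} $\Gamma$-formulas (and on the relating relation between them), not merely on variables, so that membership of $\Box B$ (resp. $\Diamond B$) in $\Gamma$ licenses moving the modal claim between $\sim_\Gamma$-equivalent worlds. Finally, I would stress that nothing in this lemma requires the filtration to be closed under $\scDemL$: the finite model $\M^f_\Gamma$ need not belong to $\cModMBCL{\cRelMBCLDEML}$, and reconciling it with the demodalization closure is deferred to the subsequent step, as the surrounding discussion anticipates.
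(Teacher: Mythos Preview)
Your proposal is correct and follows the same inductive strategy as the paper's proof; the treatment of the base case, the Boolean connectives, and the relating implication $\to$ matches the paper almost verbatim. If anything, your handling of the $\Box$ case is more explicit than the paper's: where the paper compresses both directions into a single biconditional and merely remarks that ``in the right-to-left direction we are able to cover all the successors of $\oPos$ thanks to the definition of $\sim_\Gamma$'', you spell out the witness extraction from $\rAvail^f$ and the two transfers across $\sim_\Gamma$ (first for $\Box B$, then for $B$), which is exactly what the paper's compressed sentence is gesturing at. Your observation that the argument requires reading $\sim_\Gamma$ as agreement on \emph{satisfaction} of all $\Gamma$-formulas, rather than merely on the valuation $v$ of variables, is well taken: the paper's formal definition of $\sim_\Gamma$ writes $v(\oPos_1,A)=1$ for arbitrary $A\in\Gamma$, which is only literally defined for variables, but the surrounding prose and the proof of lemma~\ref{filtration_finite_lemma} make clear that full satisfaction agreement is intended, and that is the reading your argument (and the paper's) relies on.
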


\begin{proof}
	By induction over the complexity of formulas. The base case for variables is covered by the definition \ref{filtration_def}. The Boolean cases are straightforward.
	
	\textit{Let $A = \Box B$.} $\M,\oPos\vDash \Box B$ holds iff $\forall\oPos'\in\cPos, \oPos\rAvail\oPos'$ implies $\M,\oPos\vDash B$. By induction hypothesis and the fact that $B\in\Gamma$, the latter condition obtains iff $\forall[\oPos]_\Gamma'\in\cPos^f, [\oPos]_\Gamma\rAvail^f[\oPos]'_\Gamma$ implies $\M^f_\Gamma,[\oPos]'_\Gamma\vDash B$. Notice that in the right-to-left direction we are able to cover all the successors of $\oPos$ thanks to the definition of $\sim_\Gamma$. Finally $\forall[\oPos]_\Gamma'\in\cPos^f, [\oPos]_\Gamma\rAvail^f[\oPos]'_\Gamma$ implies $\M^f_\Gamma,[\oPos]'_\Gamma\vDash B$ holds iff $\M^f_\Gamma,[\oPos]\vDash \Box B$.
	
	\vspace{0.2cm}
	
	\textit{Let $A = B \rightarrow C$.} $\M,\oPos\vDash B \rightarrow C$ holds iff ($\M,\oPos\nvDash B$ or $\M,\oPos\vDash C$) and $\R_\oPos(B,C)$. By induction hypothesis, ($\M,\oPos\nvDash B$ or $\M,\oPos\vDash C$) iff ($\M^f_\Gamma,[\oPos]_\Gamma\nvDash B$ or $\M^f_\Gamma,[\oPos]_\Gamma\vDash C$). By hypothesis $B,C\in\Gamma$, therefore by definition \ref{filtration_def} we have $\R_\oPos(B,C)$ iff $\R^f_{[\oPos]_\Gamma}(B,C)$. We conclude that ($\M^f_\Gamma,[\oPos]_\Gamma\nvDash B$ or $\M^f_\Gamma,[\oPos]_\Gamma\vDash C$) and $\R^f_{[\oPos]_\Gamma}(B,C)$ iff $\M^f_\Gamma,[\oPos]_\Gamma\vDash B \to C$.
\end{proof}

Now we are ready to address the problem that $\M^f_\Gamma$ doesn't have to belong to $\cModBCL{\cRelMBCLDEML}$. In fact for every filtration $\M^f_\Gamma$ it is possible to build a model $\M^+ \in\cModBCL{\cRelMBCLDEML}$ which is equivalent as it concerns the formulas $A\in\Gamma$.

\begin{df}\label{filtration_def_m+}
	Let $\M = \langle \cPos, \rAvail$, $\{\R_{\oPos}\}_{\oPos \in \cPos}, \eVal \rangle \in \cModBCL{\cRelMBCLDEML}$, let $\Gamma \subseteq \fMBCL$ be closed under subformulas and $\M^f_\Gamma = \langle \cPos^f, \rAvail^f$, $\{\R^f_{[\oPos]_\Gamma}\}_{[\oPos]_\Gamma \in \cPos^f}, \eVal^f \rangle$ the filtration of $\M$ through $\Gamma$. $\M^+_\Gamma = \langle \cPos^f, \rAvail^f$, $\{\R^+_{[\oPos]_\Gamma}\}_{[\oPos]_\Gamma \in \cPos^f}, \eVal^f \rangle$ differs from $\M^f_\Gamma$ only for:
	\begin{itemize}  
		\item  $\forall[\oPos]_\Gamma \in \cPos^f, \R^+_{[\oPos]_\Gamma} = \R^f_{[\oPos]_\Gamma} \cup \{ \langle A,B \rangle$ $|$ $\langle d(A),d(B) \rangle\in\R^f_{[\oPos]_\Gamma} \}$
	\end{itemize}
	\noindent $\M^+_\Gamma$ is closed under $\scDemL$, therefore it belongs to $\cModBCL{\cRelMBCLDEML}$.
\end{df}

\begin{lm}\label{filtration_equivalence_m+}
	Let $\Gamma \subseteq \fMBCL$ be closed under subformulas. For all $A\in\Gamma,\oPos\in\cPos, \M^f_\Gamma,[\oPos]_\Gamma\vDash A \Leftrightarrow \M^+_\Gamma,[\oPos]_\Gamma\vDash A$.
\end{lm}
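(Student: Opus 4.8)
The plan is to argue by induction on the complexity of $A \in \Gamma$, exploiting that $\M^f_\Gamma$ and $\M^+_\Gamma$ differ only in their families of relating relations. By construction the two models share the same set of worlds $\cPos^f$, the same accessibility relation $\rAvail^f$, and the same valuation $\eVal^f$, and moreover $\R^f_{[\oPos]_\Gamma} \subseteq \R^+_{[\oPos]_\Gamma}$ for every class $[\oPos]_\Gamma \in \cPos^f$. First I would dispatch the base case of variables together with the Boolean cases and the two modal cases $A = \Box B$ and $A = \Diamond B$: each of these depends only on the shared valuation, the shared accessibility relation, and the truth values of proper subformulas of $A$ (which again lie in $\Gamma$ by closure under subformulas), so they follow from the induction hypothesis exactly as in lemma \ref{filtration_valuation}.

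All the content of the argument concentrates in the case $A = B \to C$. Closure of $\Gamma$ under subformulas gives $B, C \in \Gamma$, and the truth condition for the connexive arrow at $[\oPos]_\Gamma$ splits into a material part and a relational part $\R_{[\oPos]_\Gamma}(B,C)$. The material part takes the same truth value in both models by the induction hypothesis applied to $B$ and $C$, so the whole case reduces to showing that the relating relations coincide on members of $\Gamma$, that is $\R^f_{[\oPos]_\Gamma}(B,C) \Leftrightarrow \R^+_{[\oPos]_\Gamma}(B,C)$ whenever $B, C \in \Gamma$.

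The left-to-right direction is immediate from $\R^f_{[\oPos]_\Gamma} \subseteq \R^+_{[\oPos]_\Gamma}$. For the converse, which I expect to be the crux, I would suppose $\R^+_{[\oPos]_\Gamma}(B,C)$ and appeal to definition \ref{filtration_def_m+}: either already $\R^f_{[\oPos]_\Gamma}(B,C)$, in which case we are done, or the pair was freshly added because $\langle d(B), d(C) \rangle \in \R^f_{[\oPos]_\Gamma}$. In the latter situation the decisive observation is that the original model $\M$ from which the filtration was taken lies in $\cModBCL{\cRelMBCLDEML}$, so each $\R_\oPos$ satisfies $\scDemL$. Since $\langle d(B), d(C) \rangle \in \R^f_{[\oPos]_\Gamma}$ entails $\langle d(B), d(C) \rangle \in \R_\oPos$, condition $\scDemL$ yields $\langle B, C \rangle \in \R_\oPos$; and because $B, C \in \Gamma$, the definition of the filtration relation returns $\langle B, C \rangle \in \R^f_{[\oPos]_\Gamma}$. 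Thus in both cases $\R^f_{[\oPos]_\Gamma}(B,C)$ holds, which closes the equivalence and hence the inductive step.

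The main obstacle is precisely the recognition that the pairs artificially added to $\R^+$ in order to force closure under $\scDemL$ cannot change the truth value of any formula of $\Gamma$: any such added pair whose two components already belong to $\Gamma$ was in fact already present in $\R^f$, by virtue of the $\scDemL$-closure of $\M$. Once this is in place the relating relations are indistinguishable on $\Gamma$, and, since every other ingredient of the two models already coincides, the induction goes through without further effort.
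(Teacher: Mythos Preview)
Your proposal is correct and follows essentially the same approach as the paper: an induction on complexity in which the only nontrivial case is $A = B \to C$, reduced to showing that $\R^f_{[\oPos]_\Gamma}$ and $\R^+_{[\oPos]_\Gamma}$ agree on pairs from $\Gamma$ via the $\scDemL$-closure of the original model $\M$. The paper phrases the key step as a short proof by contradiction whereas you do a direct case split on the definition of $\R^+$, but the logical content is identical.
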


\begin{proof}
	By induction over the complexity of formulas. Since $\M^f_\Gamma$ and $\M^+_\Gamma$ differ only for the relating relation, the only interesting case is that for $\to$.
	
	\textit{Let $A = B \to C$.} Suppose $\M^+_\Gamma,[\oPos]_\Gamma\vDash B \to C$, then ($\M^+_\Gamma,[\oPos]_\Gamma\nvDash B$ or $\M^+_\Gamma,[\oPos]_\Gamma\vDash C$) and $\R^+(B,C)$. By induction hypothesis, also $\M^f_\Gamma,[\oPos]_\Gamma\nvDash B$ or $\M^f_\Gamma,[\oPos]_\Gamma\vDash C$. Since $B,C\in\Gamma$, then $\R^+(B,C)$ implies $\R^f(B,C)$. In fact if $\sim\R^f(B,C)$, by definition of $\R^+$ we have $\R^f(d(B),d(C))$, hence $\R(d(B),d(C))$ by definition \ref{filtration_def}. Since $\M$ is closed under $\scDemL$, it holds $\R(B,C)$ as well, but $B,C\in\Gamma$, so $\R^f(B,C)$, contradiction. Therefore it must be $\R^f(B,C)$. We conclude that $\M^f_\Gamma,[\oPos]_\Gamma\vDash B \to C$. The other direction is immediate, since $\R^f_{[\oPos]_\Gamma} \subseteq \R^+_{[\oPos]_\Gamma}$.
\end{proof}

We are left to prove that when $\Gamma$ is finite the model $\M^f$ is finite.

\begin{lm}\label{filtration_finite_lemma}
	Let finite $\Gamma \subset \fMBCL$ be closed under subformulas. Given a model $\M$, its filtration $\M^f_\Gamma$ is a finite model. 
\end{lm}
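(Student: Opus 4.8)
The plan is to show finiteness by bounding the number of equivalence classes of $\sim_\Gamma$ through a combinatorial "type" argument, and then checking that the remaining components of the filtration are finite objects as well. By definition \ref{filtration_def}, the worlds of $\M^f_\Gamma$ are exactly the $\sim_\Gamma$-equivalence classes of $\cPos$, so it suffices to prove that $\sim_\Gamma$ has only finitely many classes whenever $\Gamma$ is finite.

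To this end I would associate to each $\oPos \in \cPos$ its \emph{$\Gamma$-type}, namely the pair
$$\tau(\oPos) = \big( \{ A \in \Gamma : \M, \oPos \vDash A \},\ \{ \langle A, B \rangle \in \Gamma \times \Gamma : \R_\oPos(A,B) \} \big).$$
The key observation, which is immediate from the definition of $\sim_\Gamma$, is that $\oPos_1 \sim_\Gamma \oPos_2$ holds if and only if $\tau(\oPos_1) = \tau(\oPos_2)$: the first coordinate records the agreement on truth values of formulas of $\Gamma$, and the second records the agreement on relatedness of pairs from $\Gamma$. Consequently $\tau$ descends to a well-defined \emph{injection} $[\oPos]_\Gamma \mapsto \tau(\oPos)$ from $\cPos^f$ into $\mathcal{P}(\Gamma) \times \mathcal{P}(\Gamma \times \Gamma)$.

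Since $\Gamma$ is finite, the codomain $\mathcal{P}(\Gamma) \times \mathcal{P}(\Gamma \times \Gamma)$ is a finite set (of cardinality $2^{|\Gamma|} \cdot 2^{|\Gamma|^2}$), so the set of realized types is finite, and by injectivity $\cPos^f$ is finite. It then remains to note that every other ingredient of $\M^f_\Gamma$ is finite: the accessibility relation $\rAvail^f \subseteq \cPos^f \times \cPos^f$ is finite because $\cPos^f$ is, and each relating relation $\R^f_{[\oPos]_\Gamma}$ is by construction a subset of $\Gamma \times \Gamma$, hence finite. The valuation $\eVal^f$ poses no problem, since only the finitely many worlds of $\cPos^f$ and the finitely many variables occurring in $\Gamma$ are relevant to the behaviour of formulas in $\Gamma$.

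This argument is essentially a counting argument, so I do not expect a serious obstacle. The only point demanding care is the verification that the type map is injective on equivalence classes, i.e.\ that having the same $\Gamma$-type is \emph{exactly} the relation $\sim_\Gamma$; but this is nothing more than a restatement of the definition of $\sim_\Gamma$ given just before definition \ref{filtration_def}. Combined with the closure of $\Gamma$ under subformulas (which keeps $\Gamma$ finite), this yields the desired finiteness of $\M^f_\Gamma$.
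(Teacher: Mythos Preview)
Your argument is correct and follows the same overall strategy as the paper: exhibit an injection from $\cPos^f$ into a finite set built out of $\Gamma$, then observe that the remaining data of the filtration are finite as well. The paper's proof defines $g\colon \cPos^f \to \mathcal{P}(\Gamma)$ by $g([\oPos]_\Gamma)=\{A\in\Gamma : \M^f_\Gamma,[\oPos]_\Gamma\vDash A\}$ and argues injectivity directly, concluding $|\cPos^f|\le 2^{|\Gamma|}$.

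The one genuine difference is that you take the codomain to be $\mathcal{P}(\Gamma)\times\mathcal{P}(\Gamma\times\Gamma)$ rather than $\mathcal{P}(\Gamma)$, recording both the truth set and the $\R$-restriction. This is the safer choice: the equivalence $\sim_\Gamma$ is defined by agreement on \emph{both} truth values and relatedness of pairs from $\Gamma$, and since $\Gamma$ is only closed under subformulas, a pair $A,B\in\Gamma$ need not have $A\to B\in\Gamma$, so agreement on the satisfaction of formulas in $\Gamma$ does not by itself force agreement on $\R_\oPos\!\upharpoonright\!(\Gamma\times\Gamma)$. Your larger codomain makes the injectivity claim transparently a restatement of the definition of $\sim_\Gamma$, at the modest cost of a weaker cardinality bound $2^{|\Gamma|+|\Gamma|^2}$ instead of $2^{|\Gamma|}$. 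Either bound suffices for the lemma.
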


\begin{proof}
	Let us consider a function $g: \cPos^f \to \mathcal{P}(\Gamma)$, where $\mathcal{P}(\Gamma)$ is the powerset of $\Gamma$, defined by $g([\oPos]_\Gamma) = \{ A\in\Gamma$ $|$ $\M^f_\Gamma,[\oPos]_\Gamma\vDash A \}$. $g$ is injective, in fact if $g([\oPos_1]_\Gamma)=g([\oPos_2]_\Gamma)$ it means that $\forall A\in\Gamma, \M^f_\Gamma,[\oPos_1]_\Gamma\vDash A$ iff $\M^f_\Gamma,[\oPos_2]_\Gamma\vDash A$, that is $\oPos_1 \sim_\Gamma \oPos_2$, which amounts to $[\oPos_1]_\Gamma = [\oPos_2]_\Gamma$. Therefore, if we denote by $|.|$ the cardinality of a set, we have $|\cPos^f| \leq |\mathcal{P}(\Gamma)| = 2^{|\Gamma|}$.
	
	\noindent Furthermore $\{\R^f_{[\oPos]_\Gamma}\}_{[\oPos]_\Gamma \in \cPos^f}$ is a finite collection of relations, since $\cPos^f$ is finite, and each $\R^f_{[\oPos]_\Gamma}$ is finite, since $\langle A,B \rangle \in \R^f$ only if $A,B\in\Gamma$, which is a finite set of formulas. 
\end{proof}

\begin{tw}\label{filtration_finite_model_property}
	If a formula $A \in \fMBCL$ is satisfiable in $\cRelMBCLDEML$, it is satisfiable in a finite model which is equivalent w.r.t. to $A$ to a model belonging to $\cModMBCL{\cRelMBCLDEML}$.
\end{tw}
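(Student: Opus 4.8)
The plan is to assemble the three filtration lemmas just established into a standard finite model argument. First I would unpack the hypothesis: if $A$ is satisfiable in $\cRelMBCLDEML$, then there is a model $\M = \langle \cPos, \rAvail, \{\R_{\oPos}\}_{\oPos \in \cPos}, \eVal \rangle \in \cModMBCL{\cRelMBCLDEML}$ together with a world $\oPos \in \cPos$ such that $\M, \oPos \vDash A$.

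Next I would fix the filtering set. Let $\Gamma$ be the closure under subformulas of the singleton $\{A\}$. Since $A$ is a single formula, $\Gamma$ is finite, and by construction it is closed under subformulas, so it is an admissible choice for Definition \ref{filtration_def} and for all the subsequent lemmas. Crucially $A \in \Gamma$, because every formula is a subformula of itself; this is what will let me instantiate the biconditionals of Lemmas \ref{filtration_valuation} and \ref{filtration_equivalence_m+} at $A$ itself.

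Then I would run the filtration through $\Gamma$. Building $\M^f_\Gamma$ as in Definition \ref{filtration_def}, Lemma \ref{filtration_valuation} gives $\M, \oPos \vDash A$ iff $\M^f_\Gamma, [\oPos]_\Gamma \vDash A$, hence $\M^f_\Gamma, [\oPos]_\Gamma \vDash A$; and Lemma \ref{filtration_finite_lemma}, applied to the finite $\Gamma$, guarantees that $\M^f_\Gamma$ is a finite model. This already exhibits $A$ as satisfiable in a finite model. Finally, to recover a model of the target class, I would build $\M^+_\Gamma$ as in Definition \ref{filtration_def_m+}: it shares the finite world-set $\cPos^f$ of $\M^f_\Gamma$, it is closed under $\scDemL$, and therefore $\M^+_\Gamma \in \cModMBCL{\cRelMBCLDEML}$. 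By Lemma \ref{filtration_equivalence_m+} we have $\M^f_\Gamma, [\oPos]_\Gamma \vDash A$ iff $\M^+_\Gamma, [\oPos]_\Gamma \vDash A$, so $\M^+_\Gamma, [\oPos]_\Gamma \vDash A$ as well. Thus the finite filtration $\M^f_\Gamma$ satisfies $A$ and is equivalent with respect to $A$ (indeed with respect to every formula of $\Gamma$) to the model $\M^+_\Gamma$, which belongs to $\cModMBCL{\cRelMBCLDEML}$, exactly as the statement requires.

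As for difficulty, there is essentially no obstacle remaining at this stage: all the substantive content — the valuation-preservation property of filtration, the $\scDemL$-repair performed by passing from $\M^f_\Gamma$ to $\M^+_\Gamma$, and the finiteness count — has already been discharged in the preceding lemmas. The only points that require care are bookkeeping ones, namely ensuring that the chosen $\Gamma$ is simultaneously finite and subformula-closed so that the three lemmas all apply to one and the same $\Gamma$, and noticing that $A \in \Gamma$ so that the relevant biconditionals may be read off directly at $A$.
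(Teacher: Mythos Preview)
Your proposal is correct and follows essentially the same route as the paper's own proof: take $\Gamma$ to be the subformula closure of $A$, filter $\M$ through $\Gamma$, invoke Lemmas \ref{filtration_valuation} and \ref{filtration_finite_lemma} to get a finite model satisfying $A$, and then Lemma \ref{filtration_equivalence_m+} together with Definition \ref{filtration_def_m+} to produce the equivalent $\M^+_\Gamma \in \cModMBCL{\cRelMBCLDEML}$. Your version is simply more explicit about the bookkeeping (the world $\oPos$, the fact that $A\in\Gamma$, the finiteness of $\Gamma$), but the argument is the same.
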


\begin{proof}
	Let $A$ be satisfied by $\M \in \cModMBCL{\cRelMBCLDEML}$. Consider the closure of $A$ under subformula $\Gamma$ and the filtration $\M^f_\Gamma$. By lemmas \ref{filtration_valuation} and \ref{filtration_finite_lemma}, $A$ is satisfied by $\M^f_\Gamma$, which is a finite model. By lemma \ref{filtration_equivalence_m+}, $\M^f_\Gamma$ is equivalent w.r.t. $A$ to $\M^+_\Gamma \in \cModMBCL{\cRelMBCLDEML}$ as defined in definition \ref{filtration_def_m+}.
\end{proof}

\begin{wn}
	The logic $\acMBCLCuNDL$ is decidable.
\end{wn}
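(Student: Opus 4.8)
The plan is to combine the finite model property just established with soundness and completeness to turn theoremhood into a terminating search. First I would reduce the problem: by the Soundness Theorem \ref{SoundnessMBCLDEML} and the Completeness Theorem \ref{completeness_MBCL_CuDL} we have $\vdash_{\acMBCLCuNDL} A$ iff $\vDash_{\cRelMBCLDEML} A$, and the latter fails precisely when $\neg A$ is satisfiable in $\cModMBCL{\cRelMBCLDEML}$ at some world. Deciding theoremhood of $A$ therefore amounts to deciding satisfiability of $\neg A$ in the class.

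Next I would set up the bounded search. Given $A$, compute the subformula closure $\Gamma$ of $\neg A$; this is a finite set whose cardinality is computable from $A$. By Theorem \ref{filtration_finite_model_property}, $\neg A$ is satisfiable in $\cModMBCL{\cRelMBCLDEML}$ iff it is satisfiable in the filtration $\M^f_\Gamma$ of some model of the class, and by Lemma \ref{filtration_finite_lemma} such a filtration has at most $2^{|\Gamma|}$ worlds with each relating relation restricted to pairs drawn from $\Gamma$. Hence, up to isomorphism, there are only finitely many candidate finite structures, all of bounded size, and for each of them evaluating $\neg A$ at a given world is a terminating computation: membership in the $\scDemL$-closure $\R^+$ reduces to a finite check, since the demodalization $\fDem$ is computable and $\R^f$ is finite. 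The procedure enumerates these finitely many structures together with the closures $\M^+_\Gamma$ of Definition \ref{filtration_def_m+}, and answers ``$A$ is not a theorem'' exactly when one of them falsifies $A$.

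Correctness splits into the two directions. If $A$ is not a theorem, then $\neg A$ is satisfiable in the class, and Theorem \ref{filtration_finite_model_property} guarantees a falsifying structure within the bound, so the search finds it. Conversely, if the search exhibits a finite structure falsifying $A$, then by Lemma \ref{filtration_equivalence_m+} the associated $\M^+_\Gamma$ lies in $\cModMBCL{\cRelMBCLDEML}$ and falsifies $A$ there, so $A$ is not valid and, by soundness, not a theorem. Thus ``$A$ is a theorem iff no structure within the bound falsifies it'' is a decidable equivalence, and decidability of $\acMBCLCuNDL$ follows.

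The main obstacle is justifying that the search is genuinely finite and faithful, rather than the reduction itself, which is routine. Concretely one must pin down a recursive bound on the size of the structures to be examined (supplied by Lemma \ref{filtration_finite_lemma}), verify that truth of formulas of $\Gamma$ in a candidate structure and in its $\scDemL$-closure is effectively computable and agrees with truth in the class (supplied by Lemmas \ref{filtration_valuation} and \ref{filtration_equivalence_m+}), and confirm that each $\M^+_\Gamma$ really belongs to the target class, so that a finite counter-structure genuinely witnesses non-validity. Once these three points are secured, the decision procedure is complete and $\acMBCLCuNDL$ is decidable; the decidability of $\acMBCL$, $\acBCL$, and the remaining calculi of the paper then follows as corollaries, being captured by the same construction applied to their respective classes of frames.
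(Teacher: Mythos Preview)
Your proposal is correct and follows the same approach as the paper, which simply states that decidability follows from the completeness theorem \ref{completeness_MBCL_CuDL} and the finite model property of theorem \ref{filtration_finite_model_property}. Your version spells out the decision procedure in considerably more detail than the paper's one-line proof, but the underlying argument---reduce theoremhood to validity via soundness and completeness, then bound the countermodel search using the filtration lemmas---is identical.
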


\begin{proof}
	It follows from the completeness theorem \ref{completeness_MBCL_CuDL} and the finite model property of theorem \ref{filtration_finite_model_property}.
\end{proof}

A simpler version of this proof can be adapted in order to prove the decidability of $\acMBCL$. The definition of filtration for a model $\M \in \cModBCL{\cRelMBCL}$ is the same as definition \ref{filtration_def}. Moreover for the filtration $\M^f_\Gamma$ it already holds that $\M^f_\Gamma \in \cModBCL{\cRelMBCL}$, therefore there is no need to build a model $\M^+_\Gamma$ and to prove a corresponding result of lemma \ref{filtration_equivalence_m+}. With this simplification, the above procedure can be repeated, obtaining the finite model property (this time in a strict sense, not like in theorem \ref{filtration_finite_model_property}), which together with completeness theorem \ref{completeness_basic_mbcl} gives decidability for $\acMBCL$. Moreover, since $\acMBCL$ is a conservative expansion of $\acBCL$, the completeness of the former implies the completeness of the latter, it is enough to work on the $\Box$-free fragment of $\acMBCL$, that is precisely $\acBCL$. Finally, $\acBCLGCUN$ and $\acMBCLGCUN$ are axiomatic extensions of decidable logics, therefore they inherit the decidability. To sum up these considerations:

\begin{wn}
	The logics $\acBCL, \acBCLGCUN, \acMBCL, \acMBCLGCUN$ are decidable.
\end{wn}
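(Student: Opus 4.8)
The plan is to derive decidability of each calculus from three facts: it is given by a finite family of axiom schemata (so its theorems are recursively enumerable), it is complete with respect to its intended class of models, and it enjoys the finite model property with an effective bound on the size of countermodels (so its non-theorems are recursively enumerable as well). Thus, modulo the completeness theorems already established, the only thing left to produce in each case is a finite model property, and for this I would reuse the filtration of Definition~\ref{filtration_def} together with the satisfaction Lemma~\ref{filtration_valuation} and the finiteness Lemma~\ref{filtration_finite_lemma}, whose proofs are insensitive to the particular class at hand.

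First I would dispatch $\acMBCL$. Filtering a model of $\cModBCL{\cRelMBCL}$ through the subformula closure $\Gamma$ of the target formula yields a finite model agreeing with the original on every formula of $\Gamma$. The key simplification with respect to the demodalization case is that no infinitary closure condition must be restored: the conditions $\scAi$, $\scAii$, $\scBo$ are negative and hence inherited by the subrelation $\R^f_{[\oPos]_\Gamma}\subseteq\R_\oPos$, while the positive conditions $\scBi$, $\scBii$ concern only pairs of the shape $\langle A\rightarrow B,\neg(A\rightarrow\neg B)\rangle$ and can be reinstated on formulas outside $\Gamma$ without disturbing the truth values of $\Gamma$-formulas. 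Hence $\M^f_\Gamma$ can be taken in $\cModBCL{\cRelMBCL}$ and no auxiliary model as in Definition~\ref{filtration_def_m+} is needed. This gives the finite model property in the strict sense, which with Theorem~\ref{completeness_basic_mbcl} yields decidability of $\acMBCL$. Decidability of $\acBCL$ then comes for free: since $\acMBCL$ conservatively expands $\acBCL$ and its $\Box$-free, $\Diamond$-free fragment is exactly $\acBCL$, the decision procedure for $\acMBCL$ restricted to modality-free inputs decides $\acBCL$.

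The only delicate point, and the reason why the slogan ``an axiomatic extension of a decidable logic is decidable'' is not by itself a proof, concerns $\acBCLGCUN$ and $\acMBCLGCUN$. A finite filtration cannot be closed under $\scCUNk$, for exactly the reason a finite relation cannot be closed under $\scDemL$. I would remove this obstruction by the same device used in the demodalization case: from $\M^f_\Gamma$ build a model whose relating relations saturate each $\R^f_{[\oPos]_\Gamma}$ under $\scCUNk$, in analogy with Definition~\ref{filtration_def_m+}, and then prove the analogue of Lemma~\ref{filtration_equivalence_m+}. The verification is the mirror image of Lemma~\ref{ClassLemmaBCL}: if a newly added pair $\langle\neg^m A,\neg^n B\rangle$ with both formulas in $\Gamma$ were spurious, then $\langle\neg^k A,\neg^l B\rangle\in\R^f_{[\oPos]_\Gamma}$ would force $\R_\oPos(\neg^k A,\neg^l B)$, whence $\scCUNk$-closure of the original model gives $\R_\oPos(\neg^m A,\neg^n B)$ and, since $\Gamma$ is closed under subformulas (so $\neg^k A,\neg^l B\in\Gamma$ whenever $\neg^m A,\neg^n B\in\Gamma$), already $\R^f_{[\oPos]_\Gamma}(\neg^m A,\neg^n B)$, a contradiction. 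The saturated model lies in the intended class and is equivalent to $\M^f_\Gamma$ on $\Gamma$, so the finite model property holds; combined with the completeness theorems for these two systems it yields their decidability. I expect this saturation-and-equivalence step to be the main obstacle, all other ingredients being routine transcriptions of results already proved.
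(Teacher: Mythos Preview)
Your treatment of $\acMBCL$ and $\acBCL$ coincides with the paper's: filtration yields the finite model property directly (the paper even asserts, somewhat optimistically, that $\M^f_\Gamma$ already lies in $\cModBCL{\cRelMBCL}$, whereas you are more careful in noting that the positive conditions $\scBi,\scBii$ must be reinstated outside $\Gamma$), and decidability of $\acBCL$ is then read off from conservativity.

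Where your proposal diverges is in the GCUN cases, and here you are \emph{more} rigorous than the paper. The paper's entire argument for $\acBCLGCUN$ and $\acMBCLGCUN$ is the sentence ``[they] are axiomatic extensions of decidable logics, therefore they inherit the decidability.'' You rightly flag this slogan as a non-proof: decidability is not in general preserved under axiomatic extension. Your remedy---saturating $\R^f_{[\oPos]_\Gamma}$ under $\scCUN$ and $\scCUNk$ and proving an analogue of Lemma~\ref{filtration_equivalence_m+}---is the correct fix and mirrors the paper's own $\M^+$ construction for demodalization. One streamlining: rather than arguing case-by-case that newly added $\Gamma$-pairs were already present, observe that the closure of $\R^f_{[\oPos]_\Gamma}$ under $\scCUN$, $\scCUNk$, $\scBi$, $\scBii$ is contained in $\R_\oPos$ (since $\R_\oPos$ already satisfies all these), whence the negative conditions $\scAi,\scAii,\scBo$ are inherited and the intersection with $\Gamma\times\Gamma$ is exactly $\R^f_{[\oPos]_\Gamma}$. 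This handles the iterated saturation in one stroke and also covers the interaction between the positive connexive conditions and the closure rules, which your sketch leaves implicit.
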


\section{Further work: a syncategorematic approch to negation and modalities}
\label{sec:6}

The results presented in this work are meant as a preliminary study in the direction of the more general picture of relating logics closed under arbitrary negations and demodalization. Within the case study of connexivity, we have proved that it is possible to strengthen the logics $\BCL$ and $\MBCL$ in order to obtain closures under special cases of iterated negations and under certain applications of the demodalization function. The decision to work in the setting of connexive logic was motivated by the fact that it was precisely during its study in \cite{JarmuzekMalinowski2019} that it emerged the peculiar connection between the relating relation and the behaviour of iterated negation.

As we have claimed in the current paper, there is much work to be done. The two kinds of closures studied here have a deeper, non-technical motivation, and that is syncategorematicity. Closure under multiple negations expresses, up to a certain iteration of the negation connective, that this connective does not alter part of the intensional component of a sentence. Here we presented only particular cases of such closure, but it is an ongoing project the attempt to formulate a general theory of closure under multiple negations, either within Boolean Connexive Logics and, to the highest degree of generality, for relating semantics.

Similarly, the possibility to preserve part of the intensional content of a sentence through the process of demodalization (or modalization), is a way to clarify how the relation between formulas expands from (or to) their modal counterparts. This is an application of the syncategorematic approach to modality. While we considered left demodalization $\scDemL$, there are still other directions of demodalization to be pursued. After that, partial demodalization is the next step: what if the demodalization function operates only up to or from a certain complexity? As shown, the effect of demodalization has important effects on frame conditions, which are a key element in the study of modal logics. Different forms of demodalization may allow to find a more precise connection with said conditions. The attempt of our paper was to lay an initial framework for exploring these questions.

\paragraph{Acknowledgements.} The authors would like to thank the anonymous reviewers for their comments and questions. Most of the comments and questions were relevant and allowed us to improve our article.



\AuthorAdressEmail{Tomasz Jarmu\.zek}{Department of Logic\\
	Nicolaus Copernicus University in Toru\'n\\
	Moniuszki 16\\
	63-300 Toru\'n, Poland}{Tomasz.Jarmuzek@umk.pl}

\AdditionalAuthorAddressEmail{Jacek Malinowski}{Institute of Philosophy and Sociology\\Polish Academy of Sciences\\
	Nowy Świat 72 \\
	00-330  Warsaw, Poland}{jacek.malinowski@StudiaLogica.org}

\AdditionalAuthorAddressEmail{Aleksander Parol}{Department of Logic\\
	Nicolaus Copernicus University in Toru\'n\\
	Moniuszki 16\\
	63-300 Toru\'n, Poland}{aleksander.parol@doktorant.umk.pl}

\AdditionalAuthorAddressEmail{Nicol\`o Zamperlin}{Department of Pedagogy, Psychology and Philosophy\\
	University of Cagliari\\
	Cagliari, Italy}
{n.zamperlin@gmail.com}

\end{document}